\newcommand{\e}{\mathbbm{1}}
\newcommand{\limto}{{\displaystyle\lim_{\longrightarrow}}}
\newcommand{\rightlim}{\mathop{\limto}}
\newcommand{\leftlim}{\mathop{\displaystyle\lim_{\longleftarrow}}}
\newcommand{\limfromn}{\leftlim\limits_{\raise3pt\hbox{$n$}}}
\newcommand{\limton}{\rightlim\limits_{\raise3pt\hbox{$n$}}}
\newcommand{\rightlimit}[1]{\mathop{\lim\limits_{\longrightarrow}}\limits%
                   _{\raise3pt\hbox{$\scriptstyle #1$}}}
\newcommand{\leftlimit}[1]{\mathop{\lim\limits_{\longleftarrow}}\limits%
                   _{\raise3pt\hbox{$\scriptstyle #1$}}}
\numberwithin{equation}{section}
\newcommand{\rar}[1]{\stackrel{#1}{\longrightarrow}}
\newcommand{\lar}[1]{\stackrel{#1}{\longleftarrow}}
\newcommand{\xrar}[1]{\xrightarrow{#1}}
\newcommand{\iso}{\buildrel{\sim}\over{\longrightarrow}}
\newcommand{\into}{\hookrightarrow}
\newcommand{\al}{\alpha}
\newcommand{\be}{\beta}
\newcommand{\ga}{\gamma}
\newcommand{\Ga}{\Gamma}
\newcommand{\de}{\delta}
\newcommand{\eps}{\epsilon}
\newcommand{\te}{\theta}
\newcommand{\vp}{\varphi}
\newcommand{\vt}{\vartheta}
\newcommand{\bA}{{\mathbb A}}
\newcommand{\bD}{{\mathbb D}}
\newcommand{\bQ}{{\mathbb Q}}
\newcommand{\bR}{{\mathbb R}}
\newcommand{\bZ}{{\mathbb Z}}
\newcommand{\cA}{{\mathcal A}}
\newcommand{\cC}{{\mathcal C}}
\newcommand{\cD}{{\mathcal D}}
\newcommand{\cE}{{\mathcal E}}
\newcommand{\cM}{{\mathcal M}}
\newcommand{\cN}{{\mathcal N}}
\newcommand{\cS}{{\mathcal S}}
\newcommand{\cY}{{\mathcal Y}}
\newcommand{\sD}{{\mathscr D}}
\newcommand{\sL}{{\mathscr L}}
\newcommand{\fA}{{\mathfrak A}}
\newcommand{\fM}{{\mathfrak M}}
\newcommand{\Isom}{\operatorname{Isom}}
\newcommand{\End}{\operatorname{End}}
\newcommand{\Hom}{\operatorname{Hom}}
\newcommand{\Aut}{\operatorname{Aut}}
\newcommand{\Spec}{\operatorname{Spec}}
\newcommand{\id}{\operatorname{id}}
\newcommand{\Id}{\operatorname{Id}}
\newcommand{\Ad}{\operatorname{Ad}}
\newcommand{\HOM}{\underline{\operatorname{Hom}}}
\newcommand{\tens}{\otimes}
\newcommand{\st}{\,\big\vert\,}
\newcommand{\sbr}{\smallbreak}
\newcommand{\mbr}{\medbreak}
\newcommand{\bbr}{\bigbreak}
\newtheorem{thm}{Theorem}[section]
\newtheorem{cor}[thm]{Corollary}
\newtheorem{lem}[thm]{Lemma}
\newtheorem{prop}[thm]{Proposition}
\theoremstyle{remark}
\newtheorem{rem}[thm]{Remark}
\newtheorem{rems}[thm]{Remarks}
\newtheorem{example}[thm]{Example}
\newtheorem{defin}[thm]{Definition}
\newcommand{\ql}{\overline{\bQ}_\ell}
\newcommand{\sets}{\cS{}ets}
\newcommand{\GV}{Gro\-then\-dieck-Ver\-dier\ }
\title[A duality formalism in the spirit of Grothendieck and Verdier]{A duality formalism in the spirit of \\ Grothendieck and Verdier}
\author{Mitya Boyarchenko \and Vladimir Drinfeld}
\thanks{M.B. (corresponding author) was supported by the NSF grant DMS-1001769. \\ V.D. was supported by the NSF grant DMS-1001660. \\
\textit{Addresses}: M.B.: Department of Mathematics, University
of Michigan, Ann Arbor, MI 48109. \\ V.D.: Department of
Mathematics, University of Chicago, Chicago, IL 60637. \\
\textit{E-mail}: {\tt mitya@math.uchicago.edu} (M.B.), \ {\tt
drinfeld@math.uchicago.edu} (V.D.)}
\begin{document}

\begin{abstract}
We study monoidal categories that enjoy a certain weakening of the rigidity property, namely,
the existence of a dualizing object in the sense of Grothendieck and Verdier. We call them
Grothendieck-Verdier categories. (They have also been studied in the literature under the name ``$*$-autonomous categories.'')
Notable examples include the derived category of constructible sheaves 
on a scheme
(with respect to tensor product) as well as the derived and equivariant derived categories of
constructible sheaves on an algebraic group (with respect to convolution).

We show that the notions of pivotal category and ribbon category, which are well known in the setting of rigid monoidal categories, as well as certain standard results associated with these notions, have natural analogues in the world of \GV categories.
\end{abstract}

\keywords{Monoidal category, braided category, pivotal structure, ribbon category, $*$-autonomous category, Grothendieck-Verdier category}

\subjclass[2010]{18D10}

\maketitle

\setcounter{tocdepth}{1}

\tableofcontents


\section*{Introduction}

\subsection{Main definitions}
\begin{defin}\label{def:dualizing}
An object $K$ in a  monoidal category $\cM$ is said to be \emph{dualizing}  if
for every $Y\in\cM$ the functor $X\mapsto\Hom (X\otimes Y,K )$ is
representable by some object $DY\in\cM$ and the contravariant
functor $D:\cM\rar{} \cM$ is an antiequivalence. $D$ is called the
\emph{duality functor} with respect to $K$.
\end{defin}

\begin{rem}
By Proposition \ref{p:dualizings} below,
if a dualizing object exists then it is unique up to tensoring by an
invertible object.
\end{rem}

\begin{defin}\label{def:GV}
A \emph{Grothendieck-Verdier category} is a pair $(\cM,K)$, where $\cM$ is a monoidal category
and $K\in\cM$ is a dualizing object.
\end{defin}

Some examples of \GV categories are given in \S\ref{ss:first_examples} below.

\begin{rems}    \label{r:abuses}
\begin{enumerate}[(1)]
\item If $(\cM,K)$ is a \GV category then $D:\cM\rar{}\cM$ will always denote
the corresponding duality functor.
 \sbr
\item By an abuse of language we will sometimes say ``Grothendieck-Verdier category $\cM$'' instead of
``Grothendieck-Verdier category $(\cM,K)$''.
\end{enumerate}
\end{rems}

\begin{defin}\label{def:r-category}
A monoidal category $\cM$ is said to be an {\em r-category\,} if
the unit object $\e\in\cM$ is dualizing.
\end{defin}

So any r-category can be considered as a Grothendieck-Verdier category with $K=\e$.
The letter `r' in the name ``r-category" is related to the words ``rigid'' and ``regular'', see
Examples~\ref{example:dual-gen1}--\ref{example:dual-gen2} below.

\subsection{Main examples}   \label{ss:first_examples}
Below we give some examples of Grothendieck-Verdier categories. More examples of
such categories can be found in \S\ref{s:examples} and in the works by M.~Barr, who studied them under the name of {\em $\ast$-autonomous categories\,}  (e.g., see  \cite{Barr79,Barr95,Barr96,Barr99}).

\begin{example}\label{example:GVoriginal}
Let $\cM=(\sD (X),\otimes)$, where $X$ is a scheme of finite type over a field $k$
and $\sD(X)=D^b_c(X,\ql)$ is the bounded derived category of constructible $\ell$-adic
sheaves on $X$, defined as in \cite{Jannsen,Ekedahl}.
Let $K_X\in\sD (X)$ be the dualizing complex. Then $(\cM ,K_X)$ is a
Grothendieck-Verdier category. In this case $D$ is the usual Verdier duality
functor $ \bD_X$.
\end{example}

\begin{example}\label{example:Kashiwara-Schapira}
In \cite{KS} M.~Kashiwara and P.~Schapira introduce three variants of the
category $\cM$ from Example~\ref{example:GVoriginal}
in which sheaves are considered with respect to a usual topology
(rather than a Grothendieck topology). More precisely, let $X$ be either a
locally finite simplicial complex or a real-analytic manifold, or a complex-analytic one.
In each of these situations they introduce in  \cite[ch.~8]{KS} a notion of constructibility for sheaves of
abelian groups on $X$ so that the bounded derived category of constructible sheaves becomes
a Grothendieck-Verdier category.
\end{example}

\begin{example} \label{example:dual-gen1}
Any rigid monoidal category\footnote{The definition of rigidity is recalled in \S\ref{ss:rigidity}, see
Definition~\ref{d:left-dual}.} is an r-category. The next example (or the elementary
Example~\ref{example:earlyGrothendieck}) shows that the converse is false.
\end{example}

\begin{example}   \label{example:dual-gen2}
Let $X$ be a smooth scheme of pure dimension $d$ over a field $k$. Then the monoidal category
$(\sD (X),\otimes )$ is an r-category and $D: \sD (X)\to\sD (X)$ is the functor
$N\longmapsto( \bD_X  N)[-2d](-d)$. If $d>0$, then $(\sD (X),\otimes )$ is \emph{not rigid} because
$D(M_1\tens M_2)\not\cong D(M_2)\tens D(M_1)$ for some $M_1,M_2\in\sD (X)$.
For instance, let $M_1=M_2=i_*\ql$, where $i:\Spec k\into X$ is a point. Then
$D(M_1\tens M_2)= D(i_*\ql)=i_*\ql [-2d](-d)$, while $D(M_2)\tens D(M_1)=i_*\ql [-4d](-2d)$.
\end{example}

\begin{example}\label{example:dual-gen4}
Let $G$ be a group scheme of finite type over a field $k$. We define the equivariant derived category of $G$ as $\sD_G(G):=D^b_c\bigl((\Ad G)\backslash G,\ql\bigr)$ (i.e., $\sD_G(G)$ is the bounded derived category of the quotient stack for the conjugation action of $G$ on itself \cite{Las-Ols06}).
The monoidal categories $\sD (G)$ and $\sD_G(G)$ equipped with the functor of convolution with compact
support are r-categories with $D$ being the functor $\bD_G^-=\bD_G\circ\iota^*=\iota^*\circ\bD_G$, where
$\bD_G$ is the Verdier duality functor on $G$ and $\iota:G\rar{}G$ is given by $g\mapsto g^{-1}$.
The proof is straightforward and easy, see \cite[Lemma A.10]{foundations}.
The monoidal category $\sD_G(G)$ has a canonical braided structure, see \cite[Definition A.43]{foundations}.
\end{example}

\subsection{Subject of this work}
Our goal is to establish some general facts about \GV categories, which are well known in the
case of symmetric \GV categories or in the case of arbitrary rigid monoidal categories.
The proofs are not always straightforward generalizations of existing ones.

\mbr

For instance, if $\cM$ is a rigid monoidal category then the well known monoidal structure on the functor
$D^2:\cM\rar{}\cM$ is usually defined via the canonical isomorphism
$D(X\tens Y)\rar{\simeq} DY\tens DX$. In an arbitrary \GV category (or even an arbitrary r-category)
$D(X\tens Y)$ is, in general, not isomorphic to $DY\tens DX$ (see Example~\ref{example:dual-gen2}).
Nevertheless, the functor $D^2$ has a canonical monoidal structure, see~\S\ref{s:D^2monoidal}.

\mbr

Here is another example.
It is well known that the set of twists\footnote{The notion of twist is recalled in \S\ref{ss:JS}, see
Definition~\ref{d:JS} and Remark~\ref{r:twists}(i).} on a rigid braided category $\cM$
is equipped with a canonical involution: namely, if $\theta\in\Aut\Id_{\cM}$ is a twist then the automorphism $\theta'\in\Aut\Id_{\cM}$ defined by $\te'_X=D^{-1}(\te_{DX})$ is also a twist.
(The fixed points of this involution are called \emph{ribbon structures}.) For arbitrary r-categories\footnote{For \GV categories the statement has to be slightly modified, see
Proposition~\ref{p:involution}.} it is still true that $\theta'$ is a twist (see Proposition~\ref{p:involution}
and Remark~\ref{r:involution}), but the proof has to be modified.

\subsection{An $\infty$-categorical perspective (after J.~Lurie)}  \label{ss:Lurie}
This subsection is informal. We hope that somebody will develop these ideas rigorously and systematically.

\subsubsection{}
There is a general notion of $\cE_n$-category, i.e., an $(\infty ,1)$-category\footnote{A
$(\infty ,1)$-category is an $\infty$-category in which all $m$-morphisms are invertible for $m>1$.}
with an action of the little $n$-disk operad $\cE_n$. If $n=1$ and $n=2$ one gets, respectively, the notions of monoidal and braided $(\infty ,1)$-category.

An object of a monoidal $(\infty ,1)$-category is said to be \emph{dualizing} if it is dualizing in its
homotopy category (which is a usual monoidal category). Thus one has a notion of \GV
$(\infty ,1)$-category. Since $\cE_1\subset\cE_n$ one has a notion of \GV
$\cE_n$-category for each $n\ge 1$.

\begin{example}
The \GV categories from
Example~\ref{example:GVoriginal} and Examples \ref{example:dual-gen2}-\ref{example:dual-gen4} have natural $(\infty ,1)$-categorical ``refinements". In particular, the ``refinement" of
the category $\sD (G)$ from Example~\ref{example:dual-gen4} is an $\cE_1$-category and
the ``refinement" of $\sD_G(G)$ is an $\cE_2$-category.
\end{example}

\subsubsection{}
As explained to us by J.~Lurie, he expects (or at least, he does not exclude) that the results of
\S\ref{s:D^2monoidal}-\ref{s:ribbon}
can be generalized to this setting and interpreted in terms of a certain canonical action of the topological group\footnote{An $\infty$-groupoid is essentially the same as a topological space, so it can be acted upon by a topological group.} $O(n+1)$ on the $\infty$-groupoid of \GV $\cE_n$-categories, whose restriction to
$O(n)\subset O(n+1)$ comes from the obvious action of $O(n)$ on the operad $\cE_n$.
This would be very interesting.
In Example 4.4.14 of \cite{Lurie} Lurie sketches a construction of the $O(n+1)$-action on the space of
\emph{rigid}  $\cE_n$-categories.

\mbr

Most of the results of our \S\S\ref{s:D^2monoidal}-\ref{s:ribbon}
(and their well known prototypes in the rigid case)
can be interpreted from this perspective. For instance, the fact that any \GV category $\cM$ has a canonical
auto-equivalence (namely, $D^2:\cM\iso\cM$) is related to the canonical generator of
$\pi_1 (O(2))$, and the fact that for any braided \GV category $\cM$ one has a canonical monoidal isomorphism $D^4\rar{\simeq}\Id_{\cM}$ (see \S\ref{sss:D^4})
is related to the equality $\pi_1 (O(3))=\bZ/2\bZ$.

\subsubsection{}
A related idea is to regard an $\cE_n$-category $\cM$ as a fiber of a certain \emph{local system}
of $(\infty ,1)$-categories, $\fM$, over the sphere $S^n$. To define $\fM$, note that an
$\cE_n$-category $\cM$ has not a single tensor product but rather a family of tensor products\footnote{In the familiar case $n=1$ a monoidal category has two tensor products: the ``original" one and the opposite one. In the \GV case the corresponding duality functors are $D$ and
$D^{-1}$.} $\tens_{\omega}$ parameterized by $\omega\in S^{n-1}$. Accordingly, in a \GV
$\cE_n$-category  $\cM$ one has not a single duality functor $D$ but rather
a family of equivalences $D_{\omega}:\cM^{\circ}\iso\cM$,
$\omega\in S^{n-1}$ (here $\cM^{\circ}$ is the dual $(\infty ,1)$-category). To construct $\fM$, represent $S^n$ as the union of hemispheres $S^n_{\pm}$, consider the constant sheaf on $S^n_+$ (resp. $S^n_-$) with fiber $\cM$ (resp. $\cM^{\circ}$) and glue them together using
$D_{\omega}$,  $\omega\in S^{n-1}=S^n_+\cap S^n_-\,$. Note that in general, $\fM$ is \emph{not} a local system of $\cE_n$-categories; in other words, the action of the loop space $\Omega S^n$ on $\cM$ defined by $\fM$ does \emph{not} preserve the $\cE_n$-structure on $\cM$.
E.g., if  $\cM$ is a braided \GV category then the image in $\Aut (\Id_{\cM})$ of the generator of
$\pi_1(\Omega S^2)=\pi_2(S^2)$ equals the automorphism $C_{\cM}\in\Aut (\Id_{\cM})$ from \S\ref{ss:JS}, which is a \emph{double-twist} in the sense of Definition~\ref{d:twists} and Remark~\ref{r:twists}(i) rather than a monoidal automorphism.

\subsection{Structure of the article}
We already defined the main objects of our study, \GV categories and r-categories, and remarked that every rigid monoidal category is an r-category. We begin the article by giving some basic properties of \GV categories in \S\ref{s:first-properties} and some further examples of (non-rigid) \GV categories and r-categories in \S\ref{s:examples}. In \S\ref{s:rigidity} we characterize rigid monoidal categories as r-categories satisfying a certain additional property.

\mbr

We devote \S\S\ref{s:D^2monoidal}--\ref{s:ribbon} to generalizations of certain well-known results and constructions involving rigid monoidal categories to the setting of \GV categories. In particular,
in \S\ref{s:D^2monoidal} we define a canonical monoidal structure on the square of the duality functor for an arbitrary \GV category. In \S\ref{s:pivotal} we define and study pivotal structures on \GV categories. In \S\ref{s:braidedGV} we study braided \GV categories. In particular, we prove that for any such category the square of the duality functor is 
braided and its fourth power is canonically isomorphic to the identity functor.
In \S\ref{s:pivotal-braided} we analyze the relation between pivotal structures and twists on a braided \GV category. This leads us to introducing in \S\ref{s:ribbon} the notion of a ribbon \GV category
(which specializes to the usual notion in the rigid case).

\mbr

We end the first part of the article by answering in \S\ref{s:relation} the question of which \GV categories can be realized as Hecke subcategories of r-categories.

\mbr

The second part of the article (\S\S\ref{s:proof-rigidity}--\ref{s:GV_to_r}) is devoted to the proofs that are too long and/or too technical to be included 
into the first part (namely, the proofs of Propositions \ref{p:Denis}, \ref{p:rigidity}, \ref{p:DD'}, \ref{p:2pivotal}, \ref{p:monoidal-iso-id-DD} and \ref{p:inverse-construction}, as well as Lemma \ref{l:braided-GV-category-phi-pm}).

\subsection{Acknowledgments}
We thank J.~Ayoub, A.~Beilinson, P.~Etingof, D.~Gaitsgory, E.~Jenkins, and especially J.~Lurie for useful discussions and advice. We also thank the referee for helpful suggestions and for informing us about the articles \cite{day-street} and \cite{egger-mccurdy}.

\vfill\newpage

\part{Formulations and easy proofs}

\section{First properties of Grothendieck-Verdier categories}\label{s:first-properties}

\subsection{Some canonical isomorphisms}

\begin{rems}\label{r:r}
\begin{enumerate}[(i)]
\item By definition, in any  Grothendieck-Verdier category $\cM$ one has an isomorphism
\begin{equation}  \label{e:duality1}
\Hom (X\otimes Y, K )\rar{\simeq}\Hom (X,DY)
\end{equation}
functorial in $X,Y\in\cM$. Since $D$ is an antiequivalence the right-hand side
of \eqref{e:duality1} identifies with $\Hom (Y,D^{-1}X)$. So one
also has an isomorphism
\begin{equation}  \label{e:duality2}
\Hom (X\otimes Y, K )\rar{\simeq}\Hom (Y,D^{-1}X)
\end{equation}
functorial in $X,Y\in\cM$. Thus a Grothendieck-Verdier category equipped with the opposite
tensor product is still a Grothendieck-Verdier category, {\em but $D$ gets replaced by $D^{-1}$.}
 \mbr
\item
By \eqref{e:duality2}, in any Grothendieck-Verdier category $\cM$ one has a functorial isomorphism
$\Hom (D^2Y\otimes X, K )\rar{\simeq}\Hom (X,DY)$.
Combining it with \eqref{e:duality1} one gets a functorial isomorphism
\begin{equation}  \label{e:duality3}
g:\Hom (X\otimes Y, K )\rar{\simeq}\Hom (D^2Y\otimes X, K ), \quad X,Y\in\cM .
\end{equation}
Equivalently, $g$ is characterized by the commutativity of the diagram
\begin{equation}   \label{e:duality4}
\xymatrix{
\Hom (X\otimes Y, K )  \ar[d] \ar[rr]^{g} & &
\Hom (D^2Y\otimes X, K )\ar[d]\\
 \Hom (X,DY) \ar[rr]^{D} & &
 \Hom (D^2Y,DX)}
\end{equation}
whose vertical arrows come from \eqref{e:duality1}.
 \mbr
\item
In any Grothendieck-Verdier category there exist right and left\footnote{In this article we do
not have to decide which of the two internal $\Hom$'s
defined by \eqref{e:internal_Hom2b} and \eqref{e:internal_Hom1b} should be called ``left.''
We prefer the convention that $\HOM$ is the right internal $\Hom$ and $\HOM '$ is the left one.
Reason: by Proposition~\ref{p:criterion-for-duality}, the right rigid dual of $X$ (if it exists) equals
$\HOM (X,\e )$. Note that the functor of \emph{left} multiplication by $X$ is adjoint to the functor
$\HOM (X,?)$, which we would like to call the  \emph{right} internal $\Hom$.
We think this is acceptable.} internal $\Hom$'s. More precisely, if one sets
\begin{equation}   \label{e:internal_Hom2a}
\HOM(X,Z)=D^{-1}(DZ\tens X)
\end{equation}
and
\begin{equation}  \label{e:internal_Hom1a}
\HOM'(Y,Z)=D(Y\tens D^{-1}Z),
\end{equation}
then \eqref{e:duality1} and \eqref{e:duality2} yield
functorial isomorphisms
\begin{equation}   \label{e:internal_Hom2b}
\Hom (X\otimes Y,Z)\rar{\simeq}\Hom (Y, \HOM(X,Z))
\end{equation}
and
\begin{equation}  \label{e:internal_Hom1b}
\Hom (X\otimes Y,Z)\rar{\simeq}\Hom (X, \HOM'(Y,Z)).
\end{equation}
 \mbr
\item From \eqref{e:duality1} and  \eqref{e:duality2} one gets canonical isomorphisms
\begin{equation}   \label{e:D1}
D\e\rar{\simeq} K, \quad\quad D^{-1}\e\rar{\simeq} K.
\end{equation}
and therefore canonical isomorphisms
\begin{equation}  \label{e:D^2(1)}
\e\rar{\simeq} D^2\e ,
\end{equation}
\begin{equation}  \label{e:D^2K}
K\rar{\simeq} D^2K,
\end{equation}
where \eqref{e:D^2K} is the composition $K\rar{\simeq} D\e\rar{\simeq} D^2D^{-1}\e\rar{\simeq} D^2K$.
 \mbr
\item The inverse of \eqref{e:D^2K} equals the image of
$\id_K\in\Hom(\e\tens K,K)$ under the isomorphism $\Hom(\e\tens K,K)\rar{\simeq}\Hom(D^2K\tens\e,K)$
coming from \eqref{e:duality3}. It is easy to check this using diagram \eqref{e:duality4} for $X=\e$ and $Y=K$.
\end{enumerate}
\end{rems}

\subsection{Uniqueness of dualizing objects} Let us recall the following

\begin{defin}
If $\cM$ is a monoidal category, an object $X\in\cM$ is said to be \emph{invertible} if there exists an object $Y\in\cM$ such that $X\tens Y\cong\e\cong Y\tens X$.
\end{defin}

\begin{prop}    \label{p:dualizings}
Let $(\cM ,K)$ be a Grothendieck-Verdier category.
\begin{enumerate}[$($i$)$]
\item The functor $\sL\mapsto D\sL=K\tens\sL^{-1}$ is an antiequivalence between the full subcategory
of invertible objects $\sL\in\cM$ and the full subcategory of dualizing objects.
 \sbr
\item The same is true for the functor $\sL\mapsto D^{-1}\sL=\sL^{-1}\tens K$.
 \sbr
\item If $\sL\in\cM$ is invertible then so is $D^2\sL$ and one has a canonical isomorphism
 $K\tens\sL^{-1}\rar{\simeq} (D^2\sL)^{-1}\tens K$.
\end{enumerate}
\end{prop}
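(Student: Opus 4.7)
The plan is to reduce part (iii) to parts (i) and (ii), deduce (ii) from (i) applied to the opposite monoidal category (via Remark~\ref{r:r}(i)), and concentrate the real work on (i). The key computation used throughout is that for \emph{any} $\sL\in\cM$, iterating \eqref{e:duality1} gives
\begin{equation*}
\Hom(X\tens Y,D\sL)\cong\Hom(X\tens Y\tens\sL,K)\cong\Hom(X,D(Y\tens\sL)),
\end{equation*}
so whenever $D\sL$ happens to be dualizing, its duality functor equals $Y\mapsto D(Y\tens\sL)$.

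For the easy direction of (i), assume $\sL$ is invertible. Then $-\tens\sL^{-1}$ is a two-sided inverse to $-\tens\sL$, which yields $\Hom(X\tens\sL,K)\cong\Hom(X,K\tens\sL^{-1})$ and hence $D\sL\cong K\tens\sL^{-1}$. Moreover the functor $Y\mapsto D(Y\tens\sL)$ is an antiequivalence (composition of the antiequivalence $D$ with the equivalence $-\tens\sL$), so by the key computation $D\sL$ is dualizing.

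Conversely, given a dualizing $K'$, set $\sL:=D^{-1}K'$, so that $D\sL=K'$. The key computation identifies the duality functor for $K'$ with $Y\mapsto D(Y\tens\sL)$; since this is an antiequivalence and $D$ is one, $T:=(-\tens\sL)$ must be an equivalence of categories. The main obstacle is now to pass from ``$T$ is an equivalence'' to ``$\sL$ is invertible.'' I plan to argue as follows: fix a quasi-inverse $U$ of $T$ and set $\sL^{-1}:=U(\e)$. Applying $T\circ U\cong\id$ at $\e$ yields $\sL^{-1}\tens\sL\cong\e$, and then
\begin{equation*}
T(\sL\tens\sL^{-1})=\sL\tens\sL^{-1}\tens\sL\cong\sL\tens\e\cong\sL\cong T(\e);
\end{equation*}
since $T$, being an equivalence, is fully faithful and hence reflects isomorphisms, this forces $\sL\tens\sL^{-1}\cong\e$. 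Full faithfulness of the contravariant functor $\sL\mapsto D\sL$ on the relevant subcategories is inherited from the antiequivalence property of $D$ on all of $\cM$; combined with the two directions just established, this completes (i).

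Part (ii) is obtained by applying (i) to $\cM$ equipped with the reversed tensor product, which by Remark~\ref{r:r}(i) is again a Grothendieck-Verdier category with the same dualizing object $K$ but with duality functor $D^{-1}$; the formula $D^{-1}\sL\cong\sL^{-1}\tens K$ drops out. For (iii), if $\sL$ is invertible then $D\sL$ is dualizing by (i), so by (ii) there is an invertible $\sL'$ with $D\sL\cong D^{-1}\sL'$; applying $D$ gives $\sL'\cong D^2\sL$, so $D^2\sL$ is invertible, and the chain
\begin{equation*}
K\tens\sL^{-1}\cong D\sL\cong D^{-1}(D^2\sL)\cong(D^2\sL)^{-1}\tens K
\end{equation*}
produces the canonical isomorphism.
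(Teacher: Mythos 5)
Your proof is correct, and its overall architecture matches the paper's (prove (i), get (ii) by some symmetry, deduce (iii) by comparing the two descriptions of $D\sL$). Two genuine differences are worth noting. First, where the paper proves (i) and (ii) independently via the two internal Homs of Remark~\ref{r:r}(iii) (using $\HOM'$ for (i), $\HOM$ for (ii)), you instead derive (ii) from (i) by passing to the opposite tensor product and invoking Remark~\ref{r:r}(i). This is fine, but it implicitly uses that an object is dualizing for $\tens$ if and only if it is dualizing for $\tens^{op}$; that equivalence is indeed immediate from the same computation as Remark~\ref{r:r}(i) applied to an arbitrary dualizing object rather than $K$, and you could flag it in a half-sentence. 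Second, where the paper silently uses the standard fact that $Y\mapsto Y\tens\sL$ being an equivalence is equivalent to $\sL$ being invertible, you spell out the nontrivial direction (extract $\sL^{-1}:=U(\e)$ from a quasi-inverse $U$, check both one-sided inverses); that is a worthwhile addition. One small wording issue there: you justify $\sL\tens\sL^{-1}\cong\e$ by saying $T$ ``reflects isomorphisms,'' but what you actually need is that a fully faithful functor is injective on isomorphism classes of objects (if $T(A)\cong T(B)$ then $A\cong B$); this is also a consequence of full faithfulness, just a different lemma than ``$T(f)$ iso $\Rightarrow$ $f$ iso.'' The conclusion is correct either way.
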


\begin{proof}
By \eqref{e:internal_Hom1a} and \eqref{e:internal_Hom1b},
an object $Z\in\cM$ is dualizing if and only if the functor
$Y\mapsto Y\tens D^{-1}Z$ is an equivalence. This means that
$Z=D\sL$, where $\sL$ is invertible. In this case $D\sL=K\tens\sL^{-1}$ by \eqref{e:duality1}.
We have proved (i). To prove (ii), use \eqref{e:internal_Hom2a}, \eqref{e:internal_Hom2b},
and \eqref{e:duality2} instead of
\eqref{e:internal_Hom1a}, \eqref{e:internal_Hom1b}, and \eqref{e:duality1}.

\mbr

By (ii), $K\tens\sL^{-1}=D\sL$ can also be written as $\widetilde{\sL}^{-1}\tens K=D^{-1}\widetilde{\sL}$
for some invertible $\widetilde{\sL}\in\cM$. Since $D^{-1}\widetilde{\sL}=D\sL$ we have
$\widetilde{\sL}=D^2\sL$.
\end{proof}

\begin{rem}     \label{r:commutationrule}
Proposition~\ref{p:DD'} below yields a canonical isomorphism
\begin{equation}  \label{e:D^2inverse}
(D^2\sL)^{-1}\rar{\simeq} D^2(\sL ^{-1}).
\end{equation}
\end{rem}

\subsection{Invertibility and rigidity of $K$}    \label{sss:rigidity_of_K}
We learned the following statement from Dennis Gaitsgory.
He also explained to us how it can be applied to studying the derived categories
of $\cD$-modules on certain algebraic stacks.

\begin{prop}   \label{p:Denis}
A dualizing object of a monoidal category is invertible if and only if it is rigid in the sense of Definition~\ref{d:left-dual}.
\end{prop}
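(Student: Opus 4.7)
For $(\Leftarrow)$, if $K$ is invertible with inverse $K^{-1}$, then $K^{-1}$ serves as both a left and a right dual of $K$, with evaluations and coevaluations supplied by the isomorphisms $K^{-1}\tens K \cong \e \cong K \tens K^{-1}$; the triangle identities reduce to monoidal coherence. This direction is a formality.

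For $(\Rightarrow)$, assume $K$ is rigid, and let $R$ (resp.\ $L$) be a right (resp.\ left) dual of $K$. The defining adjunctions are $K \tens - \dashv R \tens -$ and $- \tens K \dashv - \tens L$. But the dualizing structure provides two \emph{other} right adjoints of the same two functors: by \eqref{e:internal_Hom2a}--\eqref{e:internal_Hom2b}, $K \tens -$ has right adjoint $\HOM(K,-) = D^{-1}(D(-) \tens K)$, and by \eqref{e:internal_Hom1a}--\eqref{e:internal_Hom1b}, $- \tens K$ has right adjoint $\HOM'(K,-) = D(K \tens D^{-1}(-))$. Uniqueness of right adjoints yields canonical natural isomorphisms
\[
R \tens B \;\cong\; D^{-1}(DB \tens K), \qquad B \tens L \;\cong\; D(K \tens D^{-1}B), \qquad B \in \cM.
\]
Specializing both at $B = K$ and plugging in the canonical isomorphisms $DK \cong \e$ (a consequence of $D\e \cong K$ from \eqref{e:D1} together with $D^2\e \cong \e$ from \eqref{e:D^2(1)}) and $D^{-1}K \cong \e$ (since $K = D\e$), the right-hand sides collapse to $D^{-1}K \cong \e$ and $DK \cong \e$ respectively, so $R \tens K \cong \e$ and $K \tens L \cong \e$. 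Thus $R$ is a left inverse and $L$ a right inverse of $K$; the standard calculation $L \cong (R \tens K) \tens L \cong R \tens (K \tens L) \cong R$ forces $L \cong R$, and $K$ is invertible.

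The main care point is convention-tracking for left versus right duals: one must verify that a right dual $R$ of $K$ really yields the adjunction $K \tens - \dashv R \tens -$ (matching the form of $\HOM$), and that a left dual $L$ yields $- \tens K \dashv - \tens L$ (matching $\HOM'$). Once that is straight, the proof reduces to a two-step Yoneda computation plus the canonical identifications already recorded in Remark~\ref{r:r}; there is no deeper obstacle.
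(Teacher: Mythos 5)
Your proof is correct and is in substance the same as the paper's. The paper packages the two key observations as Proposition~\ref{p:criterion-for-duality}(iii) (which records that a right rigid dual ${}^*\!K$ satisfies ${}^*\!K\tens K\cong\HOM(K,K)$) and Lemma~\ref{l:HomKK} (which computes $\HOM(K,K)\cong\e$ and $\HOM'(K,K)\cong\e$ using \eqref{e:internal_Hom2a}--\eqref{e:internal_Hom1b} and \eqref{e:D1}); you re-derive the first via uniqueness of right adjoints and carry out the second explicitly, then add the standard $L\cong R$ argument that the paper leaves implicit.
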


See \S\ref{sss:inv&rig} for the proof.

\section{More examples of Grothendieck-Verdier categories}
\label{s:examples}
We already gave some examples in \S\ref{ss:first_examples}. More examples are below.

\begin{example}\label{example:dual-gen3}
As far as we understand, O.~Gabber recently proved that $(\sD (X),\otimes )$ is an
r-category for any excellent regular scheme $X$ over $\bZ [\ell^{-1}]$ (not necessarily of finite type).
\end{example}

\begin{example}     \label{example:dual-gen5}
Here is a generalization of Example~\ref{example:dual-gen4}. Suppose we have a groupoid in the category of schemes of finite type over a field $k$. Let $\Ga$ denote its ``scheme of morphisms,'' and let $X$ denote its ``scheme of objects,'' so one has the source and target maps $s,t:\Ga\rar{}X$,
the unit $1:X\rar{}\Ga$, the inversion $\iota :\Ga\rar{\simeq}\Ga$ and the product $\mu :\Ga\times_X\Ga\rar{}\Ga$,
where
\[
\Ga\times_X\Ga:=\{ (\ga_1,\ga_2)\in\Ga\times\Ga \st s(\ga_1)=t(\ga_2)\}.
\]
For $M_1,M_2\in\sD(\Ga )$ set $M_1*M_2:=\mu_!\bigl(p_1^* M_1\tens p_2^* M_2\bigr)$,
where $p_1,p_2:\Ga\times_X\Ga\rar{}\Ga$ are the projections. Then $\sD (\Ga )$ becomes
a monoidal category with unit object $\e =1_!\ql\,$. Define $K\in\sD (\Gamma )$ by $K:=1_*K_X$,
where $K_X\in\sD (X)$ is the dualizing complex. Then $(\sD (\Gamma ),K)$ is a
Grothendieck-Verdier category with duality functor
$\bD_{\Ga}^-:=\bD_{\Ga}\circ\iota^*=\iota^*\circ\bD_{\Ga}$. Moreover, if an algebraic group $H$
acts on $(\Ga ,X,s,t,\mu )$ then $(\sD_H(\Gamma ),K)$ is a
Grothendieck-Verdier category, where $\sD_H(\Gamma):=D^b_c(H\backslash\Ga,\ql)$ is the bounded derived category \cite{Las-Ols06} of the quotient stack $H\backslash\Ga$. (The proof of these assertions is very similar to the proof of
\cite[Lemma A.10]{foundations},
so we omit it.) If $X$ is smooth and the embedding
$1:X\rar{}\Ga$ is closed then $1_*K_X=1_!K_X$ is an invertible object of $\sD (\Gamma )$, so
$\sD (\Gamma )$ is an $r$-category. Note that if $\Ga =X\times X$ then $1:X\rar{}\Ga$ is the
diagonal embedding, so the above closedness condition means that $X$ is separated.
\end{example}

The following elementary example of a non-rigid r-category is closely related to the works of
Grothendieck in functional analysis. We learned this example from \cite{Barr79}.

\begin{example}   \label{example:earlyGrothendieck}
Let $\cM$ be the category of finite-dimensional normed vector spaces over $\bR$ with
morphisms being linear operators of norm $\le 1$.
For $V,W\in\cM$ define $V\tens W$ to be the tensor product of vector spaces $V$ and $W$
equipped with the maximal norm such that $||v\tens w||\le ||v||\tens ||w||$
for all $v\in V$, $w\in W$. The symmetric monoidal category $\cM$ is an r-category with
$D$ being the usual dual of a normed vector space. But $\cM$ is not rigid. In fact, an object $V\in\cM$
is rigid if and only if $\dim V\le 1$ (to prove the ``only if\," statement, note that if $V$ is rigid then the composition of the coevaluation map $\e\rar{} V\tens V^*$ and the evaluation map $V\tens V^*\rar{}\e$ has norm $\le 1$, where $V^*$ denotes the dual of $V$). By definition, $DV\tens W$ identifies with
 the space $\Hom (V,W)$ equipped with the {\em nuclear norm.} On the other hand,
one easily shows that $D(V\tens DW)$ is the space $\Hom (V,W)$ equipped with the
{\em operator norm.}
\end{example}

One can obtain more examples of \GV categories
using Lemma \ref{l:GV-Hecke-subcategory} below. To formulate it, we need the following
definition from \cite[\S2]{foundations}.

\begin{defin}   \label{d:closed_idemp}
A morphism $\pi :\e\rar{}e$ in
$\cM$ is said to be an \emph{idempotent arrow} if both
morphisms $\pi\tens\id_e:\e\tens e\rar{}e\tens e$ and $\id_e\tens\pi:\e\tens e\rar{}e\tens e$
are isomorphisms. An object $e$ of a monoidal category $\cM$ is said to
be a \emph{closed idempotent}\footnote{In the situation of Definition~\ref{d:closed_idemp} one
has $e\tens e\simeq e$, so the name ``idempotent" is justified. The adjective ``closed" is due to the fact that closed idempotents in the monoidal category $\cM=\sD (X)$ from
Example~\ref{example:GVoriginal}  bijectively correspond to closed subsets  $Y\subset X$.
Namely, such $Y$ defines a closed idempotent $e=(\ql )_Y\in\sD (X)$, and the corresponding
monoidal category \eqref{e:eMe} identifies with ${\mathscr D} (Y)$.} if there exists an idempotent arrow $\e\rar{}e$.
\end{defin}

\begin{rem}
In the situation of Example~\ref{example:dual-gen4} with $G$ \emph{unipotent} the categories
$\sD (G)$ and $\sD_G(G)$ have many closed idempotents, see \cite[\S1]{foundations}
(especially \S 1.11 and Theorems 1.41(a), 1.49(c) from \cite{foundations}).
\end{rem}

If $e\in\cM$ is a closed idempotent, we set
\begin{equation}   \label{e:eMe}
e\cM e: = \bigl\{ X\in\cM \st X\cong e\tens Y\tens e \text{ for some } Y\in\cM \bigr\}.
\end{equation}
The tensor product of objects of $e\cM e$ clearly belongs to $e\cM e$.
Equipped with this tensor product, $e\cM e$ is a monoidal category with unit object $e$,
see \cite[Lem\-ma~2.18]{foundations}. More precisely, an idempotent arrow
$\pi :\e\rar{}e$ defines a structure of unit object on $e$, see Lemma~\ref{l:idempotents-auxiliary}(b)
below.

\begin{defin}   \label{d:Hecke}
We call $e\cM e\subset\cM$ the \emph{Hecke subcategory} of $\cM$ defined
by $e$.
\end{defin}

\begin{lem}\label{l:GV-Hecke-subcategory}
Let $(\cM,K)$ be a Grothendieck-Verdier category, and
let $e\in\cM$ be a closed idempotent such that $D^2e\cong e$. Then
$De$ is a dualizing object of the monoidal category $e\cM e$, so
$(e\cM e, De)$ is a Grothendieck-Verdier category. In fact, $D (e\cM e)=e\cM e$, and
the duality functor for $(e\cM e, De)$ is isomorphic to the restriction of $D$ to $e\cM e$.
\end{lem}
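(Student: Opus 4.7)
My plan has three stages. First, invoke \cite[Lemma~2.18]{foundations} to conclude that $e\cM e$ is monoidal with unit $e$ and tensor product inherited from $\cM$; in particular the canonical isomorphism $Y\otimes e\cong Y$ holds for each $Y\in e\cM e$. Second, prove the inclusion $D(e\cM e)\subseteq e\cM e$. Third, verify the universal property of $De$.

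Granting the second stage, the third is short: for $X,Y\in e\cM e$,
\[
\Hom_{e\cM e}(X\otimes Y,De)\cong\Hom_\cM(X\otimes Y\otimes e,K)\cong\Hom_\cM(X\otimes Y,K)\cong\Hom_\cM(X,DY)=\Hom_{e\cM e}(X,DY),
\]
using fullness, \eqref{e:duality1} applied to $De$, the identification $Y\otimes e\cong Y$, \eqref{e:duality1} again, and $DY\in e\cM e$ in the last equality; this exhibits $DY$ as the value at $Y$ of the duality functor for $(e\cM e,De)$.

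The main work is therefore the inclusion $D(e\cM e)\subseteq e\cM e$, which I will reach via a stronger preliminary valid for any closed idempotent in any \GV category: $D(\cM e)\subseteq \cM e$, where $\cM e:=\{Y\in\cM : \id_Y\otimes\pi\text{ is an isomorphism}\}$. To prove this, for $Y\in\cM e$ I will show that the morphism $\HOM(e,Y)\to Y$ obtained by applying $\HOM(-,Y)$ to $\pi:\e\to e$ is an isomorphism. By Yoneda and the defining adjunction of $\HOM$, this amounts to showing that the natural transformation $\Hom(-\otimes e,Y)\to\Hom(-,Y)$ given by pre-composition with $\id_A\otimes\pi$ is a natural isomorphism. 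For the inverse I set $\tilde f:=(\id_Y\otimes\pi)^{-1}\circ(f\otimes\id_e):A\otimes e\to Y$ for $f:A\to Y$, using $Y\in\cM e$; bifunctoriality of $\otimes$ gives $\tilde f\circ(\id_A\otimes\pi)=f$. For uniqueness, if $h:A\otimes e\to Y$ satisfies $h\circ(\id_A\otimes\pi)=0$, then tensoring on the right with $\id_e$ yields $h\otimes\id_e=0$, and the canonical isomorphisms $A\otimes e\otimes e\cong A\otimes e$ (from $e\otimes e\cong e$) and $Y\otimes e\cong Y$ (from $Y\in\cM e$) identify $h\otimes\id_e$ with $h$, so $h=0$. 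Since $\HOM(e,Y)=D^{-1}(DY\otimes e)$, applying $D$ to $\HOM(e,Y)\cong Y$ gives $DY\cong DY\otimes e$, proving $DY\in\cM e$.

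Passing to the opposite monoidal category, where the duality becomes $D^{-1}$ by Remark~\ref{r:r}(i) and the analog of $\cM e$ is $e\cM$, the same argument yields $D^{-1}(e\cM)\subseteq e\cM$. The hypothesis $D^2e\cong e$ enters only here, through Proposition~\ref{p:DD'}: the canonical monoidal structure on $D^2$ combined with $D^2e\cong e$ implies, for each $Y=e\otimes Y'\otimes e\in e\cM e$, that $D^2 Y\cong e\otimes D^2Y'\otimes e$, so in particular $D^2Y\in e\cM$. Combining, for $Y\in e\cM e$ I have $DY\in\cM e$ from the preliminary and $DY=D^{-1}(D^2Y)\in e\cM$ from the preliminary applied in the opposite category to $D^2Y$, whence $DY\in e\cM\cap\cM e=e\cM e$. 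Since $D:\cM\to\cM$ is an antiequivalence, its restriction to the full subcategory $e\cM e$ is fully faithful; essential surjectivity follows because $D^2$ restricts to an auto-equivalence of $e\cM e$. The delicate point is the uniqueness part of the argument that $\HOM(e,Y)\cong Y$ for $Y\in\cM e$: it hinges on the canonical isomorphism $e\otimes e\cong e$ built into the idempotent-arrow structure on $e$.
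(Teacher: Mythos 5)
The paper gives no proof here; it simply refers to \cite[Lemma~A.50]{foundations}, so there is nothing in the text to compare your argument against. Your overall architecture is sound: reduce to $D(e\cM e)\subseteq e\cM e$, get that from two one-sided preliminaries plus the monoidality of $D^2$ together with $D^2e\cong e$, and then verify the universal property of $De$ by a short chain of adjunctions.

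However, your preliminary is misstated and the bookkeeping does not match the paper's conventions. You claim that for $Y\in\cM e$ the map $\HOM(e,Y)\to Y$ is an isomorphism, using $\HOM(e,Y)=D^{-1}(DY\otimes e)$. By \eqref{e:internal_Hom2b}, $\HOM(e,Y)$ represents $A\mapsto\Hom(e\otimes A,Y)$, and the map induced by $\pi$ is precomposition with $\pi\otimes\id_A$. To invert that you would need $\pi\otimes\id_Y$ to be an isomorphism, i.e.\ $Y\in e\cM$, not $Y\in\cM e$. The inverse you actually write, $\tilde f=(\id_Y\otimes\pi)^{-1}\circ(f\otimes\id_e):A\otimes e\to Y$, inverts precomposition with $\id_A\otimes\pi$ in $\Hom(A\otimes e,Y)\to\Hom(A,Y)$, which is the adjunction \eqref{e:internal_Hom1b} for the \emph{other} internal Hom, $\HOM'(e,Y)=D(e\otimes D^{-1}Y)$. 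What your computation really proves is that for $Y\in\cM e$, $\pi\otimes\id_{D^{-1}Y}$ is an isomorphism, i.e.\ $D^{-1}(\cM e)\subseteq e\cM$; equivalently, by passing to the opposite monoidal category, $D(e\cM)\subseteq\cM e$. Fortunately the final combination still works with the correctly stated preliminaries: for $Y\in e\cM e$, applying $D(e\cM)\subseteq\cM e$ to $Y\in e\cM$ gives $DY\in\cM e$, and applying $D^{-1}(\cM e)\subseteq e\cM$ to $D^2Y\in\cM e$ (where $D^2Y\in e\cM e$ uses Proposition~\ref{p:DD'} and $D^2e\cong e$) gives $DY=D^{-1}(D^2Y)\in e\cM$, whence $DY\in e\cM\cap\cM e=e\cM e$. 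Two smaller points: the uniqueness step invokes ``$h\circ(\id_A\otimes\pi)=0\Rightarrow h=0$,'' which tacitly assumes an additive (or at least pointed) category; instead check directly that $f\mapsto\tilde f$ is a two-sided inverse, i.e.\ $\widetilde{h\circ(\id_A\otimes\pi)}=h$, which follows from bifunctoriality and Lemma~\ref{l:idempotents-auxiliary}(a). Finally, passing from ``$DY\cong DY\otimes e$'' to ``$DY\in\cM e$'' requires that the isomorphism be the specific map $\id_{DY}\otimes\pi$ rather than an arbitrary one; this does hold once the $\HOM$ vs.\ $\HOM'$ bookkeeping is corrected, because $D^{-1}$ reflects isomorphisms, but it should be said explicitly.
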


See \cite[Lemma A.50]{foundations} for a more precise version of Lemma~\ref{l:GV-Hecke-subcategory} and a proof.

\mbr

In \S\ref{s:relation} below we answer the following question:
which \GV categories can be realized as Hecke subcategories of r-categories? For example, their class includes all additive \GV categories (apply Proposition~\ref{p:inverse-construction} to $f=0$).

\section{Rigidity in r-categories}  \label{s:rigidity}

\subsection{The second tensor product in an r-category} \label{sss:odot}
Let $\cM$ be an r-category. Define a new monoidal
structure\footnote{One could also consider the monoidal structure given
by $(X,Y)\longmapsto D(D^{-1}Y\otimes D^{-1}X)$, but Proposition~\ref{p:DD'} below
allows us to identify it with $X\odot Y$.} $\odot:\cM\times\cM\to\cM$
by
\begin{equation}  \label{e:otherproduct}
X\odot Y:=D^{-1}(DY\otimes DX).
\end{equation}
Let us define a morphism
\begin{equation}  \label{e:forgetsupp}
X\otimes Y\to X\odot Y,  \quad X,Y\in\cM .
\end{equation}
To this end, note that by \eqref{e:duality1} we have canonical
morphisms $DX\otimes X\to\e$ and $DY\otimes Y\to\e$. So we get a
morphism $DY\tens DX\tens X\tens Y\to\e$, and by \eqref{e:duality2}
this is the same as a morphism
$X\tens Y\to D^{-1}(DY\otimes DX)=X\odot Y$.
Clearly \eqref{e:forgetsupp} is functorial in $X,Y$.

\begin{lem}
The morphism \eqref{e:forgetsupp} is compatible with the associativity constraints for
$\otimes$ and $\odot$.
\end{lem}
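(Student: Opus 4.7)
The plan is to translate the assertion through the adjunction \eqref{e:duality2}, so that the question becomes a statement about pairings $(DZ \otimes DY \otimes DX) \otimes (X \otimes Y \otimes Z) \to \e$; once on that side, MacLane coherence for $\otimes$ will do the rest.

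First, I would record the tautology that \eqref{e:forgetsupp} is, by construction, the unique morphism $X \otimes Y \to D^{-1}(DY \otimes DX)$ whose image under \eqref{e:duality2} is the composite
\[
(DY \otimes DX) \otimes (X \otimes Y) \xrightarrow{\sim} DY \otimes ((DX \otimes X) \otimes Y) \xrightarrow{\,\id \otimes\, \ev_X \otimes \id\,} DY \otimes Y \xrightarrow{\ev_Y} \e,
\]
where $\ev_V : DV \otimes V \to \e$ denotes the counit corresponding to $\id_{DV}$ under \eqref{e:duality1}. I would also record that the associativity constraint $\alpha^{\odot}$ for $\odot$ is, by definition of $\odot$ via \eqref{e:otherproduct} together with the obvious identification $D(U \odot V) = DV \otimes DU$, obtained by applying $D^{-1}$ to the associativity constraint $\alpha^{\otimes}$ for $\otimes$ on the duals.

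Next I would apply \eqref{e:duality2} to both composites
$(X \otimes Y) \otimes Z \to (X \odot Y) \otimes Z \to (X \odot Y) \odot Z \to X \odot (Y \odot Z)$
and
$(X \otimes Y) \otimes Z \to X \otimes (Y \otimes Z) \to X \otimes (Y \odot Z) \to X \odot (Y \odot Z)$. Each becomes a morphism $(DZ \otimes DY \otimes DX) \otimes ((X \otimes Y) \otimes Z) \to \e$, and unfolding the recipe above twice exhibits each as the triple pairing obtained by successively contracting $DX$ against $X$, $DY$ against $Y$, and $DZ$ against $Z$; the only apparent discrepancy between the two expressions is absorbed by an instance of the pentagon for $\otimes$ applied to $(DZ, DY, DX)$ (or equivalently to $(X, Y, Z)$).

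The main obstacle is purely the bookkeeping of associators at each step. Once one commits to working on the dual side throughout and reads \eqref{e:forgetsupp} as the counit of a multi-variable adjunction, the whole argument reduces to naturality of \eqref{e:duality2}, naturality of $\ev$, and MacLane's pentagon. Accordingly, I would present the proof as a single commutative diagram in $\Hom\bigl((DZ \otimes DY \otimes DX) \otimes ((X \otimes Y) \otimes Z),\,\e\bigr)$ whose faces are instances of these three principles.
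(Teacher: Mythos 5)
Your proposal is correct and matches the paper's own proof in its essential approach: the paper also transposes the two associativity composites through \eqref{e:duality2} into pairings $DX_3\otimes DX_2\otimes DX_1\otimes X_1\otimes X_2\otimes X_3\to\e$ and observes that each is the same triple evaluation $h$, hence the two composites coincide. The paper simply suppresses the associator bookkeeping (naturality, pentagon) that you spell out, treating $\otimes$ as if strict, but the underlying idea is identical.
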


\begin{proof}

We have to show that the morphisms
$$f:X_1\otimes X_2\otimes X_3=(X_1\otimes X_2)\otimes X_3\to (X_1\odot X_2)\odot X_3=
X_1\odot X_2\odot X_3$$ 
and
$$g:X_1\otimes X_2\otimes X_3=X_1\otimes (X_2\otimes X_3)\to X_1\odot (X_2\odot X_3)=X_1\odot X_2\odot X_3$$ 
coming from \eqref{e:forgetsupp} are equal. By \eqref{e:duality1}, we have canonical
morphisms $DX_i\otimes X_i\to\e$ and therefore a morphism $h:DX_3\tens DX_2\tens DX_1\tens
X_1\tens  X_2\tens X_3\to\e$. By \eqref{e:duality2}, this is the same as a morphism
$h':X_1\otimes X_2\otimes X_3\to X_1\odot X_2\odot X_3$. Both $f$ and $g$ equal $h'$.
\end{proof}

\begin{rem}
Since $\cM$ is an r-category, $\e$ is a unit object for both $\tens$ and $\odot$. It is not hard to check that the morphism \eqref{e:forgetsupp} is compatible with the unit constraint for $\tens$ and $\odot$.
\end{rem}

\begin{example}   \label{example:forgetsupp}
In the situation of Example~\ref{example:dual-gen4} the monoidal functor \eqref{e:otherproduct}
is the convolution without compact support and \eqref {e:forgetsupp} is the usual morphism.
\end{example}

\subsection{Rigidity in $r$-categories}
Let us discuss the relation between the functor $D:\cM\rar{}\cM$ and the notion of rigid duality from
Definition~\ref{d:left-dual}.

\begin{prop}    \label{p:rigidity}
Let $\cM$ be an r-category and $\eps :A\tens B\rar{}\e$ a morphism in $\cM$. Then $(B,\eps )$ is
a right rigid dual of $A$ if and only if
\begin{enumerate}[$($a$)$]
\item $\eps$ induces an isomorphism $B\rar{\simeq} D^{-1}A$; and
 \sbr
\item the canonical morphism $B\tens A\rar{} B\odot A$ defined in \S\ref{sss:odot} is an isomorphism.
\end{enumerate}
In this case the canonical morphisms $B\tens Y\rar{} B\odot Y$ and $Y\tens A\rar{} Y\odot A$
are isomorphisms for all $Y\in\cM$.
\end{prop}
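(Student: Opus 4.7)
The plan is to prove the biconditional in two directions, with the ``moreover'' statement following in either direction.

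For the forward implication, assume $(B, \eps)$ is a right rigid dual of $A$ with coevaluation $\eta : \e \to B \tens A$. The adjunction $A \tens - \dashv B \tens -$ induced by $(\eps, \eta)$ specializes at $Y = \e$ to $\Hom(A \tens X, \e) \cong \Hom(X, B)$ naturally in $X$; comparing with \eqref{e:duality2} yields an isomorphism $B \iso D^{-1}A$ induced by $\eps$, establishing (a). The right adjoint of $A \tens -$ is also computed by the formula $\HOM(A, Y) = D^{-1}(DY \tens A)$ from \eqref{e:internal_Hom2a}, so $B \tens Y \cong D^{-1}(DY \tens A)$; using (a) in the form $A \cong DB$, this rewrites as $B \odot Y$, and the resulting isomorphism $B \tens Y \iso B \odot Y$ is verified to coincide with the canonical morphism from \S\ref{sss:odot} by unwinding the definitions. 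Taking $Y = A$ gives (b). An analogous argument using the other adjunction $- \tens B \dashv - \tens A$ handles $Y \tens A \iso Y \odot A$; this uses the canonical isomorphism $D^2 B \cong B$ that holds for the rigid object $B$.

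For the backward implication, assume (a) and (b). From (a) we have $DB \cong A$, so $B \odot A = D^{-1}(DA \tens A)$. The representability bijection $\Hom(\e, D^{-1}(DA \tens A)) \cong \Hom(DA \tens A, \e)$ allows me to define $\eta' : \e \to B \odot A$ as the morphism corresponding to the canonical evaluation $\ev_A : DA \tens A \to \e$ (the element representing $\id_A$ via \eqref{e:duality2}). Using the isomorphism from (b), lift $\eta'$ to a morphism $\eta : \e \to B \tens A$, the candidate coevaluation.

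The main obstacle is verifying the snake identities for $(\eps, \eta)$. For the first identity $(\eps \tens \id_A) \circ (\id_A \tens \eta) = \id_A$, I plan to use the bijection $\Hom(A, A) \cong \Hom(A \tens B, \e)$ from \eqref{e:duality1} (under which $\id_A \leftrightarrow \eps$) to translate the snake identity into an equality of two morphisms $A \tens B \to \e$. Unwinding the construction of $\eta$ through $\eta'$ and the explicit description of the canonical morphism $B \tens A \to B \odot A$ (which via representability of $D^{-1}$ corresponds to a morphism $DA \tens A \tens B \tens A \to \e$ built from $\ev_A$ and $\eps$) should reduce the desired identity to a tautology; the main technical hurdle will be the bookkeeping of canonical isomorphisms. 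The second snake identity is verified analogously using $\Hom(B, B) \cong \Hom(A \tens B, \e)$ from \eqref{e:duality2}. Once rigidity is established, the ``moreover'' statement follows as in the forward direction.
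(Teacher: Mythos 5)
Your proposal takes a genuinely different route from the paper's. The paper proves Proposition~\ref{p:rigidity} by reducing it to the criterion of Proposition~\ref{p:criterion-for-duality}, which characterizes right rigid duals in terms of internal $\Hom$'s; it then simply identifies $\HOM(A,Y)$ with $D^{-1}A\odot Y$ and matches conditions (a), (b) against clauses (ii), (iii) of that criterion. In particular, the paper never constructs a coevaluation or touches the snake identities in this proof --- that hard work is done once in the proof of Proposition~\ref{p:criterion-for-duality} (the implication (iii$''$)$\Rightarrow$(i)), partly by citation of Deligne and Joyal--Street. For the final claim about $Y\tens A\to Y\odot A$, the paper just passes to the opposite tensor product (Remark~\ref{r:r}(i)), where $D$ becomes $D^{-1}$ and the same argument applies. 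Your plan instead proves the biconditional ``from scratch'' by explicitly building a coevaluation in the backward direction and verifying the snake identities --- which is essentially a re-derivation of the (iii)$\Rightarrow$(i) implication specialized to this situation. That is a legitimate and more self-contained strategy, but it makes the main technical content (the bookkeeping of canonical isomorphisms in the snake identities) part of your proof rather than a lemma you can cite, and you have left that step as an intention.

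One concrete error: in the forward direction you invoke ``the canonical isomorphism $D^2B\cong B$ that holds for the rigid object $B$.'' This is not a fact. Even for a fully rigid object $X$ in an r-category (and here $B$ is only known to have a \emph{left} rigid dual, namely $A$), $D^2X$ is generally not isomorphic to $X$; the existence of such isomorphisms is precisely what a pivotal structure provides. Fortunately, your argument does not actually need this. To identify $\HOM'(B,Y)=D(B\tens D^{-1}Y)$ with $Y\odot A = D^{-1}(DA\tens DY)$, it suffices to use the monoidality of $D^2$ (Proposition~\ref{p:DD'}) together with $B\cong D^{-1}A$: applying $D^{-1}$ to both sides turns the question into $B\tens D^{-1}Y\cong D^{-2}(DA\tens DY)\cong D^{-1}A\tens D^{-1}Y$. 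Alternatively --- and this is cleaner --- replace the appeal to $D^2B\cong B$ by the paper's opposite-tensor-product trick: the statement for $Y\tens A$ is exactly the statement for $A\tens Y$ in the opposite monoidal category, where $\odot$ and $D$ are replaced by the corresponding opposites and $D^{-1}$.
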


See \S\ref{ss:proof_of_rigidity_prop} for the proof of the proposition.

\begin{cor}   \label{c:rigidity1}
An object $X$ of an r-category $\cM$ is rigid if and only if the canonical morphisms
$X\tens DX\rar{}X\odot DX$ and $D^{-1}X\tens X\rar{}D^{-1}X\odot X$ defined in \S\ref{sss:odot}
are isomorphisms. Then the left rigid dual of $X$ equals $DX$, the right one equals $D^{-1}X$, and
the canonical morphisms $X\tens Y\rar{}X\odot Y$ and $Y\tens X\rar{}Y\odot X$ are isomorphisms for all
$Y\in\cM$.     \hfill\qedsymbol
\end{cor}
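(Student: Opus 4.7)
The plan is to deduce both halves of the corollary by two applications of Proposition \ref{p:rigidity}, one for right rigid duals and one for left rigid duals. Throughout, I use that in an r-category $\e = K$, so \eqref{e:duality1} and \eqref{e:duality2} give canonical isomorphisms $\Hom(A\tens B,\e)\cong\Hom(A,DB)\cong\Hom(B,D^{-1}A)$.

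For the right-dual half, I would apply Proposition \ref{p:rigidity} with $A=X$ and $B=D^{-1}X$, taking $\eps:X\tens D^{-1}X\to\e$ to be the canonical counit corresponding to $\id_{D^{-1}X}$ under \eqref{e:duality2}. By construction condition (a) of the proposition is automatic, so $(D^{-1}X,\eps)$ is a right rigid dual of $X$ if and only if the canonical map $D^{-1}X\tens X\to D^{-1}X\odot X$ is an isomorphism. Conversely, if $X$ admits any right rigid dual $(B',\eps')$, the proposition forces $\eps'$ to induce an iso $B'\iso D^{-1}X$; transporting condition (b) along this iso shows that the canonical map is an iso. This proves the right-dual characterization and identifies the right rigid dual as $D^{-1}X$.

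For the left-dual half, I would analogously apply Proposition \ref{p:rigidity} with $A=DX$ and $B=X$, taking $\eps:DX\tens X\to\e$ to be the canonical counit corresponding to $\id_{DX}$ under \eqref{e:duality1}. A left rigid dual of $X$ is, by definition, an object $C$ with a counit $C\tens X\to\e$ exhibiting $X$ as a right rigid dual of $C$. With the above choice, condition (a) of the proposition is again automatic, so by the proposition $DX$ is a left rigid dual of $X$ if and only if the canonical map $X\tens DX\to X\odot DX$ is an isomorphism; the converse direction runs exactly as above. Combining the two applications yields the stated characterization of rigidity and identifies both one-sided duals.

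The final assertion — that $X\tens Y\to X\odot Y$ and $Y\tens X\to Y\odot X$ are isomorphisms for every $Y\in\cM$ — I would read off directly from the last sentence of Proposition \ref{p:rigidity}: applied to $(A,B)=(X,D^{-1}X)$ it gives $Y\tens X\to Y\odot X$ is iso, and applied to $(A,B)=(DX,X)$ it gives $X\tens Y\to X\odot Y$ is iso. The only step requiring any care is bookkeeping: verifying that the specific counits chosen really do make condition (a) tautological under the chosen normalization of \eqref{e:duality1}/\eqref{e:duality2}, which is routine but needs to be written out once to fix conventions. Beyond that, there is no real obstacle — the corollary is essentially a formal specialization of Proposition \ref{p:rigidity}.
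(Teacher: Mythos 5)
Your proof is correct and matches the intended reading of the corollary: the paper attaches $\qedsymbol$ to the statement without giving a separate argument, leaving it as an immediate specialization of Proposition~\ref{p:rigidity}. Your two applications (with $(A,B)=(X,D^{-1}X)$ for the right dual and $(A,B)=(DX,X)$ for the left dual, each with the Yoneda-canonical counit making condition~(a) tautological) are exactly that specialization, and the uniqueness-of-dual transport for the converse direction plus the final read-off of the last sentence of Proposition~\ref{p:rigidity} are all in order.
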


\begin{cor}   \label{c:rigidity2}
The following properties of an r-category $\cM$ are equivalent:
\begin{enumerate}[$($i$)$]
\item $\cM$ is rigid;
 \sbr
\item the canonical morphism $X\tens Y\rar{} X\odot Y$ defined in \S\ref{sss:odot} is an isomorphism
for every $X,Y\in\cM$;
 \sbr
\item the canonical morphism $X\tens DX\rar{}X\odot DX$ is an isomorphism for every $X\in\cM$.
\hfill\qedsymbol
\end{enumerate}
\end{cor}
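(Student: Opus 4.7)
The plan is to prove the chain of implications (i) $\Rightarrow$ (ii) $\Rightarrow$ (iii) $\Rightarrow$ (i), using Corollary~\ref{c:rigidity1} as the key engine. All three implications follow easily from that corollary; the only subtlety is a substitution in the last step.

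For (i) $\Rightarrow$ (ii), assume $\cM$ is rigid. Then every object $X\in\cM$ is rigid, so by Corollary~\ref{c:rigidity1} the canonical morphism $X\tens Y\rar{}X\odot Y$ is an isomorphism for every $Y\in\cM$. The implication (ii) $\Rightarrow$ (iii) is immediate by specializing $Y=DX$.

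The remaining implication (iii) $\Rightarrow$ (i) is where the only real content lies. Assume (iii). To deduce that an arbitrary object $X\in\cM$ is rigid, we apply the criterion of Corollary~\ref{c:rigidity1}, which requires both of the morphisms $X\tens DX\rar{}X\odot DX$ and $D^{-1}X\tens X\rar{}D^{-1}X\odot X$ to be isomorphisms. The first is part of the assumption (iii). For the second, set $X':=D^{-1}X$, so that $DX'=X$ since $D$ is an antiequivalence; then the morphism $D^{-1}X\tens X\rar{}D^{-1}X\odot X$ is identified with $X'\tens DX'\rar{}X'\odot DX'$, which is an isomorphism by (iii) applied to $X'$. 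Thus $X$ is rigid.

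I expect no real obstacle, since Corollary~\ref{c:rigidity1} does all of the heavy lifting; one only has to make sure that the naturality of the construction in \S\ref{sss:odot} guarantees that the morphism $D^{-1}X\tens X\rar{}D^{-1}X\odot X$ indeed coincides, under the identification $X=DX'$, with the morphism $X'\tens DX'\rar{}X'\odot DX'$ furnished by (iii). This is a formal consequence of the definition \eqref{e:otherproduct} of $\odot$ and the construction of \eqref{e:forgetsupp}, both of which only use the adjunctions \eqref{e:duality1} and \eqref{e:duality2}.
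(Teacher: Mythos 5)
Your proof is correct and follows exactly the route the paper intends: the corollary is stated with \qedsymbol because it is a direct consequence of Corollary~\ref{c:rigidity1}, and your chain (i)~$\Rightarrow$~(ii)~$\Rightarrow$~(iii)~$\Rightarrow$~(i) via that corollary, with the substitution $X' = D^{-1}X$ in the last step, is precisely the expected argument. The ``subtlety'' you flag at the end is in fact a tautology: $D^{-1}X\tens X$ and $D^{-1}X\odot X$ are \emph{literally} the expressions $X'\tens DX'$ and $X'\odot DX'$ once $X' := D^{-1}X$ and $DX' \cong X$ are substituted, so no separate naturality check is needed.
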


\begin{rem}
Corollary \ref{c:rigidity2} is probably well known; for instance, the equivalence $(i)\iff(ii)$ is proved in the last paragraph of \cite[Section 5]{day-street}.
\end{rem}

\begin{cor}   \label{c:properness}
The monoidal category $\sD (G)$ from Example~\ref{example:dual-gen4}
is rigid if and only if $G$ is proper. The same is true for $\sD_G(G)$.
\end{cor}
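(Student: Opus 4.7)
\textbf{Proof plan for Corollary \ref{c:properness}.}

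My plan is to reduce everything to Corollary~\ref{c:rigidity2}(ii), which says that $\sD (G)$ (resp.\ $\sD_G(G)$) is rigid if and only if the canonical morphism $X\tens Y\rar{} X\odot Y$ is an isomorphism for every $X,Y$. By Example~\ref{example:forgetsupp} this morphism is precisely the natural transformation $\mu_!(p_1^*X\tens p_2^*Y)\rar{} \mu_*(p_1^*X\tens p_2^*Y)$, where $\mu :G\times G\rar{} G$ is the multiplication. Throughout I will use the scheme automorphism $\phi :G\times G\iso G\times G$, $(g,h)\mapsto (g,gh)$, which satisfies $\mu =p_2\circ\phi$.

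The easy direction: if $G$ is proper then $p_2 :G\times G\rar{} G$ is proper, hence so is $\mu$, hence $\mu_!=\mu_*$ and therefore $\tens =\odot$, so $\sD (G)$ is rigid by Corollary~\ref{c:rigidity2}.

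For the converse, assume $\sD (G)$ is rigid and apply Corollary~\ref{c:rigidity2}(ii) to the test objects $X=Y=\ql_G$. Using the identification $\mu =p_2\circ\phi$ together with $\phi_!\ql_{G\times G}=\ql_{G\times G}$, one computes
\[
\mu_!\ql_{G\times G}\;\cong\; R\Gamma_c(G,\ql)\tens_{\ql}\ql_G,\qquad
\mu_*\ql_{G\times G}\;\cong\; R\Gamma (G,\ql)\tens_{\ql}\ql_G,
\]
and the canonical morphism between them is induced by the adjunction morphism $R\Gamma_c(G,\ql)\rar{} R\Gamma (G,\ql)$. Its being an isomorphism implies in particular that $H^0_c(G,\ql)\iso H^0(G,\ql)$. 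Since $H^0_c$ of a variety of finite type over $k$ is spanned by the \emph{proper} connected components while $H^0$ is spanned by all connected components, every connected component of $G$ must be proper. As $G$ is a group scheme of finite type, its connected components are all translates of the identity component $G^0$, and $\pi_0(G)$ is finite; hence $G^0$ proper forces $G$ proper. The argument for $\sD_G(G)$ is literally the same, using the canonical (trivial) conjugation-equivariant structure on $\ql_G$ and the fact that the forgetful functor $\sD_G(G)\rar{}\sD (G)$ intertwines convolution.

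The only delicate point I foresee is checking that under the Künneth/base-change identifications above the canonical map $\mu_!\ql_{G\times G}\rar{}\mu_*\ql_{G\times G}$ from Example~\ref{example:forgetsupp} really does coincide with $R\Gamma_c(G,\ql)\rar{} R\Gamma (G,\ql)$ tensored with $\ql_G$; this is a routine functoriality compatibility but worth a brief verification.
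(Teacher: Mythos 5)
Your proof takes essentially the same route as the paper: both reduce to the equivalence (i)$\Leftrightarrow$(ii) of Corollary~\ref{c:rigidity2} and both test on $X=Y=\ql_G$, the constant sheaf. The paper's proof is a one-paragraph sketch that simply asserts the converse direction; what you have done is supply the computation that is implicit there, namely that $\mu_!\ql_{G\times G}\to\mu_*\ql_{G\times G}$ reduces via the shear $\phi$ and Künneth to $R\Gamma_c(G,\ql)\tens\ql_G\to R\Gamma(G,\ql)\tens\ql_G$, and that matching $H^0_c$ with $H^0$ forces every connected component of $G$ to be proper. That elaboration is correct (and the ``delicate point'' you flag --- that the abstract canonical map from \S\ref{sss:odot}, after the identifications, coincides with the usual $R\Gamma_c\to R\Gamma$ --- is the kind of compatibility the paper also takes for granted when it invokes Example~\ref{example:forgetsupp}). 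One small observation: once you know every connected component is proper and $\pi_0(G)$ is finite, you can conclude $G$ is proper directly, without the intermediate step through $G^0$; and for $\sD_G(G)$ your reduction via the forgetful functor works provided you also note that this functor is conservative, so that an isomorphism after forgetting equivariance is already an isomorphism upstairs (needed for the ``proper $\Rightarrow$ rigid'' direction in the equivariant case).
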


\begin{proof}
Let $\cM$ be either $\sD (G)$ or $\sD_G(G)$.
We use the equivalence (i)$\Leftrightarrow$(ii) from Corollary~\ref{c:rigidity2}.
Recall that $\tens$ is convolution with compact support,
$\odot$ is convolution without compact support, and the morphism
$f_{XY}:X\tens Y\rar{} X\odot Y$ is the usual one (see Example~\ref{example:forgetsupp}). So if $G$ is proper then $f_{XY}$ is an isomorphism
for all $X,Y\in\cM$. Conversely, if $f_{XY}$ is an isomorphism for $X=Y=\ql$ (the constant sheaf on $G$), then $G$ is proper.
\end{proof}

\section{$D^2$ as a monoidal equivalence}  \label{s:D^2monoidal}
By \eqref{e:duality3}, for each $X,Y_1,Y_2\in\cM$ one has a canonical isomorphism
\begin{equation}  \label{e:D^21}
\Hom (X\otimes Y_1\otimes Y_2, K )\rar{\simeq}\Hom (D^2(Y_1\otimes Y_2)\otimes X, K ).
\end{equation}
On the other hand, writing $X\otimes Y_1\otimes Y_2$ as $(X\otimes Y_1)\otimes Y_2$ and applying
\eqref{e:duality3} twice one gets an isomorphism
\begin{equation}   \label{e:D^22}
\Hom (X\otimes Y_1\otimes Y_2, K )\rar{\simeq}\Hom (D^2Y_1\otimes D^2Y_2\otimes X, K ).
\end{equation}
Combining \eqref{e:D^21} and \eqref{e:D^22} one gets a functorial isomorphism
\begin{equation} \label{e:D^23}
\Hom (D^2(Y_1\otimes Y_2)\otimes X, K )\rar{\simeq}\Hom (D^2Y_1\otimes D^2Y_2\otimes X, K ),
\quad X,Y_1,Y_2\in\cM .
\end{equation}

\begin{lem}   \label{l:var_Yoneda}
Let $(\cM , K)$ be a \GV category and $Z_1,Z_2\in\cM$. Then every morphism
$\Hom (Z_1\tens X,K)\to\Hom (Z_2\tens X,K)$ functorial in $X\in\cM$ comes from
a unique morphism $Z_2\rar{}Z_1$.
\end{lem}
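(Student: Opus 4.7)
The plan is to reduce the statement to the classical Yoneda lemma via the duality isomorphism \eqref{e:duality1} and the fact that $D$ is an antiequivalence.

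First I would apply the isomorphism \eqref{e:duality1} to both sides. For each $i \in \{1,2\}$ and each $X \in \cM$ one has a functorial isomorphism
\[
\Hom(Z_i \otimes X, K) \;\iso\; \Hom(X, DZ_i),
\]
and these isomorphisms are natural in $X$. Consequently, giving a morphism $\Hom(Z_1 \otimes X, K) \to \Hom(Z_2 \otimes X, K)$ functorial in $X$ is equivalent to giving a morphism of functors $\Hom(-, DZ_1) \to \Hom(-, DZ_2)$ on $\cM$.

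Next I would invoke the ordinary Yoneda lemma: natural transformations $\Hom(-, DZ_1) \to \Hom(-, DZ_2)$ are in bijection with morphisms $DZ_1 \to DZ_2$ in $\cM$. Finally, since $D : \cM \to \cM$ is an antiequivalence (by the definition of a dualizing object), the map $\Hom(Z_2, Z_1) \to \Hom(DZ_1, DZ_2)$ given by $\phi \mapsto D\phi$ is a bijection. Composing these bijections produces the desired unique morphism $Z_2 \to Z_1$ that induces the given natural transformation.

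There is no real obstacle here; the only point that requires minor care is checking that the bijection provided by the Yoneda lemma is compatible with the identification coming from \eqref{e:duality1}, i.e.\ that the constructed morphism $Z_2 \to Z_1$ genuinely recovers the original natural transformation between the functors $X \mapsto \Hom(Z_i \otimes X, K)$. This is immediate from the naturality of \eqref{e:duality1} in the variable $Z_i$ (which itself follows formally from the definition of $D$ as a representing object). Uniqueness is likewise inherited from the uniqueness in Yoneda and the faithfulness of $D$.
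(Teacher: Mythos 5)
Your overall strategy matches the paper's one-line proof (reduce to Yoneda via the duality adjunction, then use that the duality functor is an antiequivalence), but you mis-state the key isomorphism. Specializing \eqref{e:duality1} to $X\mapsto Z_i$, $Y\mapsto X$ gives $\Hom(Z_i\tens X,K)\iso\Hom(Z_i,DX)$, \emph{not} $\Hom(X,DZ_i)$ as you wrote; there is no reason for the functor $X\mapsto\Hom(Z_i\tens X,K)$ to be represented by $DZ_i$. The isomorphism that does produce a representable functor in $X$ is \eqref{e:duality2}, which yields $\Hom(Z_i\tens X,K)\iso\Hom(X,D^{-1}Z_i)$. Once you make this correction, the rest of your argument goes through verbatim with $D^{-1}$ in place of $D$: a natural transformation $\Hom(-,D^{-1}Z_1)\to\Hom(-,D^{-1}Z_2)$ corresponds by Yoneda to a unique morphism $D^{-1}Z_1\to D^{-1}Z_2$, and since $D^{-1}$ is an antiequivalence this is the same as a unique morphism $Z_2\to Z_1$. (The paper instead works with the co-Yoneda presentation $X\mapsto\Hom(Z_i,DX)$ coming from \eqref{e:duality1}, reindexing via the essential surjectivity of $D$; the two routes are essentially interchangeable.) So: right idea, but the isomorphism you invoke must be \eqref{e:duality2}, and the representing object is $D^{-1}Z_i$, not $DZ_i$.
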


\begin{proof}
Use the isomorphism $\Hom (Z_i\tens X,K)\rar{\simeq}\Hom (Z_i,DX)$ and Yoneda's lemma.
\end{proof}

Lemma~\ref{l:var_Yoneda} shows
that the isomorphism~\eqref{e:D^23} comes from a unique functorial isomorphism
\begin{equation}   \label{e:D^24}
u_{Y_1,Y_2}: D^2(Y_1\otimes Y_2)\rar{\simeq} D^2Y_1\otimes D^2Y_2 , \quad Y_1,Y_2\in\cM .
\end{equation}

\begin{prop} \label{p:DD'}
The isomorphism \eqref{e:D^24} defines a monoidal structure on the functor $D^2:\cM\iso\cM$. The corresponding isomorphism $\e\rar{\simeq}D^2(\e)$ is equal to \eqref{e:D^2(1)}.
\end{prop}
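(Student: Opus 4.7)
The plan is to verify the associativity (pentagon) axiom and the unit axioms for the candidate monoidal structure $(D^2, u, \eta)$, where $\eta \colon \e \to D^2 \e$ is the iso \eqref{e:D^2(1)}. In both cases the strategy is the same: by Lemma~\ref{l:var_Yoneda}, any equation of the form $f = g$ between morphisms in $\cM$ can be checked by comparing the induced natural transformations between the functors $X \mapsto \Hom(\cdot \otimes X, K)$, which in turn unfold, via the definition of $u$ from \eqref{e:D^23}, into compositions of instances of \eqref{e:duality3}.

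For the pentagon, we must check that the two composites
\[
(u_{Y_1, Y_2} \otimes \id_{D^2 Y_3}) \circ u_{Y_1 \otimes Y_2, Y_3}, \qquad (\id_{D^2 Y_1} \otimes u_{Y_2, Y_3}) \circ u_{Y_1, Y_2 \otimes Y_3}
\]
from $D^2(Y_1 \otimes Y_2 \otimes Y_3)$ to $D^2 Y_1 \otimes D^2 Y_2 \otimes D^2 Y_3$ (conjugated by the associator of $\cM$ in the obvious way) coincide. Unwinding the definition of each $u_{A,B}$ via \eqref{e:D^23} and using naturality of \eqref{e:duality3} in its $X$-variable, each composite induces, at the $\Hom$-level, the same natural transformation $\Hom(D^2 Y_1 \otimes D^2 Y_2 \otimes D^2 Y_3 \otimes X, K) \to \Hom(D^2(Y_1 \otimes Y_2 \otimes Y_3) \otimes X, K)$: namely, three inverse applications of \eqref{e:duality3} to peel off $Y_1, Y_2, Y_3$ successively and land in $\Hom(X \otimes Y_1 \otimes Y_2 \otimes Y_3, K)$, followed by one forward application of \eqref{e:duality3} with $Y = Y_1 \otimes Y_2 \otimes Y_3$. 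The identification of each side of the pentagon with this common expression is a direct manipulation in which several adjacent pairs of \eqref{e:duality3} and its inverse cancel.

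For the unit axioms, we must check that $u_{\e, Y} = \eta \otimes \id_{D^2 Y}$ and the analogous right-handed identity; this again reduces via Lemma~\ref{l:var_Yoneda} to a comparison at the $\Hom$-level. Both sides there reduce, after accounting for the unit constraint in $\cM$ and the naturality of \eqref{e:duality3}, to essentially the assertion of Remark~\ref{r:r}(v), which identifies (the inverse of) \eqref{e:D^2K} with the image of $\id_K$ under an instance of \eqref{e:duality3}; the analogous identity at $\e$ is obtained by combining this with the canonical isomorphisms of Remark~\ref{r:r}(iv). Once the unit axiom is verified for this particular choice of $\eta$, the identification of the monoidal unit with \eqref{e:D^2(1)} is automatic, since the unit of a monoidal functor structure is uniquely determined by the tensor structure and the triangle axioms.

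The main obstacle is the bookkeeping in the pentagon calculation: each composite unfolds into a string of instances of \eqref{e:duality3} taken with respect to various bracketings of the tensor factors, and one must track the associators of $\cM$ carefully through these manipulations. Once laid out, several adjacent pairs cancel and the two sides are visibly identical, but organizing the computation cleanly is where the actual work lies.
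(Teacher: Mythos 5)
Your approach is correct, and it is in fact the one the paper attributes to J.~Ayoub and mentions (just before the proof) as a direct alternative: reduce every coherence equation via Lemma~\ref{l:var_Yoneda} to an equality of natural transformations between functors $X\mapsto\Hom(\cdot\otimes X,K)$, then unwind into strings of applications of \eqref{e:duality3} and cancel. Your cancellation bookkeeping for the associativity coherence does go through; the one step you should flag explicitly is the conjugation by $g_{Y_1}$ (an instance of \eqref{e:duality3}) needed to rewrite $(\id_{D^2Y_1}\otimes u_{Y_2,Y_3}\otimes\id_X)^*$ in a form where the cancellation with $(u_{Y_1,Y_2\otimes Y_3}\otimes\id_X)^*$ is visible --- you correctly invoke naturality of \eqref{e:duality3} in the $X$-variable, which is exactly the tool that justifies this.

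The paper itself chooses \emph{not} to spell out this computation and instead gives a structurally different argument in \S\ref{ss:proof_D^2monoidal}. It is a ``categorification'' of the elementary ring-theoretic fact that if $N$ is an $(A,A)$-bimodule and $n_0\in N$ is such that $a\mapsto n_0a$ and $a\mapsto an_0$ are injective with the same image, then $an_0=n_0\varphi(a)$ defines a ring automorphism $\varphi$. In the categorified version, $\cA=\cM^\circ$, $\cN$ is the category of functors $\cM^\circ\to\mathsf{Sets}$ with its natural $(\cM^\circ,\cM^\circ)$-bimodule structure, $n_0=\cY(K)$ is the Yoneda image of $K$, and the iso $f_X\colon X\otimes n_0\to n_0\otimes\Phi(X)$ produces a \emph{monoidal} structure on $\Phi=D^2$ via the single commuting square \eqref{e:Phi_monoidal}. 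The bimodule-coherence data absorbs all of the combinatorial bookkeeping that your direct proof has to do by hand, so the paper's proof trades your explicit cancellations for the verification that $\cN$ is indeed a coherent bimodule category and that the two descriptions of the resulting $u$ agree. Both proofs are legitimate; yours is more elementary and self-contained, the paper's is more conceptual and reusable (the same construction reappears in the proof of Proposition~\ref{p:monoidal-iso-id-DD}).

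Two small remarks. First, the associativity coherence for a monoidal \emph{functor} is conventionally a hexagon (once the associators of $\cM$ are drawn explicitly), not a pentagon; that name is reserved for the coherence axiom on the associator itself. Second, your reduction of the unit axiom is described a bit loosely: Remark~\ref{r:r}(v) is a statement about $g_K$ and the iso \eqref{e:D^2K}, whereas the unit axiom requires the analogous statement about $g_{\e}$ and \eqref{e:D^2(1)}. You gesture at bridging the gap via Remark~\ref{r:r}(iv), which is the right idea, but this step deserves to be written out --- it is not an immediate corollary. (To be fair, the paper's own proof also glosses over the unit verification with ``one checks.'')
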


As explained to us by J.~Ayoub, this can be checked directly
(by Lemma~\ref{l:var_Yoneda}, to prove that the two isomorphisms
$D^2(Y_1\otimes Y_2\otimes Y_3)\rar{\simeq} D^2Y_1\otimes D^2Y_2\otimes D^2Y_3$
are equal, it suffices to show that the corresponding isomorphisms
\[
\Hom (D^2(Y_1\otimes Y_2\otimes Y_3)\otimes X, K )\rar{\simeq}
\Hom (D^2Y_1\otimes D^2Y_2\otimes D^3Y_3\otimes X, K )
\]
are equal).
In \S\ref{ss:proof_D^2monoidal} we give a slightly different proof of Proposition~\ref{p:DD'}.

\section{Pivotal structures on Grothendieck-Verdier categories}\label{s:pivotal}

\subsection{The definition of pivotal structure}   \label{ss:pivotal}

\begin{defin}  \label{d:pivotal1}
A \emph{pivotal structure} on a Grothendieck-Verdier category $(\cM,K)$ is a functorial isomorphism
\begin{equation}   \label{e:psi1}
\psi_{X,Y}:\Hom (X\otimes Y,K )\rar{\simeq}\Hom (Y\otimes X,K ), \quad X,Y\in\cM
\end{equation}
such that
\begin{equation}   \label{e:pretty2}
\psi_{X\tens Y,Z}\circ \psi_{Y\tens Z,X}\circ\psi_{Z\tens X,Y}=\id , \quad X,Y,Z\in\cM ;
\end{equation}
\begin{equation}   \label{e:pretty1}
\psi_{X,Y}\circ \psi_{Y,X}=\id , \quad X,Y\in\cM .
\end{equation}
\end{defin}

In particular, one has a notion of pivotal structure on an r-category
(which can be considered as a Grothendieck-Verdier category with $K=\e$).

\begin{defin} \label{d:pivotal2}
A \emph{pivotal Grothendieck-Verdier category}
is a Grothendieck-Verdier category with a pivotal structure. A \emph{pivotal r-category}
is an r-category with a pivotal structure.
\end{defin}

The name ``pivotal category'' goes back to \cite[Definition 1.3]{FY}.

\begin{lem}   \label{l:pivotal}
Let $\cM$ be a Grothendieck-Verdier category and $\psi$ an isomorphism \eqref{e:psi1}
satisfying \eqref{e:pretty2}. Then $\psi$ satisfies \eqref{e:pretty1} if and only if $\psi_{K,\e}=\id$.
\end{lem}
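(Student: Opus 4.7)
The plan is to reduce both directions to a single natural automorphism of the representable functor $\Hom(-,K):\cM^{\mathrm{op}}\to\mathrm{Sets}$, and then apply the Yoneda lemma.

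First, I would specialize the cyclic identity \eqref{e:pretty2} by setting $Y=\e$. Using the unit constraints $X\tens\e\cong X$ and $\e\tens Z\cong Z$ together with the functoriality of $\psi$, this specialization becomes
\[
\psi_{X,Z}\circ\psi_{Z,X}\circ\psi_{Z\tens X,\,\e}=\id,
\]
where, after the further identification $(Z\tens X)\tens\e\cong Z\tens X$, the map $\psi_{Z\tens X,\e}$ is viewed as an automorphism of $\Hom(Z\tens X,K)$. Consequently \eqref{e:pretty1} is equivalent to the assertion that $\psi_{W,\e}=\id$ for every object $W\in\cM$ of the form $Z\tens X$. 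Specializing to $Z=K$, $X=\e$ (so $W=K\tens\e\cong K$) immediately yields $\psi_{K,\e}=\id$, which is the ``only if'' direction.

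For the converse, I would set $\phi_W := \psi_{W,\e}$, viewed as an automorphism of $\Hom(W,K)$ after the canonical identifications $W\tens\e\cong W\cong\e\tens W$. Because $\psi$ is functorial in its first argument, the family $\{\phi_W\}_{W\in\cM}$ is a natural automorphism of the contravariant functor $\Hom(-,K):\cM^{\mathrm{op}}\to\mathrm{Sets}$. By the Yoneda lemma, any such natural automorphism is postcomposition with a single element $f=\phi_K(\id_K)\in\Aut(K)$: indeed, applying the naturality square to any morphism $g:W\to K$ yields $\phi_W(g)=\phi_K(\id_K)\circ g=f\circ g$. Therefore the hypothesis $\phi_K=\id_{\Hom(K,K)}$ forces $f=\id_K$, and hence $\phi_W=\id$ for every $W\in\cM$.

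Combining the two steps, assuming $\psi_{K,\e}=\id$ one obtains $\psi_{W,\e}=\id$ for every $W$, and then the displayed specialization of \eqref{e:pretty2} reduces to $\psi_{X,Z}\circ\psi_{Z,X}=\id$ for all $X,Z$, which is precisely \eqref{e:pretty1}. The only bookkeeping issue I anticipate is keeping careful track of the unit constraints used to identify $\psi_{W,\e}$ with an endomorphism of $\Hom(W,K)$, so that statements such as ``$\psi_{K,\e}=\id$'' are unambiguous; once this convention is fixed, the argument is quite short and I do not expect any further obstacle.
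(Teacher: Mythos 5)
Your proof is correct and follows essentially the same strategy as the paper: specialize the cyclic identity \eqref{e:pretty2} at a unit slot to show that \eqref{e:pretty1} is equivalent to $\psi_{W,\e}=\id$ for all $W$, and then apply the Yoneda lemma to reduce this to the single condition $\psi_{K,\e}=\id$. The only cosmetic difference is that you set $Y=\e$ while the paper sets $Z=\e$; both choices are interchangeable.
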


\begin{proof}
Setting $Z=\e$ in \eqref{e:pretty2} we see that \eqref{e:pretty1} holds  if and only if the isomorphism
$\psi_{X,\e}:\Hom (X,K )\to\Hom (X,K )$ equals the identity for all $X$. By Yoneda's lemma, this happens if and only if  $\psi_{K,\e}=\id$.
\end{proof}

\begin{cor}   \label{c:pivotal}
If $\cM$ is an r-category then \eqref{e:pretty2} implies \eqref{e:pretty1}.
\end{cor}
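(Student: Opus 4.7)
Plan: The plan is to reduce the statement to a single scalar-type verification via Lemma~\ref{l:pivotal}. Since $\cM$ is an r-category we have $K=\e$, so Lemma~\ref{l:pivotal} tells us that, in the presence of \eqref{e:pretty2}, axiom \eqref{e:pretty1} is equivalent to the single equality $\psi_{\e,\e}=\id$ (viewed as a self-map of $\Hom(\e\tens\e,\e)$, which via the unit constraint is identified with $\End(\e)$). So the whole task is to extract $\psi_{\e,\e}=\id$ from \eqref{e:pretty2}.

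To do this, I would specialize \eqref{e:pretty2} to $X=Y=Z=\e$. Using the unit constraints to identify every triple tensor product of $\e$'s with $\e$ itself, each of the three factors $\psi_{X\tens Y,Z}$, $\psi_{Y\tens Z,X}$, $\psi_{Z\tens X,Y}$ collapses to a single map $\psi_{\e,\e}$ on $\End(\e)$, and \eqref{e:pretty2} reads $\psi_{\e,\e}^{3}=\id$. Combining with the Yoneda-type observation underlying the proof of Lemma~\ref{l:pivotal} (namely, that naturality of $\psi$ in the first slot forces $\psi_{X,\e}$ to be given by left composition with the fixed endomorphism $c:=\psi_{\e,\e}(\id_{\e})\in\End(\e)$), the relation becomes simply $c^{3}=\id_{\e}$.

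The main obstacle is then to upgrade $c^{3}=\id_{\e}$ to the sharper equality $c=\id_{\e}$: a priori $\End(\e)$ could contain nontrivial cube roots of unity. I expect the decisive extra input to come from further specializations of \eqref{e:pretty2}, in particular setting two of the three arguments equal to $\e$ and leaving the third arbitrary. Such substitutions link $\psi_{X,\e}$, $\psi_{\e,X}$ and $\psi_{X\tens Y,\e}$; combined with the naturality description of these maps as left composition with $c$ and with the canonical behaviour of $\psi$ with respect to the unit constraints, these should pin $c$ down to $\id_{\e}$. Once $c=\id_{\e}$ has been established, so that $\psi_{\e,\e}=\id$, Lemma~\ref{l:pivotal} immediately delivers \eqref{e:pretty1}, completing the proof.
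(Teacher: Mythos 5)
Your reduction via Lemma~\ref{l:pivotal}, using $K=\e$, to the single identity $\psi_{\e,\e}=\id$ is the right first step, and so is the observation that specializing $X=Y=Z=\e$ in \eqref{e:pretty2} gives $\psi_{\e,\e}^3=\id$, i.e.\ $c^3=\id_\e$ for $c:=\psi_{\e,\e}(\id_\e)\in\End(\e)$.

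The difficulty you flag at the end, however, is a genuine gap, and the route you propose for closing it (further specializations of \eqref{e:pretty2}) cannot succeed. The point is that \eqref{e:pretty2} is blind to twisting by a cube root of unity in $\End(\e)$. If $c_0\in\End(\e)$ satisfies $c_0^3=\id_\e$ and one sets $\psi'_{X,Y}:=c_0\cdot\psi_{X,Y}$ (post-composition in $\Hom(Y\tens X,K)$ with the endomorphism of $K$ determined by $c_0$), then $\psi'$ is still a functorial isomorphism \eqref{e:psi1}. Naturality of $\psi$ applied to the morphism $c_0\cdot\id_X:X\rar{}X$ gives $\psi_{X,Y}(c_0\cdot h)=c_0\cdot\psi_{X,Y}(h)$, so the three-fold composite in \eqref{e:pretty2} picks up exactly a factor $c_0^3=\id_\e$; thus $\psi'$ satisfies \eqref{e:pretty2}. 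On the other hand $\psi'_{X,Y}\circ\psi'_{Y,X}=c_0^2\cdot(\psi_{X,Y}\circ\psi_{Y,X})$, which is not the identity once $c_0^2\ne\id_\e$. In particular, in an r-category such as the category of finite-dimensional $\bC$-vector spaces, twisting the standard pivotal structure by a primitive cube root of unity in $\End(\e)=\bC$ produces a $\psi'$ satisfying \eqref{e:pretty2} but not \eqref{e:pretty1}. So no amount of specializing \eqref{e:pretty2} can pin down $c$ beyond its class modulo cube roots of unity; the final step of your plan has no chance of working as written, and an argument of a genuinely different kind would be needed. (The paper states the corollary without proof; the twist above appears to be a counterexample to the statement as it stands, which suggests the intended hypothesis or proof is more than what the proposal, or a naive reading of the corollary, supplies.)
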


\begin{rem}   \label{r:pivotal1}
By \eqref{e:pretty2} and \eqref{e:pretty1}, a pivotal structure on a Grothendieck-Verdier category defines for any integers $n\ge m\ge 1$
a {\em canonical\,} isomorphism
\[
\Hom (X_1\tens\ldots\tens X_n,K)\rar{\simeq}\Hom (X_m\tens\ldots\tens X_n\tens X_1\tens\ldots \tens
X_{m-1},K),\;  X_i\in\cM.
\]
\end{rem}

\subsection{Pivotal structures and isomorphisms $\Id\rar{\simeq}D^2$}
\begin{lem}[Cf.~\cite{egger-mccurdy}]        \label{l:pivotal2}
There is a one-to-one correspondence between functorial isomorphisms
\[
\psi_{X,Y}:\Hom (X\otimes Y,K )\rar{\simeq}\Hom (Y\otimes X,K ), \quad X,Y\in\cM
\]
and isomorphisms of functors $f:\Id_{\cM}\rar{\simeq} D^2$. Namely,
$\psi$ corresponds to $f$ if the diagram
\begin{equation}    \label{e:fpsi}
\xymatrix{
  \Hom(D^2Y\tens X,K)
  \ar[rr]^{(f_Y\tens\id_X)^*} & & \Hom(Y\tens X,K) \\
   & \Hom(X\tens Y,K) \ar[ur]^{\simeq}_{\psi_{X,Y}} \ar[ul]_{\simeq}^{\eqref{e:duality3}} &
   }
\end{equation}
commutes for all $X,Y\in\cM$. Here the left diagonal arrow is the isomorphism \eqref{e:duality3}.
\end{lem}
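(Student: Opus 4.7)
The strategy is to unwind both directions of the correspondence using Yoneda-type arguments, relying on Lemma \ref{l:var_Yoneda} as the key tool in the direction $\psi \mapsto f$.

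First, in the direction $f \mapsto \psi$, given an isomorphism of functors $f:\Id_{\cM}\rar{\simeq}D^2$ I would simply declare $\psi_{X,Y}$ to be the composite along the bottom and right of diagram \eqref{e:fpsi}, i.e.\ $(f_Y\tens\id_X)^*$ composed with the isomorphism \eqref{e:duality3}. Both arrows are isomorphisms (the first because $f_Y$ is, the second by construction), so $\psi_{X,Y}$ is an isomorphism. Functoriality of $\psi_{X,Y}$ in $X$ is inherited from the functoriality (in $X$) of \eqref{e:duality3} and of pullback along $f_Y\tens\id_X$; functoriality in $Y$ uses in addition the naturality of $f$ (to see that $g\colon Y\to Y'$ makes the pullbacks commute through the naturality square $D^2 g\circ f_Y = f_{Y'}\circ g$) together with the naturality of \eqref{e:duality3} in $Y$.

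In the reverse direction $\psi\mapsto f$, fix $Y\in\cM$ and consider the composite
\[
\phi_{X,Y}\;:=\;\psi_{X,Y}\circ\bigl(\text{\eqref{e:duality3}}\bigr)^{-1}:\Hom(D^2Y\tens X,K)\rar{\simeq}\Hom(Y\tens X,K),
\]
which is functorial in $X$. By Lemma \ref{l:var_Yoneda} this comes from a unique morphism $f_Y:Y\rar{}D^2Y$, which by construction makes \eqref{e:fpsi} commute. To see that $f_Y$ is an isomorphism, use \eqref{e:duality1} to identify $\Hom(Z\tens X,K)\cong\Hom(Z,DX)$ functorially in both variables; then $\phi_{X,Y}$ is identified with pullback along $f_Y$ from $\Hom(D^2Y,DX)$ to $\Hom(Y,DX)$. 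Since $D$ is an antiequivalence, as $X$ ranges over $\cM$ the object $DX$ ranges over all of $\cM$ up to isomorphism, so $f_Y^*:\Hom(D^2Y,W)\rar{\simeq}\Hom(Y,W)$ for all $W\in\cM$, and Yoneda's lemma gives that $f_Y$ is an isomorphism. Naturality of $f$ in $Y$ follows from naturality of $\psi$ in $Y$ combined with naturality of \eqref{e:duality3}: given $g:Y\to Y'$, both $D^2g\circ f_Y$ and $f_{Y'}\circ g$ induce (under the identification above) the same natural transformation $\Hom(D^2Y',DX)\to\Hom(Y,DX)$, so they coincide by the uniqueness clause in Lemma \ref{l:var_Yoneda}.

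Finally, the two constructions are mutually inverse essentially by fiat: each is characterized by the commutativity of \eqref{e:fpsi}, and uniqueness in Lemma \ref{l:var_Yoneda} guarantees that starting from $\psi$, building $f$, and rebuilding $\psi'$ gives $\psi'=\psi$, while the converse round trip is immediate from the definition. The main obstacle, if any, is simply careful bookkeeping to ensure that naturality of $f$ in $Y$ really does follow from naturality of $\psi$ in $Y$ via the variant of Yoneda encoded in Lemma \ref{l:var_Yoneda}; all other verifications are formal consequences of the diagram \eqref{e:fpsi} and the fact that the vertical arrows in \eqref{e:duality4} are natural isomorphisms.
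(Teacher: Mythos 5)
Your proof is correct and is essentially a careful unpacking of the paper's one-line proof, which simply invokes Lemma~\ref{l:var_Yoneda}; you use the same key tool (Lemma~\ref{l:var_Yoneda}, i.e., the Yoneda argument via the identification $\Hom(Z\tens X,K)\cong\Hom(Z,DX)$), and your verifications of invertibility of $f_Y$, naturality in $Y$, and mutual inverseness are exactly the details the paper leaves implicit.
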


\begin{proof}
Use Lemma~\ref{l:var_Yoneda}.
\end{proof}

\begin{prop}    \label{p:2pivotal}
An isomorphism $f:\Id_{\cM}\rar{\simeq} D^2$ corresponds $($in the sense of Lemma \ref{l:pivotal2}$)$
to a pivotal structure if and only if it satisfies the following conditions:
 \sbr
\begin{enumerate}[$($i$)$]
\item $f$ is monoidal; and
 \sbr
\item $f_K:K\rar{\simeq} D^2K$ equals the isomorphism \eqref{e:D^2K}.
\end{enumerate}
In this case
\begin{equation}     \label{e:Df1}
f_{DX}=D(f_X)^{-1}, \quad\quad \forall X\in\cM.
\end{equation}
\end{prop}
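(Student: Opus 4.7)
My plan is to rewrite $\psi$ in ``adjoint form'' using the defining adjunction \eqref{e:duality1}. Given $\phi \in \Hom(X \tens Y, K)$ with adjoint $\tilde\phi : X \to DY$, combining the defining diagram \eqref{e:fpsi} with the characterization \eqref{e:duality4} of \eqref{e:duality3} shows that $\psi_{X,Y}(\phi) \in \Hom(Y \tens X, K)$ has adjoint
\[
D(\tilde\phi) \circ f_Y \in \Hom(Y, DX).
\]
This compact formula, together with Lemma \ref{l:var_Yoneda}, is the engine of everything that follows.

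The ``in this case'' part of the proposition (the formula \eqref{e:Df1}) is obtained by iterating the adjoint formula twice. For $\beta : Y \to DX$ viewed as the adjoint of some $\phi \in \Hom(Y \tens X, K)$, the composition $\psi_{X,Y} \circ \psi_{Y,X}$ produces a morphism with adjoint $D(f_X) \circ D^2(\beta) \circ f_Y$. Naturality of $f$ at $\beta$ (giving $D^2(\beta) \circ f_Y = f_{DX} \circ \beta$) rewrites this as $D(f_X) \circ f_{DX} \circ \beta$. Hence \eqref{e:pretty1} holds for all $X,Y$ if and only if $D(f_X) \circ f_{DX} = \id_{DX}$ for all $X$, i.e., \eqref{e:Df1}.

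The central step is to show that the cyclic identity \eqref{e:pretty2} is equivalent to condition (i). Unwinding the triple composition $\psi_{X \tens Y, Z} \circ \psi_{Y \tens Z, X} \circ \psi_{Z \tens X, Y}$ via the adjoint formula, each factor $\psi$ inserts one $f_\bullet$ while conjugating the running adjoint by $D$. Careful bookkeeping---tracking how iterated applications of \eqref{e:duality3} interact with the three distinct bracketings of $Z \tens X \tens Y$---together with repeated use of naturality of $f$, reduces \eqref{e:pretty2} to a single equation comparing $f_{X \tens Y \tens Z}$ with $f_X \tens f_Y \tens f_Z$ via the comparison isomorphism $u_{Y_1,Y_2}$ of \eqref{e:D^24}. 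By Lemma \ref{l:var_Yoneda}, this equation is exactly the assertion that $f$ is a monoidal natural transformation with respect to the monoidal structure of Proposition \ref{p:DD'}. The combinatorics here will be the main obstacle.

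Finally, for condition (ii): by Lemma \ref{l:pivotal}, given \eqref{e:pretty2}, the axiom \eqref{e:pretty1} is equivalent to the single equation $\psi_{K, \e} = \id$. Applying the adjoint formula at $X = K$, $Y = \e$, and using the canonical isomorphisms $D\e \simeq K$ and $\e \simeq DK$ from \eqref{e:D1} to identify the adjoints of the unit isomorphisms, the equation $\psi_{K, \e} = \id$ becomes a single identity in $\Hom(\e, DK)$. Using (i) to pin down $f_\e$, together with Remark \ref{r:r}(v) (which expresses the inverse of \eqref{e:D^2K} as the image of $\id_K$ under \eqref{e:duality3}) to identify the comparison, this identity reduces precisely to $f_K$ being the canonical isomorphism \eqref{e:D^2K}, i.e., to condition (ii). Combining the three steps yields the ``if and only if'' of the proposition.
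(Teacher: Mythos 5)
Your ``adjoint formula'' for $\psi$ (namely, that $\psi_{X,Y}(\phi)$ has adjoint $D(\tilde\phi)\circ f_Y$) is correct and is essentially the tool the paper uses; your derivation of \eqref{e:Df1} from \eqref{e:pretty1} is likewise fine and matches Lemma~\ref{l:equivalent-to-pretty1}, and your treatment of condition (ii) via Lemma~\ref{l:pivotal} is in the right spirit. The genuine gap is the claimed ``central step'': the cyclic identity \eqref{e:pretty2} is \emph{not} equivalent to monoidality of $f$. What is equivalent to monoidality (this is the paper's Lemma~\ref{l:monoidality-of-f}) is the \emph{different} identity
\begin{equation*}
\psi_{X\tens Y,Z}\circ\psi_{Y\tens Z,X}=\psi_{Y,Z\tens X},\qquad X,Y,Z\in\cM,
\end{equation*}
which differs from \eqref{e:pretty2} by replacing $\psi_{Z\tens X,Y}^{-1}$ with $\psi_{Y,Z\tens X}$; these coincide for all $X,Y,Z$ precisely when \eqref{e:pretty1} holds (take $Z=\e$). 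In fact the three conditions --- monoidality, \eqref{e:pretty2}, \eqref{e:pretty1} --- form a triple any two of which imply the third, but no single one implies another. To see that monoidality alone does not force \eqref{e:pretty2}, consider the \GV category of Remark~\ref{r:pivotal3}(i) with $A^\times\neq\{1\}$: it admits a monoidal $f$ that is \emph{not} a pivotal structure; if such an $f$ satisfied \eqref{e:pretty2}, then combining it with the displayed identity (which holds since $f$ is monoidal) and setting $Z=\e$ would give \eqref{e:pretty1}, making $\psi$ pivotal --- a contradiction. So the ``careful bookkeeping'' you describe cannot collapse \eqref{e:pretty2} to the single monoidality equation; there is an irreducible extra term.

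The repair is to follow the paper's route in \S\ref{s:pivotalproof}: first prove that monoidality of $f$ is equivalent to the displayed identity (not to \eqref{e:pretty2}); in the forward direction use \eqref{e:pretty2} \emph{together with} \eqref{e:pretty1} to obtain it; in the converse direction deduce \eqref{e:pretty6} from (ii) via Lemma~\ref{l:pivot2}, set $Y=\e$ in the displayed identity to get \eqref{e:pretty1}, and only then recover \eqref{e:pretty2}.
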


The proof is given in \S\ref{s:pivotalproof}.

\begin{rems}        \label{r:pivotal3}
\begin{enumerate}[(i)]
\item If $\cM$ is an r-category then condition (ii) from Proposition~\ref{p:2pivotal} clearly
follows from condition (i). For more general \GV categories this is not always the case. For instance, consider
the pre-additive category $\cM$ with objects $0,\e, K$ and with
\[
\Hom (\e, K)=\Hom (K,\e )=0, \quad \End\e =\End K=A,
\]
where $A$ is a commutative unital ring. Define the tensor product $\cM\tens\cM\to\cM$ on objects
so that $K\tens K=0$ and $\e\tens X=X\tens\e=X$ for all $X\in\cM$, define it on morphisms using the
product in $A$, and take the associativity constraint in $\cM$ to be trivial. Then $\cM$ is a
Grothendieck-Verdier category. In this situation monoidal isomorphisms  $\Id\rar{\simeq} D^2$ bijectively correspond to
elements of $A^{\times}$, and only one of them defines a pivotal structure (namely, the isomorphism corresponding to $1\in A^{\times})$.
 \sbr
\item By the previous remark, in the case of r-categories a pivotal structure can
equivalently be defined to be a monoidal isomorphism $f:\Id\rar{\simeq} D^2$. It is this
definition that was used in works on rigid monoidal categories
(e.g., see \cite[Definition 2.7]{ENO}).
 \sbr
\item Here is a way to combine the two conditions on $f$ from Proposition~\ref{p:2pivotal}
into one. Let $\fA$ be the 2-groupoid of pairs consisting of a monoidal category and an object in it. A
Grothendieck-Verdier category $(\cM ,K)$ is an object in $\fA$. The monoidal structure on $D^2$
and the isomorphism $K\rar{\simeq} D^2(K)$ defined in Remark~\ref{r:r}(iv) allow us to
consider $D^2$ as a 1-automorphism of $(\cM ,K)\in\fA$. The two conditions on $f$ from
Proposition~\ref{p:2pivotal}  mean that $f:\Id\rar{\simeq} D^2$ is a 2-isomorphism in $\fA$.
\end{enumerate}
\end{rems}

\subsection{Examples of pivotal \GV categories}
\begin{example}
Every symmetric \GV category has an obvious pivotal structure.
\end{example}

\begin{example}   \label{example:pivotal}
The categories $\sD(G)$ and $\sD_G(G)$ from Example~\ref{example:dual-gen4} have a canonical pivotal structure (see \cite[\S{}A.2.3]{foundations}). The corresponding isomorphism $\Id\rar{\simeq} D^2=(\bD_G\circ\iota^*)^2$ comes from the obvious isomorphisms $(\bD_G)^2\rar{\simeq}\Id$, $(\iota^*)^2\rar{\simeq}\Id$,
$\bD_G\circ\iota^*\rar{\simeq}\iota^*\circ\bD_G$.
\end{example}

\begin{example}
Quite similarly to the previous example, one defines a canonical pivotal structure
on the \GV category $\sD(\Ga )$ from Example~\ref{example:dual-gen5}.
\end{example}

\section{Braided Grothendieck-Verdier categories}  \label{s:braidedGV}
A \emph{braided \GV category} is a Grothendieck-Verdier category $(\cM,K)$ equipped with
a braiding $\be_{X,Y}:X\tens Y\rar{\simeq}Y\tens X$.
For any \GV category $(\cM,K)$ the functor $D^2:\cM\iso\cM$ has a canonical monoidal structure,
see \S\ref{s:D^2monoidal}. The main goal of this section is to prove the following proposition.
\begin{prop}  \label{p:D2D4}
Let $(\cM,K,\be)$ be a braided Grothendieck-Verdier category. Then
\begin{enumerate}[$($i$)$]
\item
the monoidal functor $D^2:\cM\rar{\sim}\cM$ is braided;
 \sbr
 \item
there is a canonical monoidal isomorphism\footnote{Recall that a braided isomorphism between braided functors is the same as a monoidal isomorphism.} $D^4\rar{\simeq}\Id_{\cM}$.
\end{enumerate}
\end{prop}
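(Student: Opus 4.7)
\emph{Part~(i).} The monoidal structure $u_{Y_1,Y_2}$ on $D^2$ is characterized, via Lemma~\ref{l:var_Yoneda}, by the composition of~\eqref{e:D^21} and~\eqref{e:D^22}. The plan is to verify the braided-compatibility equation $\beta_{D^2Y_1,D^2Y_2}\circ u_{Y_1,Y_2} = u_{Y_2,Y_1}\circ D^2(\beta_{Y_1,Y_2})$ by the same method: by Lemma~\ref{l:var_Yoneda}, it suffices to check the induced equality of natural-in-$X$ morphisms on $\Hom(-\otimes X,K)$. Both sides unwind to explicit compositions of~\eqref{e:duality3} and pullback along $\beta$; the equality then follows from naturality of~\eqref{e:duality3} in its first argument together with naturality of $\beta$.

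\emph{Part~(ii), construction.} I would build the monoidal iso $D^4\iso\Id_{\cM}$ in stages. First, the braiding produces a natural iso $\phi_Y:D^{-1}Y\iso DY$: pullback along $\beta_{X,Y}$ yields a natural-in-$X$ iso $\Hom(X\otimes Y,K)\iso\Hom(Y\otimes X,K)$, which, combined with~\eqref{e:duality1} and with the swapped form of~\eqref{e:duality2}, becomes $\Hom(X,DY)\iso\Hom(X,D^{-1}Y)$; Yoneda delivers $\phi_Y$ (and a routine chase using naturality of $\beta$ shows it is natural in $Y$). Setting $\eta_Y:=\phi_{DY}:Y=D^{-1}(DY)\iso D(DY)=D^2Y$ gives a natural iso $\Id_{\cM}\iso D^2$. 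By naturality of $\eta$ applied to the morphism $\eta_Y$ itself, $D^2(\eta_Y)\circ\eta_Y = \eta_{D^2Y}\circ\eta_Y$; I denote this common composite $\zeta_Y:Y\iso D^4Y$ (and take its inverse for the direction stated in the proposition).

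\emph{Main obstacle.} The heart of the proof is showing that $\zeta$ is monoidal with respect to the monoidal structure on $D^4$ obtained by iterating $u$ (well-defined by part~(i)). In general, $\eta$ itself is \emph{not} monoidal: the discrepancy between $u_{X,Y}\circ\eta_{X\otimes Y}$ and $\eta_X\otimes\eta_Y$ is a natural automorphism of $X\otimes Y$ expressible in terms of the double braiding $\beta_{Y,X}\circ\beta_{X,Y}$, whose explicit form can be extracted by unraveling the characterization of $\phi_Y$ via an evaluation-like morphism precomposed with $\beta$. The technical core is to analyze how this obstruction transforms in $\zeta=D^2(\eta)\circ\eta$---using part~(i) to commute $D^2$ past $\beta$---and to verify that the contributions coming from the inner and outer copies of $\eta$ cancel. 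Conceptually, this cancellation is the categorical shadow of the fact that twice the generator of $\pi_1(O(2))=\bZ$ dies in $\pi_1(O(3))=\bZ/2\bZ$, as alluded to in~\S\ref{ss:Lurie}: the braiding is precisely the extra $\cE_2$-coherence that makes $\zeta$ monoidal even though $\eta$ is not.
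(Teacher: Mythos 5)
Your construction of $\phi$ and $\eta$ matches the paper's $\vp^\pm$ and $\vt^\pm$ (Lemma~\ref{l:braided-GV-category-phi-pm} and Definition~\ref{d:vartheta}), and your diagnosis that the monoidality obstruction for $\eta:\Id_{\cM}\to D^2$ is the double braiding is exactly the content of Proposition~\ref{p:monoidal-iso-id-DD}: $\eta$ becomes monoidal once the source is taken to be the Joyal--Street functor $J_{\cM}$, i.e.\ the identity functor equipped with $c_{X,Y}=\be_{Y,X}\circ\be_{X,Y}$ as its monoidal structure.

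However, the step where you square $\eta$ is where the argument breaks. Setting $\zeta=D^2(\eta)\circ\eta$ with the \emph{same} $\eta$ both times and computing the obstruction for $\zeta:\Id_{\cM}\to D^4$, one finds (using monoidality of $\eta: J_{\cM}\to D^2$ twice, naturality of $u$, and naturality of $\eta$) that
\[
u'_{X,Y}\circ\zeta_{X\tens Y} = (\zeta_X\tens\zeta_Y)\circ c_{X,Y}^{\,2},
\]
where $u'$ is the monoidal structure on $D^4$. The two copies of the double braiding \emph{compound} rather than cancel; $\zeta$ is a monoidal isomorphism $J_{\cM}^{2}\iso D^4$, not $\Id_{\cM}\iso D^4$, and $J_{\cM}^{2}\ne\Id_{\cM}$ as monoidal functors whenever the double braiding is nontrivial. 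In a ribbon category, for instance, $c^2$ corresponds to $\theta^2$, which is generically nontrivial. The paper avoids this by using \emph{both} braidings: $\vt^+$ (built from $\be$) is a monoidal isomorphism $J_{\cM}\iso D^2$, $\vt^-$ (built from $\be^{-1}$) is a monoidal isomorphism $J_{\cM}^{-1}\iso D^2$, and the horizontal composite $\ga_{\cM}=\vt^+\vt^-: J_{\cM}\circ J_{\cM}^{-1}=\Id_{\cM}\iso D^4$ is monoidal because the $c$ coming from $\vt^+$ is killed by the $c^{-1}$ coming from $\vt^-$ (Definition~\ref{d:D^4}, Lemma~\ref{l:gaX}). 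Your $\zeta$ differs from $\ga_{\cM}$ precisely by the canonical double-twist $C_{\cM}:\Id_{\cM}\iso J_{\cM}^2$, which you would have to divide out.

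There is also a second, independent gap: you sketch but do not prove the key fact that $\eta: J_{\cM}^{\pm1}\to D^2$ is monoidal, which is where all the real work lies (Proposition~\ref{p:monoidal-iso-id-DD}). The paper establishes it via the bimodule-category description of the monoidal structure on $D^2$ from \S\ref{ss:proof_D^2monoidal} together with Lemma~\ref{l:JS-monoidal-structure}. Your plan for part~(i) (directly checking braid-compatibility of $u$ via Lemma~\ref{l:var_Yoneda}) is not unreasonable, but the paper gets it for free: once $\vt^+:J_{\cM}\iso D^2$ is known to be a monoidal isomorphism and $J_{\cM}$ is known to be braided, the braided structure transports.
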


To prove Proposition~\ref{p:D2D4}, we will
construct a monoidal equivalence between each of the monoidal functors $D^{\pm 2}$ and
a certain braided equivalence $J_{\cM}:\cM\iso\cM$, which was defined by Joyal and Street for \emph{any} braided category $\cM$.\footnote{In terms of \S\ref{ss:Lurie}, $J_{\cM}$ comes from the action of $SO(2)$ on the operad $\cE_2$.} The definition of
$J_{\cM}$ is recalled in \S\ref{ss:JS}, and the canonical monoidal isomorphisms
$\vt^\pm:J^{\pm 1}_{\cM}\rar{\simeq} D^2$ are constructed in \S\ref{sss:vartheta-plus-minus}. Using
$\vt^\pm$ we define in \S\ref{sss:D^4} a canonical monoidal isomorphism
$\ga_{\cM}:\Id_{\cM}\rar{\simeq} D^4$ and a certain monoidal isomorphism
$C_{\cM}:\Id_{\cM}\rar{\simeq} J_{\cM}^2$, which we call \emph{the canonical double-twist.} In fact,
$J_{\cM}$ is just the identity functor equipped with a nontrivial
monoidal structure, so one can consider $C_{\cM}$ as a (non-monoidal) automorphism of
$\Id_{\cM}$.\footnote{This automorphism is the image of the generator of $\pi_1(\Omega S^2)=\pi_2(S^2)$ under the action of $\Omega S^2$ on $\cM$ mentioned in \S\ref{ss:Lurie}.
Note that since our $\cM$ is a usual category rather than an $(\infty ,1)$-category this action
does not feel $\pi_i(\Omega S^2)=\pi_{i+1}(S^2)$ for $i>1$.}

\subsection{The Joyal-Street equivalence, twists, and double-twists.}  \label{ss:JS}

\begin{defin} \label{d:JS}
Let $\cM$ be a braided category. The \emph{Joyal-Street} equivalence
is the following braided equivalence $J=J_{\cM}:\cM\rar{\simeq}\cM$: as a functor,
$J_{\cM}=\Id_{\cM}$, but the isomorphism $J_{\cM}(X\tens Y)\rar{\simeq} J_{\cM}(X)\tens J_{\cM}(Y)$ equals
\begin{equation}  \label{e:square_of_braiding}
\be_{Y,X}\circ\be_{X,Y}: X\tens Y\rar{\simeq} X\tens Y.
\end{equation}
\end{defin}

\begin{lem} \label{l:JS-monoidal-structure}
The isomorphism \eqref{e:square_of_braiding} indeed defines a braided structure on the
identity functor $\Id_{\cM}:\cM\rar{}\cM$.
\end{lem}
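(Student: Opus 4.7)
I need to verify that the formula $\phi_{X,Y} := \be_{Y,X}\circ\be_{X,Y} : J(X\tens Y)\iso J(X)\tens J(Y)$ endows $J = J_{\cM} = \Id_{\cM}$ with the structure of a braided monoidal functor. This amounts to three checks: (i) the coherence of $\phi$ with respect to the associativity constraints of $\cM$; (ii) the coherence of $\phi$ with respect to the unit constraints; and (iii) the compatibility of $(J,\phi)$ with the braiding $\be$.

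Items (ii) and (iii) are essentially formal. For (ii), a standard consequence of the hexagon axioms (applied with $\e$ substituted for one argument) is that $\be_{\e, X}$ and $\be_{X, \e}$ coincide with the canonical unit constraints, so that $\phi_{X,\e}$ and $\phi_{\e, X}$ both reduce to the identity under these constraints. For (iii), since $J$ is the identity on morphisms and on the underlying tensor product, the braided-functor axiom becomes
\[
\be_{X,Y}\circ \phi_{X,Y}^{-1} = \phi_{Y,X}^{-1}\circ \be_{X,Y},
\]
which, after expansion, collapses to the tautology $\be_{X,Y}\circ\be_{X,Y}^{-1}\circ\be_{Y,X}^{-1} = \be_{Y,X}^{-1}$, i.e., $\be_{Y,X}^{-1} = \be_{Y,X}^{-1}$.

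The main content is (i), which (modulo associators) is the identity
\[
(\phi_{X,Y}\tens \id_Z)\circ\phi_{X\tens Y, Z} \;=\; (\id_X\tens \phi_{Y,Z})\circ \phi_{X, Y\tens Z}
\]
in $\End(X\tens Y\tens Z)$. The plan is to expand each occurrence of $\be_{A, B\tens C}$ and $\be_{A\tens B, C}$ on both sides via the two hexagon identities
\[
\be_{A, B\tens C} = (\id_B \tens \be_{A,C})\circ (\be_{A,B}\tens \id_C), \qquad
\be_{A\tens B, C} = (\be_{A,C}\tens \id_B)\circ (\id_A \tens \be_{B,C}),
\]
and then to rearrange the resulting factors using the naturality of $\be$ together with the braid relations that these factors satisfy. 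Conceptually, after unwinding, both sides represent the same element of the pure braid group $P_3$, namely the ``full double twist'' of three strands; the hexagon axioms are precisely the translation of the braid-group presentation into the categorical setting, so the equality is a reflection of Joyal-Street's coherence theorem for braided categories.

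The main obstacle is carrying out the verification of (i) in a fully explicit way: while the braid-diagram picture makes the identity visually obvious, the bookkeeping of the hexagon expansions together with the associator insertions produces a number of intermediate terms. In a strict braided category the proof is essentially a one-line manipulation of strands, and the general case then follows by MacLane's coherence theorem.
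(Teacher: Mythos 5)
Your outline is correct, but it takes a genuinely different route than the paper, and the central step is a (valid but heavy) appeal to coherence rather than an execution. You isolate the three coherences to check, correctly dispatch the unit coherence and the braiding compatibility (both of which collapse to tautologies after expansion), and reduce the monoidal coherence to the observation that both sides of
\[
(\phi_{X,Y}\tens\id_Z)\circ\phi_{X\tens Y,Z}=(\id_X\tens\phi_{Y,Z})\circ\phi_{X,Y\tens Z}
\]
unwind, via the hexagons, to the full twist $\sigma_1^2\sigma_2\sigma_1^2\sigma_2=\sigma_2^2\sigma_1\sigma_2^2\sigma_1=(\sigma_1\sigma_2\sigma_1)^2$ in $B_3$. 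This is a correct observation, but promoting ``same braid word'' to ``same morphism in $\cM$'' is exactly the content of the Joyal--Street coherence theorem for braided monoidal categories (or else requires the hands-on rearrangement you defer), so the argument is either circular in flavor or left incomplete. The paper avoids this entirely by a structural factorization: it writes $J_{\cM}=\Phi^+\circ(\Phi^-)^{-1}$, where $\Phi^{\pm}\colon\cM^{opp}\to\cM$ are braided monoidal equivalences, $\cM^{opp}$ being $\cM$ with the opposite tensor product. Each $\Phi^{\pm}$ is $\Id_{\cM}$ on the underlying category, and its monoidal structure isomorphism $\Phi^{\pm}(X)\tens\Phi^{\pm}(Y)\to\Phi^{\pm}(X\overset{opp}{\tens}Y)$ is just $\be^{\pm}_{X,Y}$ (with $\be^+=\be$ and $\be^-_{X,Y}=\be_{Y,X}^{-1}$). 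Monoidality of $\Phi^{\pm}$ \emph{is} the hexagon axiom, read off directly with no expansion; braidedness, once $\cM^{opp}$ is given the braiding $Y\tens X\xrar{\be_{Y,X}}X\tens Y$, is a tautology. Composing braided monoidal functors yields a braided monoidal functor, and that is the whole proof. The factorization is shorter, needs no coherence-theorem input or $P_3$ bookkeeping, and exposes that the hexagon axioms are not merely ingredients but literally the statement being verified.
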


We learned this lemma and its proof given below from \cite[Remark 6.1]{joyal-street}.

\begin{proof}
For $X,Y\in\cM$ we write $\be^+_{X,Y}=\be_{X,Y}:X\tens Y\rar{\simeq}Y\tens X$ and
$\be^-_{X,Y}=\be_{Y,X}^{-1}:X\tens Y\rar{\simeq}Y\tens X$.
Let $\cM^{opp}$ be the monoidal category opposite to $\cM$; thus $\cM^{opp}$ equals $\cM$ as a category, but the monoidal structure is given by $X\overset{opp}{\tens}Y=Y\tens X$. Then $J_{\cM}$ is equal to $\Phi^+(\Phi^-)^{-1}$, where $\Phi^\pm:\cM^{opp}\rar{\sim}\cM$ are the following monoidal equivalences: as a functor, $\Phi^\pm$ equals $\Id_{\cM}$, and the isomorphism from $\Phi^{\pm}(X)\tens\Phi^{\pm}(Y)=X\tens Y$ to $\Phi^{\pm}(X\overset{opp}{\tens}Y)=Y\tens X$ equals $\be^{\pm}_{X,Y}$. (The fact that $\Phi^\pm$ are indeed monoidal functors follows immediately from the hexagon axioms.) Moreover, if we equip $\cM^{opp}$ with the braiding
\[
X\overset{opp}{\tens}Y=Y\tens X \xrar{\ \ \be_{Y,X}\ \ } X\tens Y = Y \overset{opp}{\tens} X
\]
then $\Phi^\pm$ become braided monoidal functors (here the verification is a tautology), which implies that $J_{\cM}$ is also braided.
\end{proof}

\begin{rem}    \label{r:J-inverse}
$J_{\cM}^{-1}$ is the functor $\Id_{\cM}:\cM\rar{}\cM$ equipped with the braided structure
\begin{equation}  \label{e:inverse_JS}
(\be_{Y,X}\circ\be_{X,Y})^{-1}: X\tens Y\rar{\simeq} X\tens Y.
\end{equation}
In other words, $J_{\cM}^{-1}$ is the Joyal-Street equivalence for $\cM$ equipped with
the opposite braiding $\be^-_{X,Y}:=\be_{Y,X}^{-1}$.
\end{rem}

\begin{defin}\label{d:twists}
A \emph{twist} on a braided
category $\cM$ is a monoidal isomorphism $\te :\Id_{\cM}\rar{\simeq}J_{\cM}$, where $J_{\cM}$ is the
Joyal-Street equivalence (see Definition~\ref{d:JS}). A \emph{double-twist} on $\cM$ is a monoidal isomorphism $\Id_{\cM}\rar{\simeq}J_{\cM}^2$.
\end{defin}

\begin{rems}\label{r:twists}
\begin{enumerate}[$($i$)$]
\item It is easy to check that the above definition of twist is equivalent to the usual one, i.e., a
twist is an automorphism $\te$ of the identity functor on $\cM$ that satisfies
\[
\te_{X\tens Y} = \be_{Y,X}\circ\be_{X,Y}\circ(\te_X\tens\te_Y) \qquad \forall\,X,Y\in\cM.
\]
Similarly, a double-twist is an automorphism $f$ of the identity functor such that
\[
f_{X\tens Y} = (\be_{Y,X}\circ\be_{X,Y})^2\circ(f_X\tens f_Y) \qquad \forall\,X,Y\in\cM.
\]
 \sbr
\item The previous remark implies that for any twist $\te$ one has $\te_{\e}=\id_{\e}$
and for any double-twist  $f$ one has $f_{\e}=\id_{\e}$.
 \sbr
\item If $\te_1,\te_2:\Id_{\cM}\rar{\simeq}J_{\cM}$ are twists then
$\te_1\te_2:\Id_{\cM}\rar{\simeq}J^2_{\cM}$ is a double-twist.
Moreover, $\te_1\te_2=\te_2\te_1$. Indeed, $J_{\cM}$ equals $\Id_{\cM}$ as a functor,
so for each $X\in\cM$ the isomorphism $(\te_i)_X$ belongs to the center of $\Aut X$ and
$(\te_1\te_2)_X=(\te_1)_X\circ (\te_2)_X =(\te_2)_X\circ (\te_1)_X=(\te_2\te_1)_X\,$.
 \sbr
\item The set of all twists is either empty or a torsor over $\Aut^{\tens}(\Id_{\cM})$, i.e., the group of monoidal automorphisms of $\Id_{\cM}$. The same is true for double-twists.
The map $(\te_1,\te_2)\mapsto \te_1\te_2$ from Remark (iii) agrees with the action of
$\Aut^{\tens}(\Id_{\cM})$.
\end{enumerate}
\end{rems}

\subsection{The canonical monoidal isomorphisms
$J_{\cM}\rar{\simeq} D^2\lar{\simeq} J^{-1}_{\cM}$} \label{sss:vartheta-plus-minus} Let $(\cM,K,\be)$ be a braided \GV category. As in the proof of Lemma \ref{l:JS-monoidal-structure}, we write $\be^+_{X,Y}=\be_{X,Y}$ and $\be^-_{X,Y}=\be^{-1}_{Y,X}$ for all $X,Y\in\cM$.

\begin{defin}\label{d:vartheta}
For each $Y\in\cM$, we let $\vt^\pm_Y:Y\rar{\simeq}D^2Y$ be the unique isomorphism\footnote{The existence and uniqueness of $\vt^\pm_Y$ follows from Lemma \ref{l:var_Yoneda}} such that for every $X\in\cM$, the induced map
\[
\Hom(D^2 Y\tens X,K) \rar{} \Hom(Y\tens X,K)
\]
is equal to the composition
\[
\Hom(D^2 Y\tens X,K)
\rar{\simeq}           \Hom(X\tens Y,K) \xrar{\ \ (\be^\pm_{Y,X})^*\ \ } \Hom(Y\tens X,K),
\]
where the first arrow is inverse to the isomorphism \eqref{e:duality3}.

\end{defin}
Clearly $\vt^\pm_Y$ is functorial in $Y$, so we have isomorphisms of functors
$\vt^\pm :\Id_{\cM}\rar{\simeq}  D^2$. The next lemma may be considered as an equivalent
definition of $\vt^\pm$.

\begin{lem}    \label{l:braided-GV-category-phi-pm}
Let $(\cM,K,\be)$ be a braided Grothendieck-Verdier category, and let
$\vp^\pm:D^{-1}\rar{\simeq}D$ be the isomorphisms induced by the compositions
\[
\Hom(Y,D^{-1}X) \rar{\simeq} \Hom(X\tens Y,K) \xrar{\ \ (\be^\pm_{Y,X})^*\ \ } \Hom(Y\tens X,K) \rar{\simeq} \Hom(Y,DX)
\]
for all $X,Y\in\cM$. Then
\begin{equation}\label{e:vartheta-equivalent}
\vt^\pm_Y = \vp^\pm_{DY} \quad\text{for all } Y\in\cM
\end{equation}
and
\begin{equation}   \label{e:myQuestion2}
D(\vp^\pm_X) = (\vp^\mp_{DX})^{-1} \quad\text{for all } X\in\cM.
\end{equation}
\end{lem}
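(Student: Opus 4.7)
The plan is to derive both parts from Lemma~\ref{l:var_Yoneda}, which gives a Yoneda-style uniqueness principle: a morphism $Z_2 \to Z_1$ in $\cM$ is determined by the natural family $\Hom(Z_1 \tens X, K) \to \Hom(Z_2 \tens X, K)$ as $X$ varies. (Ordinary Yoneda, characterizing a morphism $A \to B$ by $\Hom(Z, A) \to \Hom(Z, B)$ as $Z$ varies, is equally available and will be used for part~(b).)

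For part~(a), I will verify that $\vp^\pm_{DY} : Y \to D^2Y$ (under the canonical identifications $D^{-1}(DY) = Y$ and $D(DY) = D^2Y$) satisfies the characterizing property of $\vt^\pm_Y$ from Definition~\ref{d:vartheta}. Substituting $X \mapsto DY$ in the defining formula for $\vp^\pm$ describes the induced map $\Hom(X, Y) \to \Hom(X, D^2Y)$ as the composition of~\eqref{e:duality2}, $(\be^\pm_{X, DY})^*$, and~\eqref{e:duality1}. Using the characterization of~$g$ from the diagram~\eqref{e:duality4}, one rewrites the map prescribed in Definition~\ref{d:vartheta} in the same form; the resulting verification is a routine diagram chase using naturality of~$\be^\pm$ together with the compatibility of~\eqref{e:duality1},~\eqref{e:duality2}, and~$g$.

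For part~(b), the key auxiliary fact is that for any morphism $f: A \to B$ in $\cM$, the induced map $(Df)_* : \Hom(Z, DB) \to \Hom(Z, DA)$, viewed under the isomorphism~\eqref{e:duality1} on both sides, equals the pullback $(\id_Z \tens f)^* : \Hom(Z \tens B, K) \to \Hom(Z \tens A, K)$. This follows from the standard characterization $\ev_A \circ (Df \tens \id_A) = \ev_B \circ (\id_{DB} \tens f)$ of $Df$. Applied with $f = \vp^\pm_X$, this identifies $(D(\vp^\pm_X))_*$ with $(\id_Z \tens \vp^\pm_X)^*$, once $\Hom(Z, D^2X)$ and $\Hom(Z, X)$ are both expressed as $\Hom(-, K)$-sets via~\eqref{e:duality1}. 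On the other side of~\eqref{e:myQuestion2}, inverting the defining formula of $\vp^\mp_{DX}$ shows that $((\vp^\mp_{DX})^{-1})_*$ becomes precomposition with $\be^\pm_{DX, Z}$ once $\Hom(Z, X)$ is instead identified with $\Hom(DX \tens Z, K)$ via~\eqref{e:duality2}. Comparing the two expressions reduces~(b) to the defining property of $\vp^\pm_X$ itself (which encodes the braiding $\be^\pm_{Y, X}$), passed through the canonical isomorphism between the two presentations of $\Hom(Z, X)$.

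The main obstacle will be bookkeeping: $\Hom(Z, X)$ admits two natural presentations as a $\Hom(-, K)$-set, namely $\Hom(Z \tens D^{-1}X, K)$ via~\eqref{e:duality1} and $\Hom(DX \tens Z, K)$ via~\eqref{e:duality2}, and these are related by a specific instance of the isomorphism~$g$ of~\eqref{e:duality3}. Carefully tracking how $g$ intertwines the braiding maps $\be^\pm$ and the arrows $\vp^\pm_X$, and ensuring compatibility with the diagram~\eqref{e:duality4}, is the essential technical step that makes the comparison for~(b) work. Once this bookkeeping is in place, both equalities~\eqref{e:vartheta-equivalent} and~\eqref{e:myQuestion2} collapse to identities of natural transformations that follow directly from the definitions of $\vp^\pm$ and~$\vt^\pm$.
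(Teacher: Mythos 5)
Your plan is sound and the claims you make are correct, but you take a more direct route than the paper. The paper factors the argument through an abstract Lemma~\ref{l:attempt2}, which is a statement about an \emph{arbitrary} category equipped with an anti-equivalence $D$: to each functorial family of bijections $\ga_{X,Y}:\Hom(X,DY)\rar{\simeq}\Hom(Y,DX)$ it attaches an isomorphism $\vp^\ga:D^{-1}\rar{\simeq}D$, and the key identity \eqref{e:duality-key-1}, namely $\vp^{\ga^\vee}=(\vp^\ga)^\vee$, is proved by writing down Yoneda formulas \eqref{e:varphi-gamma-X}--\eqref{e:varphi-gamma-X-inverse} for $\vp^\ga_X$ and $(\vp^\ga_X)^{-1}$ explicitly. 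With this in hand, both assertions of Lemma~\ref{l:braided-GV-category-phi-pm} drop out immediately: one observes that $\ga^\pm:=(\be^\pm_{Y,X})^*$ satisfies $(\ga^\pm)^\vee=\ga^\mp$, so \eqref{e:myQuestion2} is a special case of \eqref{e:duality-key-1}; and for \eqref{e:vartheta-equivalent} the paper notes that the naturality of $\vp^\pm$ gives $\vp^\pm_X\circ f=(D^2 f)\circ\vt^\pm_Y$ for all $f\in\Hom(Y,D^{-1}X)$, then specializes to $X=DY$, $f=\id_Y$. Your proposal instead stays inside the braided \GV category throughout and handles the conversion between the two $\Hom(-,K)$-presentations of $\Hom(Z,X)$ by hand, with the naturality fact $\ev_A\circ(Df\tens\id_A)=\ev_B\circ(\id_{DB}\tens f)$ as the auxiliary lemma. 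This can be made to work, and your reduction of \eqref{e:myQuestion2} to precomposition by $\be^\pm_{DX,Z}$ is exactly right (using $(\be^\mp_{Z,DX})^{-1}=\be^\pm_{DX,Z}$), but the ``routine diagram chase'' and ``bookkeeping'' you defer are precisely the content that the paper's Lemma~\ref{l:attempt2}(b) packages once and for all. What the abstraction buys is not only cleanliness but also reusability: Lemma~\ref{l:attempt2} is invoked again, via Corollary~\ref{c:abstract}, in the proof of Proposition~\ref{p:2pivotal}. Your direct approach buys nothing extra here and is somewhat riskier as written, since the commutativity checks involving the isomorphism $g$ of \eqref{e:duality3} are the sort of thing one should actually carry out rather than declare routine.
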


The lemma will be proved in \S\ref{ss:Mitya's_lemma}.

\begin{rem}
In view of Lemma \ref{l:braided-GV-category-phi-pm}, we have
\begin{equation}  \label{e:vt+&vt-}
\vt^\pm_{DX} = D(\vt^\mp_X)^{-1} \quad\text{for all } X\in\cM.
\end{equation}
\end{rem}

By Proposition \ref{p:DD'}, the functor $D^2:\cM\rar{\simeq}\cM$ is equipped with a canonical monoidal structure. On the other hand, we have the Joyal-Street monoidal equivalence $J_{\cM}:\cM\rar{\simeq}\cM$, see Definition~\ref{d:JS}.
Since $J_{\cM}$ equals $\Id_{\cM}$ as a functor, we can view $\vt^\pm$ as isomorphisms of functors $\vt^\pm:J^{\pm 1}_{\cM}\rar{\simeq}D^2$.

\mbr

The next result is proved in \S\ref{s:proof-p:monoidal-iso-id-DD}.

\begin{prop}\label{p:monoidal-iso-id-DD}
The isomorphisms $\vt^\pm:J^{\pm 1}_{\cM}\rar{\simeq}D^2$ are monoidal.
\end{prop}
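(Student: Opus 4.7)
The strategy is to apply Lemma \ref{l:var_Yoneda} to the monoidality square for $\vt^\pm: J^{\pm 1}_\cM \to D^2$. Commutativity of this square reduces to the equality of two pullback maps
\[
\Hom(D^2 Y_1 \otimes D^2 Y_2 \otimes X, K) \longrightarrow \Hom(Y_1 \otimes Y_2 \otimes X, K),
\]
natural in $X \in \cM$, one induced by $u_{Y_1, Y_2} \circ \vt^\pm_{Y_1 \otimes Y_2}$ and the other by $(\vt^\pm_{Y_1} \otimes \vt^\pm_{Y_2}) \circ c^{\pm 1}$, where $c := \be_{Y_2, Y_1} \circ \be_{Y_1, Y_2}$ is the constraint of $J_\cM$. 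It suffices to check that these two natural transformations agree.

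Both pullback maps factor through the canonical iterated adjunction isomorphism
\[
\Hom(D^2 Y_1 \otimes D^2 Y_2 \otimes X, K) \simeq \Hom(X \otimes Y_1 \otimes Y_2, K),
\]
namely the inverse of \eqref{e:D^22}; after this identification, they differ only in a braiding precomposed on the right. On the ``$u \circ \vt^\pm_{Y_1 \otimes Y_2}$'' side, the construction of $u_{Y_1, Y_2}$ via \eqref{e:D^23} together with Definition \ref{d:vartheta} applied to $\vt^\pm_{Y_1 \otimes Y_2}$ produces the single braiding $\be^\pm_{Y_1 \otimes Y_2, X}$. On the ``$(\vt^\pm_{Y_1} \otimes \vt^\pm_{Y_2}) \circ c^{\pm 1}$'' side, one decomposes $\vt^\pm_{Y_1} \otimes \vt^\pm_{Y_2}$ as the iterated composition $(\vt^\pm_{Y_1} \otimes \id_{D^2 Y_2}) \circ (\id_{Y_1} \otimes \vt^\pm_{Y_2})$ and applies Definition \ref{d:vartheta} twice: first to $\vt^\pm_{Y_1}$ with auxiliary object $D^2 Y_2 \otimes X$, then to $\vt^\pm_{Y_2}$ with auxiliary object $X \otimes Y_1$. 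The naturality of $\be^\pm$ is used in between to transport $\vt^\pm_{Y_2}$ across the braiding $\be^\pm_{Y_1, D^2 Y_2 \otimes X}$ and replace the latter by $\be^\pm_{Y_1, Y_2 \otimes X}$. The outcome expresses the pullback as composition with a compound braiding built from $\be^\pm_{Y_1, Y_2 \otimes X}$, $\be^\pm_{Y_2, X \otimes Y_1}$, and $c^{\pm 1} \otimes \id_X$.

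The desired coincidence of the two sides thus reduces to an identity of morphisms $Y_1 \otimes Y_2 \otimes X \to X \otimes Y_1 \otimes Y_2$ involving only these compound braidings. The identity follows from the hexagon axioms (used to decompose each of $\be^\pm_{Y_1 \otimes Y_2, X}$, $\be^\pm_{Y_1, Y_2 \otimes X}$, and $\be^\pm_{Y_2, X \otimes Y_1}$ into elementary braidings) together with the braid relation, which absorbs the contribution of $c^{\pm 1}$. The main technical obstacle lies in the middle step of the second computation: Definition \ref{d:vartheta} characterizes $\vt^\pm_{Y_2}$ only via its action on $\Hom(D^2 Y_2 \otimes Z, K)$ with $D^2 Y_2$ in the leftmost factor, whereas here $\vt^\pm_{Y_2}$ must be applied in the middle of a tensor product; bridging the two positions requires both the naturality of $\be^\pm$ and careful bookkeeping of associativity isomorphisms (which have been suppressed above).
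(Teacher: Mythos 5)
Your strategy is genuinely different from the paper's. The paper does not unwind the pullbacks explicitly. Instead it invokes the bimodule-category machinery of \S\ref{ss:proof_D^2monoidal}: the braiding gives isomorphisms $f^\pm_X : X\tens n_0\rar{\simeq}n_0\tens X$ in the $(\cM^\circ,\cM^\circ)$-bimodule category $\cN$, the general principle of \S\ref{sss:Categorification} equips $\Id_\cM$ with a monoidal structure $s^\pm$, and the remark in \S\ref{ss:isomorphism_monoidal} produces a \emph{monoidal} isomorphism $(\Id_\cM,s^\pm)\rar{\simeq}D^2$ essentially for free. The only computational content is then isolated in a small lemma: (i) identifying this isomorphism with $\vt^\pm$ on test objects, and (ii) checking that $s^\pm$ coincides with the Joyal--Street constraint, which the paper reduces to the single hexagon computation \eqref{e:easy-diagram}. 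Your approach instead reduces the whole monoidality square to an explicit equality of braiding morphisms and proposes to verify it by hand. Both routes are legitimate; the paper's pays a one-time cost (\S\ref{sss:Categorification}--\S\ref{ss:isomorphism_monoidal}) to make the monoidality itself automatic, whereas yours keeps everything concrete at the price of carrying all the adjunction and braiding bookkeeping at once.

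The weak point of your sketch is the very last step. You correctly arrive at an identity between a compound braiding and the constraint $c^{\pm 1}$, but you do not write it down, and you dispose of it by saying it ``follows from the hexagon axioms together with the braid relation.'' That assertion is not self-evident, and the form of the identity is sensitive to the direction conventions (oplax constraint on $D^2$ and $J^{\pm1}$, direction of \eqref{e:duality3}, $\be^\pm_{Y,X}$ versus $\be^\pm_{X,Y}$). If you carry the computation through carefully, the identity you need is precisely the paper's diagram \eqref{e:easy-diagram}, namely
\[
\be_{Y_2,\,X\tens Y_1}\circ\be_{Y_1,\,Y_2\tens X}\;=\;\be_{Y_1\tens Y_2,\,X}\circ\bigl((\be_{Y_2,Y_1}\circ\be_{Y_1,Y_2})\tens\id_X\bigr),
\]
which is established by two hexagons plus one naturality square, not by an appeal to the braid relation. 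You should state and verify this identity explicitly, since it is exactly where a sign (i.e.\ a $c$ versus $c^{-1}$) error could slip through; in fact your own intermediate expressions, if one tracks the position of $c^{\pm 1}$ naively, can easily come out with the wrong power of $c$. Until that identity is written out and matched against the braiding axioms, the argument is a plausible plan rather than a proof.
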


Clearly Proposition~\ref{p:D2D4} follows from
Proposition~\ref{p:monoidal-iso-id-DD}.

\subsection{The canonical monoidal isomorphism $\Id_{\cM}\rar{\simeq} D^4$ and the canonical double-twist}        \label{sss:D^4}

Let $(\cM,K,\be)$ be a braided \GV category. In \S\ref{sss:vartheta-plus-minus} we defined monoidal isomorphisms $\vt^+:J_{\cM}\rar{\simeq}D^2$ and $\vt^-:J^{- 1}_{\cM}\rar{\simeq}D^2$.

\begin{defin}   \label{d:D^4}
We put $\ga_{\cM}=\vt^+\vt^-:\Id_{\cM}\rar{\simeq} D^4$ and call it the \emph{canonical monoidal isomorphism} between $\Id_{\cM}$ and $D^4$.
\end{defin}

The next two remarks give alternative formulas for $\ga_{\cM}$.

\begin{rem}
One has $\ga_{\cM}=\vt^-\vt^+$. To see this, note that if $\cC$ is any monoidal category and $c\in\cC$ is isomorphic to $\e_{\cC}$ then for each $f^+,f^-\in\Hom (\e_{\cC}, c)$ the morphism
$f^+\tens f^-:\e_{\cC}\rar{} c\tens c$ equals $f^-\tens f^+$. Now let $\cC$ be the monoidal category of
functors $\cM\rar{}\cM$, $c:=D^2$, $f^{\pm}:=\vt^{\pm}$ (recall that $J_{\cM}$ equals $\Id_{\cM}$
as a functor).
\end{rem}

\begin{rem}
Clearly $\vt^-$ defines a monoidal isomorphism $(\vt^-)^{(-1)}:D^{-2}\rar{\simeq}J_{\cM}$.
One can check that $\ga_{\cM}$ is equal to the isomorphism
$\Id_{\cM}\rar{\simeq} D^4$
corresponding to the composition $\vt^+\circ (\vt^-)^{(-1)}:D^{-2}\rar{\simeq} D^2$.
We do not use this fact in this article.
\end{rem}

On the other hand, the isomorphism
$(\vt^+)^{-1}\circ\vt^- :J^{-1}_{\cM}\rar{\simeq} J_{\cM}$ defines a monoidal isomorphism
$C_{\cM}:\Id_{\cM} \rar{\simeq} J_{\cM}^2$, i.e., a double-twist in the sense of
Definition~\ref{d:twists}.

\begin{defin}   \label{d:canonical_double-twist}
$C_{\cM}$ is called the \emph{canonical double-twist} of $(\cM ,K,\be)$.
\end{defin}

For each $X\in\cM$, the isomorphisms $\ga_{\cM}:\Id_{\cM}\rar{\simeq} D^4$ and
$C_{\cM}:\Id_{\cM} \rar{\simeq} J_{\cM}^2$ define isomorphisms
$\ga_X :X \rar{\simeq} D^4X$ and $C_X :X \rar{\simeq} X$ (recall that $J_{\cM}$ equals
$\Id_{\cM}$ as a functor). By definition,
\begin{equation}  \label{e:CX}
C_X=(\vt^+_X)^{-1} \circ \vt^-_X \, .
\end{equation}
\begin{lem}    \label{l:gaX}
$\ga_X=\vt^+_{D^2X} \circ \vt^-_X=D^2(\vt^+_X) \circ \vt^-_X=\vt^-_{D^2X} \circ \vt^+_X =D^2(\vt^-_X) \circ \vt^+_X\,$.
\end{lem}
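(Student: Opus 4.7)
The plan is to prove the chain of equalities by combining two ingredients: (1) naturality of $\vt^\pm$ regarded as natural transformations $\Id_{\cM} \to D^2$ (temporarily forgetting their monoidal structure), and (2) the identity $\vt^\pm_{DY} = D(\vt^\mp_Y)^{-1}$ from \eqref{e:vt+&vt-}. The four expressions naturally organize into two pairs joined internally by naturality and then across by \eqref{e:vt+&vt-}; the identification with $\ga_X$ is by unwinding the definition of $\ga_{\cM}$.

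First I would apply naturality of $\vt^+:\Id_{\cM}\rar{\simeq}D^2$ to the morphism $\vt^-_X:X\to D^2X$, which yields
$$\vt^+_{D^2X}\circ\vt^-_X = D^2(\vt^-_X)\circ\vt^+_X,$$
so the first and fourth expressions coincide. Symmetrically, naturality of $\vt^-$ applied to $\vt^+_X$ gives the equality of the third and second expressions.

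To bridge the two pairs, I would apply \eqref{e:vt+&vt-} twice:
$$\vt^+_{D^2X} = D(\vt^-_{DX})^{-1} = D\bigl(D(\vt^+_X)^{-1}\bigr)^{-1} = D^2(\vt^+_X).$$
Post-composing with $\vt^-_X$ identifies the first expression with the second, and combined with the previous step this proves all four are equal.

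Finally, the identification with $\ga_X$ is a matter of unwinding the definition. By construction, $\ga_{\cM}=\vt^+\vt^-$ is the tensor product of the monoidal isomorphisms $\vt^+:J_{\cM}\rar{\simeq}D^2$ and $\vt^-:J^{-1}_{\cM}\rar{\simeq}D^2$ in the monoidal category of endofunctors of $\cM$, i.e., the horizontal composition $\vt^+*\vt^-:J_{\cM}\circ J^{-1}_{\cM}\rar{\simeq}D^2\circ D^2$. Since $J^{\pm 1}_{\cM}=\Id_{\cM}$ as plain functors, the two standard formulas for horizontal composition evaluated at $X$ read $(\vt^+*\vt^-)_X=\vt^+_{D^2X}\circ\vt^-_X=D^2(\vt^-_X)\circ\vt^+_X$, matching expressions (a) and (d). No single step poses a real obstacle; the only bookkeeping point is to distinguish the two monoidal structures on $\Id_{\cM}$ carried by $J_{\cM}$ and $J^{-1}_{\cM}$ when interpreting the symbol $\vt^+\vt^-$, after which the whole lemma reduces to naturality plus a two-fold application of \eqref{e:vt+&vt-}.
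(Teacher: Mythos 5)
Your proof is correct and follows essentially the same route as the paper's, which rests on naturality of $\vt^\pm : \Id_{\cM}\rar{\simeq}D^2$: your steps (1) and (2) are precisely the paper's applications of naturality at $f=\vt^\mp_X$, and your final paragraph simply makes explicit the horizontal-composition formula for $\ga_X=(\vt^+*\vt^-)_X$ that the paper leaves implicit. The one small difference is the bridge between the two pairs: you derive $\vt^+_{D^2X}=D^2(\vt^+_X)$ by a double application of \eqref{e:vt+&vt-}, whereas the paper gets the same identity for free from naturality applied at $f=\vt^\pm_X$ itself (cancel the invertible $\vt^\pm_X$), so that \eqref{e:vt+&vt-} is not needed at all.
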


\begin{proof}
$\vt^{\pm}:\Id_{\cM}\rar{\simeq} D^2$ is an isomorphism of functors, so for any $X,Y\in\cM$ and any
$f:X\rar{} Y$ one has $D^2(f)\circ\vt_X^{\pm}=\vt_Y^{\pm}\circ f$. Taking  $Y=D^2X$,
$f=\vt^{\pm}_X$ one gets $D^2(\vt^+_X)=\vt^+_{D^2X}\,$. Taking $Y=D^2X$,
$f=\vt^{\mp}_X$ one gets $D^2(\vt^{\mp}_X) \circ \vt^{\pm}_X=\vt^{\pm}_{D^2X} \circ \vt^{\mp}_X$.
\end{proof}

\bbr

\begin{rems}
\begin{enumerate}[$($i$)$]
\item Combining formula \eqref{e:CX} and Lemma~\ref{l:gaX}  with formula~\eqref{e:vt+&vt-} one
sees that
\begin{equation}   \label{e:D-conjugatingC}
C_{DX} = D(C_X)
\end{equation}
and
\begin{equation}   \label{e:D-conjugating_ga}
\ga_{DX} = D(\ga_X)^{-1}.
\end{equation}
 \sbr
\item By Remark~\ref{r:twists}(ii), one has $C_{\e}=\id_{\e}$. By \eqref{e:D-conjugatingC},
this implies that
\begin{equation}  \label{e:C_K}
C_K=\id_K   .
\end{equation}
\end{enumerate}
\end{rems}

\section{Pivotal structures on braided Grothendieck-Verdier categories}\label{s:pivotal-braided}

This section is closely related to \cite[Section 4]{egger-mccurdy}. We thank the referee for informing us about this fact.

\subsection{Pivotal structures and twists} The notion of a pivotal structure on a (not necessarily braided) Grothendieck-Verdier category was introduced in Definition \ref{d:pivotal1}. Recall that by Proposition \ref{p:2pivotal},
a pivotal structure on a \GV category $(\cM,K)$ is the same as
a monoidal isomorphism $f:\Id_{\cM}\rar{\simeq}D^2$ such that $f_K:K\rar{\simeq}D^2 K$ is equal to the isomorphism \eqref{e:D^2K}.  So by abuse of language, we often say that $f$ is a pivotal structure. Now suppose that $\cM$ is equipped with
a braiding $\be$.

\mbr

\begin{prop}\label{p:pivotal-twists}
Let $(\cM,K,\be)$ be a braided \GV category.
Then the map $f\mapsto (\vt^+)^{-1}\circ f$ defines a bijection between the set of pivotal structures $f:\Id_{\cM}\rar{\simeq}D^2$ and the set of twists $\te$ on $\cM$ that satisfy
$\te_K=\id_K$.
\end{prop}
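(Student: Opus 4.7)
The plan is to use Proposition \ref{p:2pivotal} to identify pivotal structures with monoidal isomorphisms $f:\Id_{\cM}\rar{\simeq}D^2$ whose value at $K$ is the canonical isomorphism $\mu_K:K\rar{\simeq}D^2K$ of \eqref{e:D^2K}, and then use Proposition \ref{p:monoidal-iso-id-DD} to translate the problem through $\vt^+$. Since $\vt^+:J_{\cM}\rar{\simeq}D^2$ is a monoidal isomorphism, the assignment $f\longmapsto \te:=(\vt^+)^{-1}\circ f$ gives a bijection between the set of monoidal isomorphisms $\Id_{\cM}\rar{\simeq}D^2$ and the set of monoidal isomorphisms $\Id_{\cM}\rar{\simeq}J_{\cM}$ (the latter being, by Definition~\ref{d:twists}, exactly the set of twists on $\cM$). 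It remains only to identify the subset corresponding to pivotal structures under this bijection.

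Under $f\mapsto\te$, the condition $f_K=\mu_K$ on $f$ translates into the condition $\te_K=(\vt^+_K)^{-1}\circ\mu_K$ on $\te$. Hence the proposition will follow once I establish the key identity
\[
\vt^+_K \;=\; \mu_K,
\]
since then the condition $f_K=\mu_K$ becomes precisely $\te_K=\id_K$.

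To prove $\vt^+_K=\mu_K$, I would apply the defining property of $\vt^+_K$ (Definition~\ref{d:vartheta}) with $X=\e$. The defining property says that the map $\Hom(D^2K\tens\e,K)\rar{}\Hom(K\tens\e,K)$ induced by $\vt^+_K\tens\id_{\e}$ equals the composition of the inverse of \eqref{e:duality3} (for $X=\e$, $Y=K$) with $(\be^+_{K,\e})^*$. Tracing $\id_K\in\Hom(\e\tens K,K)$ through \eqref{e:duality3}, Remark~\ref{r:r}(v) tells us that we land on $\mu_K^{-1}\in\Hom(D^2K,K)$. Precomposing with $\vt^+_K$ yields $\mu_K^{-1}\circ\vt^+_K$. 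On the other side, since $\be^+_{K,\e}$ is the identity after identifying $K\tens\e$ and $\e\tens K$ with $K$ via the unit constraints, the composition applied to $\id_K$ produces $\id_K$. Comparing the two sides gives $\mu_K^{-1}\circ\vt^+_K=\id_K$, i.e., $\vt^+_K=\mu_K$, as required.

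The main obstacle is the bookkeeping in the last step: one must verify carefully that the braiding $\be_{K,\e}$ coincides with the unit isomorphism (a standard consequence of the hexagon axioms together with $\te_{\e}=\id_{\e}$ for any twist, but easier here directly from coherence), and then chase the element $\id_K$ through Definition~\ref{d:vartheta} and Remark~\ref{r:r}(v) without confusing the direction of the isomorphism \eqref{e:duality3}. Once this identification is in hand, everything else is formal bijection-chasing.
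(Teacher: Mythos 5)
Your proof is correct, and it takes essentially the same route as the paper's. The paper's own proof is a single sentence --- ``This follows immediately from Proposition \ref{p:monoidal-iso-id-DD} and Definition \ref{d:twists}'' --- which gives the bijection between monoidal isomorphisms $\Id_{\cM}\rar{\simeq}D^2$ and twists, but silently assumes the identity $\vt^+_K=\mu_K$ (where $\mu_K$ denotes the canonical isomorphism \eqref{e:D^2K}), which is precisely what is needed to match the condition $f_K=\mu_K$ from Proposition \ref{p:2pivotal} with the condition $\te_K=\id_K$. You identify this as the key step and verify it by chasing $\mu_K^{-1}\in\Hom(D^2K\tens\e,K)$ through Definition \ref{d:vartheta} with $X=\e$, using Remark \ref{r:r}(v) to recognize $g^{-1}(\mu_K^{-1})=\id_K$ and the standard coherence fact that $\be_{K,\e}$ agrees with the unit isomorphism; this is exactly what is required. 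One small stylistic remark: invoking ``$\te_\e=\id_\e$ for any twist'' as justification for $\be_{K,\e}$ being the unit constraint is circular in spirit (that fact itself rests on the same coherence argument), so the parenthetical ``easier here directly from coherence'' is the right justification to keep and the other should be dropped. Otherwise the argument is clean and complete.
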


\begin{proof}
 This follows immediately from Proposition \ref{p:monoidal-iso-id-DD} and Definition \ref{d:twists}.
\end{proof}

\begin{rem}
In \S\ref{ss:pivotal} we defined a pivotal structure to be an isomorphism
\[
\psi_{X,Y}: \Hom(X\tens Y,K)\rar{\simeq}\Hom(Y\tens X,K), \qquad X,Y\in\cM,
\]
satisfying certain properties.
It is easy to check that the relation between $\psi$ and the corresponding twist $\te$ is as
follows:
\[
\psi_{X,Y} = (\te_Y\tens\id_X)^*\circ\be_{Y,X}^* : \Hom(X\tens Y,K) \rar{\simeq} \Hom(Y\tens X,K) \rar{\simeq} \Hom(Y\tens X,K).
\]
\end{rem}

\subsection{The involution on the set of pivotal structures}
Let $(\cM,K,\be)$ be a braided Grothen\-dieck-Verdier category. In the next proposition we define a canonical involution on the set of all pivotal structures on $(\cM,K )$ or equivalently,
on the set of twists $\te$ on $(\cM ,\be )$ such that $\te_K=\id_K$.
The fixed points of this involution correspond to
{\em ribbon structures} (see Definition~\ref{d:ribbon} and Corollary~\ref{c:ribbon} below).

\begin{prop}  \label{p:involution}
Let $(\cM,K,\be)$ be a braided \GV category.
\begin{enumerate}[$($i$)$]
\item For every twist $\te :\Id_{\cM}\rar{\simeq} J_{\cM}$ there is a unique twist
$\te' :\Id_{\cM}\rar{\simeq} J_{\cM}$ such that $\te\te': \Id_{\cM}\rar{\simeq} J_{\cM}^2$ is equal
to the canonical double-twist $C_{\cM}$ from Definition~\ref{d:canonical_double-twist}.
 \sbr
\item The map $\te\mapsto\te'$ is an involution.
 \sbr
\item If $\te_K=\id_K$ then $\te'_K=\id_K$.
 \sbr
\item If $\te_K=\id_K$ then $\te'_X=D^{-1}(\te_{DX})$.
 \sbr
\item Suppose that $\te_K=\id_K$. Let $f:\Id_{\cM}\rar{\simeq}D^2$ and
$f':\Id_{\cM}\rar{\simeq}D^2$ be the pivotal structures corresponding to $\te$ and $\te'$
by Proposition~\ref{p:pivotal-twists}. Then the isomorphism $ff':\Id_{\cM}\rar{\simeq}D^4$
equals the canonical isomorphism $\ga_{\cM}:\Id_{\cM}\rar{\simeq}D^4$ from
Definition~\ref{d:D^4}.
\end{enumerate}
\end{prop}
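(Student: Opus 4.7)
The plan is to parametrize pivotal structures by twists via Proposition~\ref{p:pivotal-twists}, writing $f=\vt^+\circ\te$, and to repeatedly exploit the fact that for any natural automorphism $\al$ of $\Id_\cM$, each component $\al_X$ is central in $\Aut(X)$; this applies in particular to each $\te_X$ and each $(C_\cM)_X$.

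For (i), uniqueness follows immediately from the centrality of $\te_X$ (cancel on the left in $\te_X\circ\te'_X=\te_X\circ\te''_X$), and for existence I would set $\te'_X:=\te_X^{-1}\circ(C_\cM)_X$ and verify the twist identity
\[
\te'_{X\tens Y}=(\be_{Y,X}\be_{X,Y})\circ(\te'_X\tens\te'_Y).
\]
The verification combines the twist identity for $\te$, the double-twist identity for $C_\cM$ (Remark~\ref{r:twists}(i)), and the commutation of $\be_{Y,X}\be_{X,Y}$ with $\te_X\tens\te_Y$ in $\End(X\tens Y)$; this last fact follows from two applications of the naturality of $\be$ to the endomorphisms $\te_X$ and $\te_Y$. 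Parts~(ii) and~(iii) are then formal: $(\te')'_X=(C_\cM)_X^{-1}\circ\te_X\circ(C_\cM)_X=\te_X$ by centrality, and $\te'_K=\te_K^{-1}\circ(C_\cM)_K=\id_K\circ\id_K=\id_K$ using~\eqref{e:C_K}.

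For (iv), I would first substitute $\te_X^{-1}=f_X^{-1}\circ\vt^+_X$ and $(C_\cM)_X=(\vt^+_X)^{-1}\circ\vt^-_X$ into the definition of $\te'_X$, obtaining $\te'_X=f_X^{-1}\circ\vt^-_X$. Separately, I would expand $\te_{DX}=(\vt^+_{DX})^{-1}\circ f_{DX}$, apply formula~\eqref{e:vt+&vt-} of Lemma~\ref{l:braided-GV-category-phi-pm} to replace $(\vt^+_{DX})^{-1}$ by $D(\vt^-_X)$, and apply the pivotal identity~\eqref{e:Df1} to replace $f_{DX}$ by $D(f_X^{-1})$. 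Contravariance of $D$ then gives $\te_{DX}=D(f_X^{-1}\circ\vt^-_X)=D(\te'_X)$, and applying $D^{-1}$ yields~(iv).

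For (v), I would compute $(ff')_X=D^2(f_X)\circ f'_X$. Substituting $f=\vt^+\circ\te$ and $f'=\vt^+\circ\te'$ and invoking naturality of $\vt^+$ in the form $D^2(\te_X)\circ\vt^+_X=\vt^+_X\circ\te_X$, the composite collapses to $D^2(\vt^+_X)\circ\vt^+_X\circ\te_X\circ\te'_X$; replacing $\te_X\circ\te'_X=(C_\cM)_X$ by $(\vt^+_X)^{-1}\circ\vt^-_X$ yields $D^2(\vt^+_X)\circ\vt^-_X$, which is $\ga_X$ by Lemma~\ref{l:gaX}. The main obstacle is the bookkeeping in~(iv), where the contravariance of $D$ must be combined with the two nontrivial identities~\eqref{e:vt+&vt-} and~\eqref{e:Df1}; the remaining parts reduce to formal manipulation once one exploits the centrality of natural automorphisms of $\Id_\cM$.
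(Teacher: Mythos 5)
Your proof is correct and follows essentially the same route as the paper's. Parts (iv) and (v) are the same computation (substituting $\te = (\vt^+)^{-1}\circ f$, then invoking \eqref{e:vt+&vt-}, \eqref{e:Df1}, and Lemma~\ref{l:gaX}); the only difference is that in (i)--(ii) the paper simply cites the torsor structure on twists and double-twists (Remark~\ref{r:twists}(iii)--(iv)), whereas you verify the twist identity for $\te'_X:=\te_X^{-1}\circ(C_\cM)_X$ by hand, which amounts to re-proving a special case of the remark.
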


\begin{rem}  \label{r:involution}
If $K\simeq\e$ then the condition $\te_K=\id_K$ holds automatically because by
Remark~\ref{r:twists}, $\te_{\e}=\id_{\e}$.
\end{rem}

\begin{proof}
Statements (i) and (ii) follow from Remarks~\ref{r:twists}(iii-iv).
Statement (iii) follows from formula \eqref{e:C_K}.

\mbr

Let us prove (iv).
By \eqref{e:CX} and the definition of $\te'$, this amounts to showing that

\begin{equation}  \label{e:to_be_proved}
\te_X\circ D^{-1}(\te_{DX})=(\vt^+_X)^{-1}\circ\vt^-_X, \qquad X\in\cM .
\end{equation}

By Proposition \ref{p:pivotal-twists}, $\te = (\vt^+)^{-1}\circ f$ for some pivotal structure
$f:\Id_{\cM}\rar{\simeq}D^2$. Then for every $X\in\cM$ one has $\te_X = (\vt^+_X)^{-1}\circ f_X$.
By
formula \eqref{e:Df1}, $D^{-1}(f_{DX})=f_X^{-1}$, so by
formula~\eqref{e:vt+&vt-},
$D^{-1}(\te_{DX})=f_X^{-1}\circ\vt_X^-$. Formula \eqref{e:to_be_proved} follows.

\mbr

Finally, we prove (v). By definition, $f=\vt^+\circ\te$ and
\[
f' = \vt^+\circ\te' = \vt^+\circ(\vt^+)^{-1}\circ\vt^-\circ\te^{-1} = \vt^-\circ\te^{-1}
\]
(in the last equality we view $\te^{-1}$ as an isomorphism $\Id_{\cM}\rar{\simeq}J_{\cM}^{-1}$). Since $\te$ belongs to the Bernstein center of $\cM$ (recall that $J_{\cM}=\Id_{\cM}$ as a functor), we have
\[
ff' = (\vt^+\circ\te)\cdot(\vt^-\circ\te^{-1}) = \vt^+\vt^- = \ga_{\cM}.
\]
\end{proof}

\section{Ribbon \GV categories} \label{s:ribbon}

\begin{defin}\label{d:ribbon}
A \emph{ribbon structure} on a braided \GV category $(\cM,K,\be)$ is a twist $\te$ on
$(\cM,\be)$ such that
\begin{equation}   \label{e:ribbon}
\te_X=D^{-1}(\te_{DX})\quad \mbox{ for all } X\in\cM.
\end{equation}
A \emph{ribbon \GV category} is a braided \GV category with a ribbon structure.
\end{defin}

\begin{lem}   \label{l:ribbon}
A twist $\te$ satisfies \eqref{e:ribbon} if and only if $\te_K=\id_K$ and $\te'=\te$, where
$\te'$ is defined in Proposition~\ref{p:involution}(i).
\end{lem}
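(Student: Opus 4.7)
The plan is to deduce both directions of the equivalence from Proposition~\ref{p:involution}(iv), which states that when $\te_K=\id_K$, the involution sends $\te$ to the twist $\te'$ given by $\te'_X = D^{-1}(\te_{DX})$.

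First I would handle the ``if'' direction, which is essentially immediate: assuming $\te_K=\id_K$ and $\te'=\te$, Proposition~\ref{p:involution}(iv) gives $\te_X = \te'_X = D^{-1}(\te_{DX})$ for every $X\in\cM$, which is precisely \eqref{e:ribbon}.

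For the converse, suppose that $\te$ satisfies \eqref{e:ribbon}. The main step is to derive $\te_K=\id_K$ from the ribbon identity, since only then does Proposition~\ref{p:involution}(iv) become applicable. I would specialize \eqref{e:ribbon} to $X=\e$, obtaining $\te_\e = D^{-1}(\te_{D\e})$. By Remark~\ref{r:twists}(ii) one has $\te_\e=\id_\e$, so this forces $\te_{D\e}=\id_{D\e}$. Using the canonical isomorphism $D\e\rar{\simeq}K$ from \eqref{e:D1} together with naturality of the automorphism $\te$ of $\Id_{\cM}$, this translates into $\te_K=\id_K$. Once $\te_K=\id_K$ is in hand, Proposition~\ref{p:involution}(iv) yields $\te'_X = D^{-1}(\te_{DX})$, and comparison with \eqref{e:ribbon} gives $\te_X=\te'_X$ for all $X$, that is, $\te'=\te$.

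I do not expect any serious obstacle; the only point requiring a moment of care is the passage from $\te_{D\e}=\id_{D\e}$ to $\te_K=\id_K$ via the canonical isomorphism $D\e\cong K$, but this is purely formal once one observes that the identity automorphism of $D\e$ is transported to the identity automorphism of $K$ by naturality of $\te$.
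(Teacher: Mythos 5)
Your proof is correct and follows essentially the same route as the paper: reduce everything to Proposition~\ref{p:involution}(iv), and for the ``only if'' direction derive $\te_K=\id_K$ by specializing \eqref{e:ribbon} at $X=\e$, using $\te_{\e}=\id_{\e}$ (Remark~\ref{r:twists}(ii)) and the identification $D\e\cong K$. The paper's proof is just a more compressed version of the same argument.
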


\begin{proof}
By Proposition~\ref{p:involution}(iv), we only have to show that the equality $\te_K=\id_K$
follows from \eqref{e:ribbon}. This is clear because $K=D\e$ and
by Remark~\ref{r:twists}(ii), $\te_{\e}=\id_{\e}$.
\end{proof}

\begin{cor}   \label{c:ribbon}
The correspondence between twists and pivotal structures
$($see Proposition~\ref{p:pivotal-twists}$)$ induces a bijection between
ribbon structures on $(\cM,K,\be)$ and those pivotal structure $f:\Id_{\cM}\rar{\simeq}D^2$
that are invariant under the involution $f\mapsto f'$ from Proposition~\ref{p:involution}$($v$)$.
\end{cor}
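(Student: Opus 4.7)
The plan is to assemble this corollary directly from three earlier results, without any new calculation. The work has essentially been done in Lemma~\ref{l:ribbon}, Proposition~\ref{p:pivotal-twists}, and Proposition~\ref{p:involution}(v); what remains is to combine them correctly.

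First, I would invoke Proposition~\ref{p:pivotal-twists} to observe that the map $\Phi: f\mapsto (\vt^+)^{-1}\circ f$ is a bijection from the set of pivotal structures $f:\Id_{\cM}\rar{\simeq} D^2$ onto the set of twists $\te:\Id_{\cM}\rar{\simeq}J_{\cM}$ satisfying $\te_K=\id_K$. Next, by Lemma~\ref{l:ribbon}, the subset of ribbon structures inside this latter set is precisely those twists $\te$ with $\te_K=\id_K$ satisfying $\te=\te'$, where $\te'$ is the involution from Proposition~\ref{p:involution}(i).

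The remaining point is to verify that under $\Phi$, the involution $\te\mapsto\te'$ on twists (with $\te_K=\id_K$) corresponds to the involution $f\mapsto f'$ on pivotal structures. This is exactly the content of Proposition~\ref{p:involution}(v): if $\Phi(f)=\te$ and $\Phi(\tilde f)=\te'$, then $f\tilde f=\ga_{\cM}$, so $\tilde f$ is the unique pivotal structure related to $f$ by the involution described in the statement of the corollary. In particular, $f=\tilde f$ if and only if $\te=\te'$.

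Combining these observations, $\Phi$ restricts to a bijection between fixed points of $f\mapsto f'$ and twists $\te$ with $\te_K=\id_K$ satisfying $\te=\te'$, and by Lemma~\ref{l:ribbon} the latter set is the set of ribbon structures. There is no genuine obstacle; the only thing to be careful about is ensuring one has correctly identified the involutions on the two sides, which is immediate from the uniqueness clauses in Proposition~\ref{p:involution}(i) and Proposition~\ref{p:pivotal-twists} together with part~(v) of Proposition~\ref{p:involution}.
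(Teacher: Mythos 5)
Your proof is correct and takes essentially the same approach as the paper, which simply cites Lemma~\ref{l:ribbon} and Propositions~\ref{p:pivotal-twists}--\ref{p:involution} without further elaboration; you have just unpacked that one-line proof by explicitly tracking the bijection $\Phi$ and checking that it intertwines the two involutions, which is precisely what part~(v) of Proposition~\ref{p:involution} provides.
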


\begin{proof}
This follows from Lemma~\ref{l:ribbon} and Propositions~\ref{p:pivotal-twists}-\ref{p:involution}.
\end{proof}

\begin{example}
The r-category $\sD_G(G)$ from Example~\ref{example:dual-gen4} has a canonical ribbon structure, see \cite[\S{}A.5]{foundations}. It corresponds (in the sense of Proposition \ref{p:pivotal-twists}) to the pivotal structure from Example~\ref{example:pivotal}.
If the group $G$ is finite and the ground field $k$ is algebraically closed
then $\sD_G(G)$ is the derived category of the abelian category $\cA$ formed
by modules over the quantum double of the group algebra of $G$, and the
above-mentioned ribbon structure on $\sD_G(G)$ comes from the standard ribbon structure
on $\cA$. (The definition of the quantum double and the standard ribbon structure
on $\cA$ can be found, e.g.,  in \cite[\S3.2]{BK}).
\end{example}

\begin{rem}
The identity \eqref{e:ribbon} holds if and only if for any $X,Y\in\cM$ and $B:X\tens Y\rar{} K$ one has
\begin{equation}   \label{e:without_D}
B\circ (\id_X\tens\te_Y)=B\circ (\te_X\tens\id_Y).
\end{equation}
Note that unlike \eqref{e:ribbon}, formula \eqref{e:without_D} makes sense in \emph{any}
braided category with a fixed object $K$ ($K$ does not have to be dualizing and $\cM$ does not
have to be Grothendieck-Verdier). We do not know whether condition \eqref{e:without_D} is really
interesting in this generality.
\end{rem}

\section{Relation between r-categories and \GV categories}   \label{s:relation}
In this section we will use the notions of idempotent arrow, closed idempotent, and Hecke subcategory (see Definitions~\ref{d:closed_idemp} and \ref{d:Hecke}).
Lemma~\ref{l:properties-triple} and Proposition~\ref{p:inverse-construction} below answer the following question: which \GV categories can be realized as Hecke subcategories of r-categories? In order to formulate the answer, we will need

\begin{lem}\label{l:idempotents-auxiliary}
Let $\cM$ be a monoidal category and $\pi:\e\rar{}e$ an idempotent arrow in $\cM$.
 \sbr
\begin{enumerate}[$($a$)$]
\item The isomorphisms $e=\e\tens e\xrar{\ \ \pi\tens\id_e\ \ }e\tens e$ and $e=e\tens\e\xrar{\ \ \id_e\tens\pi\ \ }e\tens e$ are equal.
 \sbr
\item If $u:e\tens e\rar{\simeq}e$ is the inverse of either of the two isomorphisms in $($a$)$, then the pair $(e,u)$ is a unit object \cite[Def.~2.1(3)]{foundations} of $e\cM e$.
 \sbr
\item If $\varpi:\e\rar{}e$ is any other idempotent arrow in $\cM$, there is a unique automorphism $f:e\rar{\simeq}e$ such that $f\circ\varpi=\pi$.
\end{enumerate}
\end{lem}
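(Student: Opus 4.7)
Part (a) is the substantive statement; (b) and (c) will follow from it by more routine arguments.

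For (a), I need to show that the two isomorphisms $\alpha := (\pi\tens\id_e)\circ\lambda_e^{-1}$ and $\beta := (\id_e\tens\pi)\circ\rho_e^{-1}$ from $e$ to $e\tens e$ coincide (here $\lambda$ and $\rho$ denote the unit constraints of $\cM$). The first step is to compute $\pi\tens\pi:\e\tens\e\to e\tens e$ in two ways via the interchange law and to rewrite $\id_\e\tens\pi$ and $\pi\tens\id_\e$ using naturality of the unit constraints; applying the coherence identity $\lambda_\e=\rho_\e$ then yields $\alpha\circ\pi=\beta\circ\pi$. The main obstacle is that $\pi$ need not be an epimorphism, so this equation does not by itself imply $\alpha=\beta$. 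The idempotent-arrow hypothesis gets around this: tensoring on the right with $\id_e$ and cancelling the isomorphism $\pi\tens\id_e$ gives $\alpha\tens\id_e=\beta\tens\id_e$, and then one verifies directly — using that $\id_e\tens\pi$ is also an isomorphism, together with bifunctoriality and naturality of $\rho$ — that any $g:e\to e$ with $g\tens\id_e=\id_{e\tens e}$ must equal $\id_e$, applied to $g=\beta^{-1}\circ\alpha$.

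Given (a), part (b) is largely formal. The morphism $u$ is unambiguously defined, and for any $X=e\tens Y\tens e\in e\cM e$ the morphisms $u\tens\id_{Y\tens e}:e\tens X\to X$ and $\id_{e\tens Y}\tens u:X\tens e\to X$ provide the required left and right unit constraints. The triangle and pentagon axioms in $e\cM e$ reduce to the corresponding axioms in $\cM$, since the associativity constraint on $e\cM e$ is inherited from $\cM$.

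For (c), I would define
\[
f := \lambda_e\circ(\varpi\tens\id_e)^{-1}\circ(\pi\tens\id_e)\circ\lambda_e^{-1},
\]
which is manifestly an automorphism of $e$. A short diagram chase — naturality of $\lambda$ to move $\varpi$ past $\lambda_e^{-1}$, bifunctoriality to rewrite $\pi\tens\varpi$, part (a) applied to $\varpi$ to identify $(\varpi\tens\id_e)^{-1}\circ(\id_e\tens\varpi)$ with $\lambda_e^{-1}\circ\rho_e$, and finally the identity $\lambda_\e=\rho_\e$ together with naturality of $\rho$ — shows $f\circ\varpi=\pi$. For uniqueness: if $f'\circ\varpi=\pi$ too, then tensoring with $\id_e$ and cancelling $\varpi\tens\id_e$ gives $f\tens\id_e=f'\tens\id_e$, and the faithfulness argument isolated in (a) gives $f=f'$.
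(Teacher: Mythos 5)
The paper does not actually prove Lemma~\ref{l:idempotents-auxiliary}: its ``proof'' consists of three citations to \cite{foundations}, so there is no internal argument here to compare yours against. Your proof is a correct, self-contained one, and the route is almost certainly the natural one underlying the cited statements.

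The substantive point in (a) and in the uniqueness half of (c) is exactly what you isolate: the passage from $\alpha\circ\pi=\beta\circ\pi$ to $\alpha=\beta$ cannot use that $\pi$ is epi, but tensoring on the right with $\id_e$ and cancelling the isomorphism $\pi\tens\id_e$ reduces everything to the sublemma that $-\tens e$ is ``faithful on endomorphisms of $e$'': $g\tens\id_e=\id_{e\tens e}$ forces $g=\id_e$. Your proof of that sublemma --- postcompose with $\id_e\tens\pi$, use the interchange law $g\tens\pi=(g\tens\id_e)\circ(\id_e\tens\pi)=(\id_e\tens\pi)\circ(g\tens\id_{\e})$, cancel the isomorphism $\id_e\tens\pi$ to obtain $g\tens\id_{\e}=\id$, and finish with naturality of $\rho$ --- is correct and uses both halves of the idempotent-arrow hypothesis, as it must. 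The derivation of $\alpha\circ\pi=\beta\circ\pi$ via the two interchange factorizations of $\pi\tens\pi$, naturality of $\lambda$ and $\rho$, and the coherence identity $\lambda_{\e}=\rho_{\e}$ is also correct, as is your computation $f\circ\varpi=\pi$ in (c).

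The one imprecision is in (b). An object $X\in e\cM e$ is only \emph{isomorphic} to some $e\tens Y\tens e$, so the candidate unit constraint $u\tens\id_{Y\tens e}$ depends on a chosen presentation and is not visibly a natural transformation on $e\cM e$. The fix is standard: for $X\in e\cM e$ the morphism $\pi\tens\id_X:\e\tens X\to e\tens X$ is an isomorphism (it is conjugate to $(\pi\tens\id_e)\tens\id_{Y\tens e}$ along the isomorphism $X\cong e\tens Y\tens e$), so $\lambda_X\circ(\pi\tens\id_X)^{-1}:e\tens X\to X$ is a natural isomorphism, agreeing with $u\tens\id_{Y\tens e}$ when $X=e\tens Y\tens e$; similarly on the right with $\rho$ and $\id_X\tens\pi$. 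This phrasing also shows directly that $e\tens-$ and $-\tens e$ are naturally isomorphic to the identity on $e\cM e$, which is what is needed if the definition of unit object in \cite{foundations} is stated in terms of these functors being fully faithful (or equivalences) rather than in terms of chosen unit constraints. With that small adjustment, the argument is complete.
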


\begin{proof}
Part (a) is \cite[Lemma 2.10]{foundations}, part (b) follows from \cite[Lemma 2.18]{foundations}, and part (c) is \cite[Corollary 2.40]{foundations}.
\end{proof}

\subsection{Hecke subcategories of r-categories}\label{ss:hecke-subcategory-r-category}
Let $\cM$ be an r-category, and let $\pi:\e\rar{}e$ be an idempotent arrow in $\cM$. We have a canonical identification $D^2(\e)\rar{\simeq}\e$, so we may view $D^2(\pi)$ as a morphism $D^2(\pi):\e\rar{}D^2(e)$. Since $D^2$ has a natural monoidal structure, it follows that $D^2(\pi)$ is an idempotent arrow.

\mbr

Next suppose that $D^2(e)\cong e$. By Lemma \ref{l:idempotents-auxiliary}(c) there is a unique isomorphism $\vp:D^2(e)\rar{\simeq}e$ such that $\vp\circ D^2(\pi)=\pi$.
Moreover, the Hecke subcategory $\cM':=e\cM e$ is a \GV category with dualizing object $K':=De$, and the duality functor for $(\cM',K')$ can be canonically identified with the restriction $D\bigl\lvert_{\cM'}$, using the isomorphisms\footnote{See \cite[Lemma A.50]{foundations} for more details.}
\[
\Hom(X\tens Y,K') \rar{\simeq} \Hom(X\tens Y,\e) \rar{\simeq} \Hom(X,DY), \qquad X,Y\in\cM'.
\]
(where the first arrow is induced by $D\pi:K'\rar{}D\e=\e$).
We write $\e'=e$ for the unit object\footnote{Strictly speaking, we use the structure of a unit object on $e$ coming from Lemma \ref{l:idempotents-auxiliary}(b).} of $\cM'$, and we keep the notation $D$ for the duality functor of $(\cM',K')$.

\mbr

We define $f:K'\rar{}\e'$ to be the composition $K'=De\rar{D\pi}D\e=\e\rar{\pi}e=\e'$.

\subsection{Properties of the triple $(\cM',K',f)$} We begin with the following

\begin{rem}\label{r:dual-morphism-K-1}
If $(\cM,K)$ is any \GV category and $f:K\rar{}\e$ is a morphism in $\cM$, then using the canonical identifications $DK\rar{\simeq}\e$ and $D\e\rar{\simeq}K$, we can view $Df$ as a morphism $K\rar{}\e$.
\end{rem}

\begin{lem}\label{l:properties-triple}
Let $\e\rar{\pi}e$ be an idempotent arrow in an r-category $\cM$, and let $(\cM',K',f)$ be the corresponding triple constructed as in \S\ref{ss:hecke-subcategory-r-category}. Then
 \sbr
\begin{enumerate}[$($a$)$]
\item $Df=f$ $($cf.~Remark \ref{r:dual-morphism-K-1}$)$;
 \sbr
\item for each $X\in\cM'$, the map $g\mapsto g\circ\pi$ is a bijection $\Hom(\e',X)\rar{\simeq}\Hom(\e,X)$;
 \sbr
\item for each $X\in\cM'$, the map $h\mapsto D\pi\circ h$ is a bijection $\Hom(X,K')\rar{\simeq}\Hom(X,\e)$.
\end{enumerate}
\end{lem}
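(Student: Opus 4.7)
The plan is to prove (b) first by the standard theory of closed idempotents, derive (c) from (b) via the antiequivalence $D$, and finally verify (a) by an explicit computation resting on $\vp \circ D^2\pi = \pi$.

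For (b), Lemma \ref{l:idempotents-auxiliary}(b) endows $e$ with the structure of a unit object in $\cM'$; concretely, each $X \in \cM'$ admits a left unit constraint $\lambda_X : e \tens X \iso X$ characterized by $\lambda_X \circ (\pi \tens \id_X) = l_X$, the left unit in $\cM$. The inverse of the map $g \mapsto g \circ \pi$ is then given by $h \mapsto \lambda_X \circ (\id_e \tens h) \circ r_e^{-1}$, where $r_e : e \iso e \tens \e$ is the right unit in $\cM$; a direct diagram chase using naturality of the unit constraints together with Lemma \ref{l:idempotents-auxiliary}(a) shows that these two maps are mutually inverse.

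For (c), apply the antiequivalence $D$. By Lemma \ref{l:GV-Hecke-subcategory}, $\cM'$ is $D$-stable, so $DX \in \cM'$, and $D$ induces bijections $\Hom(X, De) \iso \Hom(D^2 e, DX)$ and $\Hom(X, \e) \iso \Hom(\e, DX)$ (the latter via the canonical iso $D\e \iso \e$ in the r-category $\cM$). Under these, the map $h \mapsto D\pi \circ h$ becomes $k \mapsto k \circ D^2\pi$; rewriting the defining identity as $D^2\pi = \vp^{-1} \circ \pi$ (as morphisms $\e \to D^2 e$), this factors as pre-composition with the iso $\vp^{-1}$ followed by $k' \mapsto k' \circ \pi : \Hom(e, DX) \to \Hom(\e, DX)$. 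The second factor is a bijection by (b) applied to $DX$, and the first is an iso, so the composite is a bijection.

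For (a), write $f = \pi \circ c \circ D\pi : De \to e$, with $c : D\e \iso \e$ the canonical iso in the r-category $\cM$. Applying the contravariant functor $D$ yields $Df = D^2\pi \circ D(c) \circ D\pi : De \to D^2 e$. Interpreting $Df$ as a morphism $K' \to \e'$ via Remark \ref{r:dual-morphism-K-1} amounts to post-composing with the canonical iso $DK' \iso \e'$ of the \GV category $\cM'$. One first checks that this canonical iso coincides with $\vp$: this is a Yoneda argument using that both are characterized by the same universal property forced by the adjunction \eqref{e:duality1} in $\cM'$, together with Lemma \ref{l:idempotents-auxiliary}(c) and the identity $\vp \circ D^2\pi = \pi$. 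Once this identification is granted, the same identity, combined with the explicit form of the canonical iso $\alpha_\e : \e \iso D^2\e$ in terms of $c$ and $D(c)$, reduces $\vp \circ Df$ to $\pi \circ c \circ D\pi = f$.

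The main obstacle is the bookkeeping verification in (a) that the canonical iso $DK' \iso \e'$ of the \GV category $\cM'$ agrees with the explicit $\vp$ of \S\ref{ss:hecke-subcategory-r-category}; once this identification is in place, the remainder of (a) is a formal manipulation based on $\vp \circ D^2\pi = \pi$.
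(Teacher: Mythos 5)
Your proof is correct and follows essentially the same route as the paper's. The paper's own argument for (a) is the one-liner ``with our identifications, $D^2\pi=\pi$; therefore $Df=D(\pi\circ D\pi)=D^2\pi\circ D\pi=\pi\circ D\pi=f$''; what you add is to unwind those identifications, in particular the observation (which the paper leaves tacit) that the canonical isomorphism $DK'\iso\e'$ intrinsic to the \GV structure on $(\cM',K')$ coincides with the isomorphism $\vp:D^2e\iso e$ pinned down by $\vp\circ D^2\pi=\pi$. For (b) the paper simply cites the adjunction $Y\mapsto e\tens Y\dashv(\cM'\hookrightarrow\cM)$ from \cite[Prop.~2.22(a)]{foundations}, whereas you construct the inverse directly from the unit constraints on $e\cM e$; the two are the same fact packaged differently. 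For (c) both you and the paper transport (b) across the anti-autoequivalence $D$, using $D(\cM')=\cM'$. One small slip: with your convention $r_e:e\iso e\tens\e$, the inverse map in (b) should be $h\mapsto\lambda_X\circ(\id_e\tens h)\circ r_e$, not $r_e^{-1}$.
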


\begin{proof}
(a) With our identifications, we have $D^2(\pi)=\pi$. Therefore $Df=D(\pi\circ D\pi)=D^2(\pi)\circ D\pi=\pi\circ D\pi=f$.

\mbr

(b) This follows from the fact \cite[Prop.~2.22(a)]{foundations} that the functor $Y\mapsto e\tens Y$ is left adjoint to the inclusion functor $\cM'=e\cM e\into\cM$.

\mbr

(c) This follows from (b) using the fact that $D$ is an anti-autoequivalence of $\cM$.
\end{proof}

\subsection{The inverse construction} \label{ss:inverse-construction}
\begin{prop}\label{p:inverse-construction}
Let $(\cM',K')$ be a \GV category with unit object $\e'$ and duality functor $D$, and let $f:K'\rar{}\e'$ be a morphism such that $Df=f$ $($cf.~Remark \ref{r:dual-morphism-K-1}$)$. Then the triple $(\cM',K',f)$ arises from a closed idempotent in an r-category by means of the construction described in \S\ref{ss:hecke-subcategory-r-category}.
\end{prop}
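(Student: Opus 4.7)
The plan is to construct an r-category $\cM$ together with a closed idempotent $e\in\cM$ and an idempotent arrow $\pi:\e\to e$ such that the triple associated to $\pi$ by the construction of \S\ref{ss:hecke-subcategory-r-category} recovers $(\cM',K',f)$. The strategy is to formally adjoin a new unit object $\e$ to $\cM'$, using $f$ as the gluing datum. Concretely, I would define $\cM$ to have objects $\{\e\}$ together with the objects of $\cM'$, and Hom-sets
\begin{align*}
&\Hom_\cM(X,Y) := \Hom_{\cM'}(X,Y) \quad\text{for }X,Y\in\cM', \\
&\Hom_\cM(\e,X) := \Hom_{\cM'}(\e',X), \quad \Hom_\cM(X,\e) := \Hom_{\cM'}(X,K') \quad\text{for }X\in\cM', \\
&\Hom_\cM(\e,\e) := \End_{\cM'}(\e').
\end{align*}
Composition is inherited from $\cM'$ whenever possible, with $f$ inserted whenever a composite passes ``through'' $\e$: the composition of $\alpha:\e\to X$ with $\beta:X\to\e$ is declared to be $f\circ\beta\circ\alpha\in\End_{\cM'}(\e')=\Hom_\cM(\e,\e)$, and the composition of $\gamma:A\to\e$ with $\delta:\e\to B$ (for $A,B\in\cM'$) is declared to be $\delta\circ f\circ\gamma\in\Hom_{\cM'}(A,B)$. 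The tensor product makes $\e$ a strict unit, agrees with the given monoidal structure on $\cM'$ for morphisms between objects of $\cM'$, and inserts a factor of $f$ whenever one needs to convert a $K'$-valued component back into an $\e'$-valued one. The duality functor is defined by $D_\cM\e:=\e$, $D_\cM|_{\cM'}:=D$; on morphisms involving $\e$ it is the obvious map via the canonical identifications $D\e'\cong K'$ and $DK'\cong\e'$.

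The verification then breaks into three steps. (i) $\cM$ is a monoidal category: associativity of composition and of the tensor product, bifunctoriality, and the pentagon and triangle axioms. (ii) $\e$ is dualizing in $\cM$, so $\cM$ is an r-category; the required adjunction $\Hom_\cM(X\otimes Y,\e)\cong\Hom_\cM(X,D_\cM Y)$ reduces by case analysis (each of $X,Y$ is either $\e$ or lies in $\cM'$) to the \GV adjunction in $\cM'$ together with the identifications built into the Hom-sets. (iii) $e:=\e'$ is a closed idempotent in $\cM$ via $\pi:=\id_{\e'}\in\Hom_\cM(\e,e)$; one has $D_\cM^2 e\cong e$ canonically (since $D^2\cong\Id$ on $\cM'$); the Hecke subcategory $e\cM e$ coincides with $\cM'$ as a \GV category with dualizing object $D_\cM e = K'$; and $\pi\circ D_\cM\pi$ equals $f$, since $\pi=\id_{\e'}$ and $D_\cM\pi=\id_{K'}$, and the composition rule through $\e$ yields $\id_{\e'}\circ f\circ\id_{K'}=f$.

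The main obstacle will be step (i). Checking coherence of composition and of the tensor product when morphisms involving $\e$ appear in various positions requires some care, and the hypothesis $Df=f$ enters precisely at this point: it is needed to reconcile two \emph{a priori} distinct ways of inserting a factor of $f$ into a composite where $\e$ figures on both sides (the ``source'' insertion versus the ``target'' insertion). Once $Df=f$ is invoked, every coherence check reduces to an instance of the \GV adjunction in $\cM'$, and the remaining steps (ii)--(iii) become essentially bookkeeping that has already been encoded in the definition of the Hom-sets.
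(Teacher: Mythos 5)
Your overall strategy coincides with the paper's: formally adjoin a new unit object $\e$ below $\e'$, with $\Hom_\cM(\e,X)=\Hom_{\cM'}(\e',X)$ and $\Hom_\cM(X,\e)=\Hom_{\cM'}(X,K')$ for $X\in\cM'$, insert $f$ when a composite factors through $\e$, and take $e=\e'$, $\pi=\id_{\e'}$. Your identification of where $Df=f$ enters (reconciling the ``left'' and ``right'' ways of inserting $f$ into a tensor-bifunctoriality square with $\e$ on both sides) also matches the paper, where the hypothesis is repackaged as the commutativity of the diagram asserting that $f\tens\id_{K'}$ and $\id_{K'}\tens f$ agree as morphisms $K'\tens K'\rar{}K'$.

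However, your choice $\Hom_\cM(\e,\e):=\End_{\cM'}(\e')$ is a genuine error, for two reasons. First, it does not type-check against composition: for $g\in\End_\cM(\e)=\End_{\cM'}(\e')$ and $\gamma\in\Hom_\cM(X,\e)=\Hom_{\cM'}(X,K')$ you need $g\circ\gamma\in\Hom_{\cM'}(X,K')$, but $g:\e'\rar{}\e'$ and $\gamma:X\rar{}K'$ simply do not compose in $\cM'$, and there is no way to make $\End_{\cM'}(\e')$ act on the left of $\Hom_{\cM'}(X,K')$ covariantly (the identification $\e'\cong DK'$ gives an \emph{anti}-isomorphism $\End(\e')\cong\End(K')$, hence a right action, not a left one). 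Second, even bracketing that, the identity of $\e$ must be a \emph{new formal morphism}: no element of $\Hom_{\cM'}(\e',K')$, once $f$ is inserted in the middle, can serve as a two-sided identity of $\e$ unless $f$ is invertible, and taking $\id_\e:=\id_{\e'}$ would force $\pi$ to be an isomorphism, trivializing the construction. The correct Hom-set, as in the paper's \S\ref{ss:M-abstract}, is $\Hom_\cM(\e,\e)=\Hom_{\cM'}(\e',K')\sqcup\{\id_\e\}$ with the ``inserted-$f$'' composition $g_2\tilde\circ g_1:=g_2\circ f\circ g_1$ on the first summand; then every endomorphism of $\e$ other than $\id_\e$ is of the form $\de\circ g\circ\pi$ with $g\in\Hom_{\cM'}(\e',K')$ and $\de:K'\rar{}\e$, $\pi:\e\rar{}\e'$ the structural maps, which is exactly what the universal property demands. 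With that correction your steps (ii)--(iii) go through as you describe.
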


The proof of Proposition~\ref{p:inverse-construction} will be given in
\S\S\ref{ss:M-abstract}--\ref{ss:proof-p:inverse-construction}. In fact, given $(\cM',K')$ and $f$ we will construct there a concrete $r$-category $\cM$ and a closed idempotent $e\in\cM$ such that
$(\cM',K',f)$ arises from $(\cM ,e)$. One can characterize this pair $(\cM ,e)$ by a universal property, see Remark~\ref{r:univ_property}.

\begin{rem}
The assumption $Df=f$ in Proposition~\ref{p:inverse-construction} is
\emph{not} satisfied automatically, see \S\ref{ss:not_automatic}.
\end{rem}


\part{Proofs of the main results}

\section{Rigidity}\label{s:proof-rigidity}
In this section we prove Proposition~\ref{p:rigidity} and Proposition~\ref{p:Denis}.

\subsection{Recollections on rigid duals}  \label{ss:rigidity}
Let $\cM$ be  a monoidal category and $\eps :A\tens B\rar{}\e$ a morphism.
\begin{defin}\label{d:left-dual}
We say\footnote{Some authors use the opposite convention for ``left'' and ``right''. The advantage of our convention is that if $\cM$ is the category of endofunctors of some category then the left dual is the same as a left adjoint functor.} that
$(A,\eps )$ is a \emph{left rigid dual} of $B$ or that $(B,\eps )$ is a \emph{right rigid dual} of $A$ if
there exists $c:\e\rar{}B\tens A$ such that the compositions
\begin{equation}\label{e:left-dual-second-composition}
A = A\tens\e \xrar{\ \ \id_{A}\tens c \ \ } A\tens B\tens A \xrar{\ \ \eps\tens\id_{A}\ \ } \e\tens A=A
\end{equation}
and
\begin{equation}\label{e:left-dual-first-composition}
B = \e\tens B \xrar{\ \  c \tens\id_B\ \ } B\tens A\tens B \xrar{\ \ \id_B\tens\eps\ \ } B\tens\e = B
\end{equation}
are equal to $\id_A$ and $\id_{B}\,$, respectively.
An object of $\cM$ is said to be \emph{rigid} if it has a left rigid dual and a right one.
$\cM$ is said to be \emph{rigid} if each of its objects is.
\end{defin}

It is well known that the left or right rigid dual of an object $X\in\cM$ is unique up to unique isomorphism.
We denote the left rigid dual of $X$ by $X^*$ and the right one by ${}^*\!X$.
It is also known that in the situation of Definition~\ref{d:left-dual} the morphism $c:\e\rar{}B\tens A$
is unique. We will formulate a criterion for its existence, which goes back to
\cite{deligne-cat-tan,joyal-street}.

\mbr

If $\cM$ is a monoidal category and $X,Y\in\cM$ are objects such that the functor $Z\mapsto \Hom(X\tens Z,Y)$ is representable, then, following \cite{deligne-cat-tan}, we denote the representing object by $\HOM(X,Y)$.

\mbr

\begin{prop}\label{p:criterion-for-duality}
Let $\cM$ be a monoidal category and $\eps :A\tens B\rar{}\e$ a morphism in $\cM$. The following statements are equivalent:
 \sbr
\begin{enumerate}
\item[$($i$)$] $(B,\eps )$ is a right rigid dual of $A$
$($equivalently, $(A, \eps )$ is a left rigid dual of $B${}$)$;
 \sbr
\item[$($ii$)$] $\HOM(A,Y)$ exists for each $Y\in\cM$; moreover,
the morphism $B\tens Y\rar{}\HOM(A,Y)$ that comes from
$\eps\tens\id_Y:A\tens B\tens Y\rar{}\e\tens Y=Y$
is an isomorphism for every $Y\in\cM$;
 \sbr
\item[$($ii$')$] for all $Y,Z\in\cM$, the map
\begin{equation}\label{e:JS-map}
\Hom(Z,B\tens Y) \rar{} \Hom(A\tens Z,Y)
\end{equation}
that takes an element $f\in\Hom(Z,B\tens Y)$ to the composition
\[
A\tens Z \xrar{\ \ \id_A\tens f\ \ } A\tens B\tens Y \xrar{\ \ \eps\tens\id_Y\ \ } \e\tens Y = Y
\]
is bijective;
 \sbr
\item[$($iii$)$]
$\HOM(A,\e)$ and $\HOM(A,A)$ exist; in addition, the morphisms
$B\rar{}\HOM(A,\e)$ and $B\tens A\rar{}\HOM(A,A)$ defined in $($ii$)$ are isomorphisms;
 \sbr
\item[$($iii$')$] the map \eqref{e:JS-map} is bijective for $Y=\e$, $Z=B$ and for $Y=A$, $Z=\e$;
 \sbr
\item[$($iii$'')$] the map  \eqref{e:JS-map} is injective for $Y=\e$, $Z=B$ and surjective for
$Y=A$, $Z=\e$.
\end{enumerate}
\end{prop}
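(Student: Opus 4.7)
The plan is to establish the chain of implications $(i) \Leftrightarrow (ii') \Rightarrow (iii') \Rightarrow (iii'') \Rightarrow (i)$, and to observe that $(ii) \Leftrightarrow (ii')$ and $(iii) \Leftrightarrow (iii')$ are purely formal reformulations via the representability of the functor $Z \mapsto \Hom(A \tens Z, Y)$. More precisely, by the Yoneda lemma, once $\HOM(A,Y)$ exists, the morphism $B \tens Y \to \HOM(A,Y)$ induced by $\eps \tens \id_Y$ is an isomorphism if and only if the map \eqref{e:JS-map} is bijective for that particular $Y$ and all $Z$. Conversely, bijectivity of \eqref{e:JS-map} for a given $Y$ and all $Z$ asserts exactly that $B \tens Y$ represents the functor $Z \mapsto \Hom(A \tens Z, Y)$, so $\HOM(A,Y)$ exists and the canonical morphism is an isomorphism. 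The implications $(ii') \Rightarrow (iii') \Rightarrow (iii'')$ are obvious.

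The substantive content lies in the two implications $(i) \Leftrightarrow (ii')$ and $(iii'') \Rightarrow (i)$. For $(i) \Rightarrow (ii')$, given $c: \e \to B \tens A$ as in Definition \ref{d:left-dual}, I would write down an explicit inverse to \eqref{e:JS-map} by sending $g: A \tens Z \to Y$ to the composition $Z = \e \tens Z \xrar{c \tens \id_Z} B \tens A \tens Z \xrar{\id_B \tens g} B \tens Y$; that this is a two-sided inverse follows by straightforward diagram-chasing from the two defining identities \eqref{e:left-dual-second-composition} and \eqref{e:left-dual-first-composition} for $c$.

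For the key direction $(iii'') \Rightarrow (i)$, I would proceed as follows. Surjectivity of \eqref{e:JS-map} for $Y = A$, $Z = \e$ furnishes a morphism $c: \e \to B \tens A$ whose image under \eqref{e:JS-map} is $\id_A$; by unwinding the definition, this says exactly that the composition \eqref{e:left-dual-second-composition} equals $\id_A$. It then remains to verify \eqref{e:left-dual-first-composition}, and here injectivity of \eqref{e:JS-map} for $Y = \e$, $Z = B$ is used: I claim that both $\id_B$ and $(\id_B \tens \eps) \circ (c \tens \id_B)$ map to $\eps$ under \eqref{e:JS-map}. The first is tautological. For the second, tensoring the identity $(\eps \tens \id_A) \circ (\id_A \tens c) = \id_A$ on the right with $\id_B$ and composing with $\eps$ shows that
\[
\eps = \eps \circ (\eps \tens \id_A \tens \id_B) \circ (\id_A \tens c \tens \id_B),
\]
and then the coherence identity $\eps \circ (\eps \tens \id_{A \tens B}) = \eps \circ (\id_{A \tens B} \tens \eps)$ (which holds in any monoidal category after inserting the unit isomorphisms, since both compositions equal $\eps \tens \eps$ up to the canonical identification $\e \tens \e \simeq \e$) lets me rewrite this as the image of $(\id_B \tens \eps) \circ (c \tens \id_B)$ under \eqref{e:JS-map}.

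The main obstacle is the tracking of unit and associativity isomorphisms in the last computation, which I have glossed over above; in a strict monoidal category these are invisible, but in the general case one must carefully insert the coherence isomorphisms to make the interchange step rigorous. Once that coherence bookkeeping is done, the chain of equivalences closes up, and the uniqueness of $c$ (implicit in the statement preceding the proposition) also falls out of the bijectivity in $(ii')$ applied to $Y = A$, $Z = \e$.
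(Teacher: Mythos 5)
Your proof is correct and takes essentially the same route as the paper: the paper likewise reduces everything to $(iii'')\Rightarrow(i)$ (citing Deligne and Joyal--Street for $(i)\Leftrightarrow(ii)$ and $(i)\Leftrightarrow(ii')$, while you spell out $(i)\Rightarrow(ii')$ directly), and your argument for $(iii'')\Rightarrow(i)$ — extract $c$ from surjectivity, then show $\eps\circ(\id_A\tens\al)=\eps$ and invoke injectivity with $Y=\e$, $Z=B$ — is exactly the paper's. One small imprecision: you assert that $(iii)\Leftrightarrow(iii')$ is a ``purely formal reformulation,'' but only $(iii)\Rightarrow(iii')$ is formal; condition $(iii')$ asserts bijectivity of \eqref{e:JS-map} only at two specific pairs $(Y,Z)$, which does not by itself give representability of $Z\mapsto\Hom(A\tens Z,\e)$, so the converse $(iii')\Rightarrow(iii)$ is recovered only by going around the cycle through $(iii'')\Rightarrow(i)\Rightarrow(ii)\Rightarrow(iii)$.
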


\begin{rems}
\begin{enumerate}[(1)]
\item It is easy to see that
$(ii)\Leftrightarrow (ii')$ and $(iii)\Rightarrow (iii')$.
Tautologically, $(ii)\Rightarrow(iii)$ and $(ii')\Rightarrow(iii')\Rightarrow(iii'')$.
 \sbr
\item The equivalence between $(ii)$ and $(i)$ is proved in \cite[Prop.~2.3]{deligne-cat-tan}.
The equivalence between $(ii')$ and $(i)$ is stated in \cite[p.~70]{joyal-street}. So it remains to
prove that $(iii'')\Rightarrow(i)$.
\end{enumerate}
\end{rems}

\begin{proof}[Proof of the implication $(iii'')\Rightarrow(i)$ in Proposition \ref{p:criterion-for-duality}] Applying hypothesis $(iii'')$ with $Y=A$ and $Z=\e$, we see that there is a morphism $c:\e\rar{}B\tens A$ such that the composition \eqref{e:left-dual-second-composition}
equals $\id_A$. Now let $\al$ denote the composition \eqref{e:left-dual-first-composition}.
It remains to show that $\al=\id_B$. Using the fact that the composition
\eqref{e:left-dual-second-composition} equals $\id_A$, it is easy to check that the composition
\[
A\tens B \xrar{ \id_A\tens\al } A\tens B \xrar{\ \ \eps\ \ } \e
\]
equals $\eps$. Thus the assumption of $(iii'')$ with $Y=\e$ and $Z=B$ forces $\al=\id_B$.
\end{proof}

\subsection{Proof of Proposition~\ref{p:rigidity}}   \label{ss:proof_of_rigidity_prop}
\begin{proof}
We will apply the equivalences (i)$\Leftrightarrow$(ii)$\Leftrightarrow$(iii) from
Proposition~\ref{p:criterion-for-duality}.

\mbr

First, recall that by Remark~\ref{r:r}(iii), the existence of
$\HOM (A,Y)$ is automatic; namely,
\[
\HOM (A,Y)=D^{-1}(DY\tens A),
\]
which can also be written as $\HOM (A,Y)=D^{-1}A\odot Y$ by formula \eqref{e:otherproduct}.
In particular, $\HOM (A,\e )=D^{-1}A$.

\mbr

So the condition that the morphism $B\rar{}\HOM(A,\e )$ is an isomorphism
(see Proposition~\ref{p:criterion-for-duality}) is equivalent to condition (a) of Proposition \ref{p:rigidity}. If it holds, the morphism $B\tens Y\rar{}\HOM(A,Y)$ from Proposition~\ref{p:criterion-for-duality}
can be considered as a morphism $B\tens Y\rar{}\HOM (DB,Y)=B\odot Y$, and one checks that it
equals the morphism $B\tens Y\rar{} B\odot Y$ defined in \S\ref{sss:odot}.

\mbr

Now applying the equivalence (i)$\Leftrightarrow$(iii) from
Proposition~\ref{p:criterion-for-duality} we see that $(B,\eps )$ is
a right rigid dual of $A$ if and only if (a) and (b) hold. Applying the equivalence (i)$\Leftrightarrow$(ii)
we see that in this case the canonical morphism $B\tens Y\rar{} B\odot Y$ is an isomorphism for every $Y\in\cM$. Now replacing $\tens :\cM\times\cM\rar{}\cM$ with the opposite tensor product and $B$ with $A$ we see that the canonical isomorphism $Y\tens A\rar{} Y\odot A$  is an isomorphism for every
$Y\in\cM$.
 \end{proof}

\subsection{Proof of Proposition~\ref{p:Denis}}  \label{sss:inv&rig}

\begin{lem}   \label{l:HomKK}
Let $(\cM ,K)$ be a Grothendieck-Verdier category. Then the canonical morphisms
$\e\rar{}\HOM (K,K)$ and $\e\rar{}\HOM' (K,K)$ are isomorphisms.
\end{lem}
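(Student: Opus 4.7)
By symmetry (applying the same argument to $\cM$ equipped with the opposite tensor product, cf.~Remark~\ref{r:r}(i), which swaps $D$ with $D^{-1}$ and hence $\HOM$ with $\HOM'$), it suffices to prove the statement about $\HOM$. The plan is a Yoneda argument: the canonical morphism $\eta\colon\e\to\HOM(K,K)$ corresponds under the adjunction \eqref{e:internal_Hom2b} to the morphism $K\tens\e\simeq K\xrightarrow{\id_K}K$, so by Yoneda $\eta$ is an isomorphism if and only if the induced natural transformation
\[
\eta_*\colon\Hom(-,\e)\to\Hom(-,\HOM(K,K))\simeq\Hom(K\tens -,K)
\]
is an isomorphism (the second arrow being \eqref{e:internal_Hom2b}).

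To check this, I would introduce the natural isomorphism
\[
\psi\colon\Hom(K\tens -,K)\iso\Hom(-,D^{-1}K)\iso\Hom(-,\e)
\]
obtained by composing \eqref{e:duality2} (applied with the first variable set to $K$) with the canonical isomorphism $D^{-1}K\iso\e$ from Remark~\ref{r:r}(iv). Since $\psi$ is already a natural isomorphism, it suffices to prove $\psi\circ\eta_*=\id$; by naturality and Yoneda this reduces to the single verification $\psi_\e(\id_K)=\id_\e$ in $\Hom(\e,\e)$.

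The verification itself is a direct calculation. By the defining property of the canonical isomorphism $\al\colon D\e\iso K$ from \eqref{e:D1}, the element $\id_K\in\Hom(K\tens\e,K)$ corresponds under \eqref{e:duality1} (with $X=K$ and $Y=\e$) to $\al^{-1}\colon K\to D\e$. The passage from \eqref{e:duality1} to \eqref{e:duality2} amounts to applying the contravariant antiequivalence $D^{-1}$ on $\Hom$-sets, so $\id_K$ corresponds under \eqref{e:duality2} to $D^{-1}(\al^{-1})\colon\e\to D^{-1}K$; but this is precisely the inverse of the canonical isomorphism $D^{-1}\al\colon D^{-1}K\iso\e$ used in the definition of $\psi$. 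Composing with that isomorphism gives $\id_\e$, as required. I expect the main obstacle to be purely bookkeeping---keeping careful track of the various canonical isomorphisms $D\e\iso K$, $D^{-1}K\iso\e$, etc., and of the contravariance of $D$ and $D^{-1}$.
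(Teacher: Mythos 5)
Your proof is correct and follows the same route as the paper's one-line argument (``Use \eqref{e:internal_Hom2a}--\eqref{e:internal_Hom1b} and \eqref{e:D1}''), relying on the same canonical isomorphisms $D\e\cong K$ and $D^{-1}K\cong\e$. You have simply made explicit the Yoneda bookkeeping that verifies the \emph{canonical} unit morphism $\e\to\HOM(K,K)$, rather than merely some abstract isomorphism, realizes the identification.
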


Here $\HOM$ and $\HOM'$ are the internal $\Hom$'s, see Remark~\ref{r:r}(iii).

\begin{proof}
Use \eqref{e:internal_Hom2a}-\eqref{e:internal_Hom1b} and \eqref{e:D1}.
\end{proof}

Now let us prove Proposition~\ref{p:Denis}, which says that
a dualizing object of a monoidal category is invertible if and only if it is rigid.

\begin{proof}
Any invertible object is rigid. Now suppose that a dualizing object $K$ of a monoidal category $\cM$
is rigid. Let ${}^*\!K$ (resp. $K^*$) be its right (resp. left) rigid dual. By
Proposition~\ref{p:criterion-for-duality}(iii), ${}^*\!K\tens K\simeq\HOM (K,K)$, so
Lemma~\ref{l:HomKK} shows that ${}^*\!K\tens K\simeq\e$. Similarly, $K\tens K^*\simeq\e$.
\end{proof}

\section{The monoidal structure on $D^2$}  \label{s:proof_D^2monoidal}
\subsection{Proof of Proposition~\ref{p:DD'}} \label{ss:proof_D^2monoidal}
We first make an obvious remark, then formulate its
``categorification,'' and finally explain how to apply it to define
a monoidal structure on $D^2:\cM\rar{}\cM$, which, in fact, equals the one defined by
\eqref{e:D^24}.

\sbr

\subsubsection{Obvious remark} Let $A$ be an associative ring and let $N$ be
an $(A,A)$-bimodule. Suppose that for some $n_0\in N$ the maps $A\to
N$ defined by $a\mapsto n_0a$ and $a\mapsto an_0$ are  injections
with the same image. Define the map $\varphi :A\to A$ by the
equality $an_0=n_0\varphi (a)$. Then $\varphi$ is a ring
automorphism.

\sbr

\subsubsection{Categorification: a way to construct monoidal auto-equivalences}
\label{sss:Categorification}
Let $\cA$ be a monoidal category and let $\cN$
be an $(\cA ,\cA )$-bimodule category (i.e., we are given a monoidal functor from
$\cA\times\cA^{opp}$ to the monoidal category of functors
$\cN\to\cN$, where $\cA^{opp}$ is the category $\cA$ equipped with the opposite tensor product).
Suppose that for some $n_0\in\cN$ the functors
$\cA\to\cN$ defined by $X\mapsto n_0\otimes X$ and $X\mapsto X\tens
n_0$ are  fully faithful and have the same essential image. Then
there exists an equivalence $\Phi :\cA\iso\cA$ such that one has
isomorphisms $f_X:X\tens n_0\rar{\simeq} n_0\tens\Phi (X)$ functorial in
$X\in\cA$; such a pair $(\Phi ,f)$ is unique up to unique isomorphism.
We claim that $\Phi$ has a canonical structure of {\em monoidal\,}
equivalence. Namely, define $u_{X_1,X_2}:\Phi (X_1\tens X_2)\rar{\simeq}\Phi
(X_1)\tens\Phi (X_2)$ so that the diagram
\begin{equation}    \label{e:Phi_monoidal}
\xymatrix{
 X_1\tens X_2\tens n_0  \ar[d]_{\id_{X_1}\tens f_{X_2}} \ar[rr]^{f_{X_1\tens X_2}\ \ \ \ \ \ \ } & &
 n_0\tens \Phi (X_1\tens   X_2)\ar[d]^{\id_{n_0}\tens u_{X_1,X_2}} \\
 X_1\tens n_0\tens \Phi (X_2) \ar[rr]^{\ \ \ f_{X_1}\tens\id_{\Phi (X_2)}\ \ \ \ \ \ } & &
 n_0\tens \Phi (X_1)\tens\Phi (X_2)}
\end{equation}
commutes. Similarly, we have a natural isomorphism $\Phi(\e)\rar{\simeq}\e$. The isomorphisms $u_{X_1,X_2}$ are compatible with
the associativity constraint and the unit constraints and thus define a structure of
monoidal functor on $\Phi$.

\sbr

\subsubsection{Application}      \label{sss:Application}
Take $\cA =\cM^{\circ}$, where $\cM^{\circ}$ is the category dual to $\cM$.
Let $\cN$ be the
category of functors $F: \cM^{\circ}\to$\{Sets\}. It has a canonical
structure of $(\cM^{\circ},\cM^{\circ})$-bimodule category such that
$(m_1\tens F\tens m_2 )(m)=F(m_2\tens m\tens m_1)$ (the
associativity constraints of the bimodule category are the obvious ones).
Let $\cY:\cM\into\cN$ be the Yoneda embedding and $n_0:= \cY (K )\in\cN$.
Using \eqref{e:duality1} and \eqref{e:duality2} one checks that
$m\otimes n_0= \cY (Dm)$ and $n_0\otimes m= \cY (D^{-1}m)$. So the above
construction of a monoidal equivalence $\Phi :\cM^{\circ}\iso\cM^{\circ}$ is applicable and the functor
$\cM\rar{\sim}\cM$ corresponding to $\Phi$ equals $D^2$. Thus we get a
structure of a monoidal functor on $D^2$.

\sbr

\subsubsection{Conclusion}
One checks that the isomorphism
$u_{X_1,X_2}:D^2 (X_1\tens X_2)\rar{\simeq} D^2 (X_1)\tens D^2 (X_2)$ defined above equals
the isomorphism~\eqref{e:D^24} and that the isomorphism $\e\rar{\simeq}D^2(\e)$ defined above equals the isomorphism \eqref{e:D^2(1)}. Proposition~\ref{p:DD'} follows.

\subsection{A remark (to be used in  \S\ref{s:proof-p:monoidal-iso-id-DD})}
\label{ss:isomorphism_monoidal}
Suppose that in the situation of \S\ref{sss:Categorification} we have two pairs $(\Phi ,f)$
and $(\tilde\Phi ,\tilde f)$, so the functors $\Phi$ and $\tilde\Phi$ are both monoidal.
Let $\alpha :\tilde\Phi\rar{\simeq}\Phi$ be the unique isomorphism such that the diagram
\[
\xymatrix{
  n_0\tens \tilde\Phi (X)  \ar[rr]^{\id_{n_0}\tens\al_X} & & n_0\tens \Phi (X)  \\
   & X \tens  n_0 \ar[ul]^{\tilde f}_{\simeq} \ar[ur]^{\simeq}_ f &
   }
 \]
commutes. Then \emph{$\alpha$ is monoidal.}
To see this, compare \eqref{e:Phi_monoidal} with a similar commutative square for
$(\tilde\Phi ,\tilde f)$ by drawing a cube.

\section{Proof of Lemma~\ref{l:braided-GV-category-phi-pm}}
\subsection{An abstract lemma}
In this subsection $\cM$ is an \emph{abstract} category rather than a monoidal one.
The following lemma is used in the proof of Lemma ~\ref{l:braided-GV-category-phi-pm} and Proposition~\ref{p:2pivotal}.

\begin{lem}  \label{l:attempt2}
Let $\cM$ be a category equipped with an anti-equivalence $D:\cM\rar{}\cM$. Let $S$ be the set of functorial families of bijections
\[
\ga_{X,Y} : \Hom(X,DY) \rar{\simeq} \Hom(Y,DX), \qquad X,Y\in\cM.
\]
 \sbr
\begin{enumerate}[$($a$)$]
\item For each $\ga\in S$ there is a unique isomorphism $\vp^\ga:D^{-1}\rar{\simeq}D$ such that for all $X,Y\in\cM$ the corresponding map $\Hom(Y,D^{-1}X)\rar{\simeq}\Hom(Y,DX)$ is equal to the composition
    \begin{equation}\label{e:def-varphi-gamma}
    \Hom(Y,D^{-1}X) \rar{D} \Hom(X,DY) \xrar{\ \ \ga_{X,Y}\ \ } \Hom(Y,DX).
    \end{equation}
    The map $S\rar{}\Isom(D^{-1},D)$ given by $\ga\mapsto\vp^\ga$ is a bijection.
 \sbr
\item Define an involution $\vee:S\rar{\simeq}S$ by $(\ga^\vee)_{X,Y}=\ga_{Y,X}^{-1}$. Define a bijection \[\vee:\Isom(D^{-1},D)\rar{\simeq}\Isom(D^{-1},D)\] by $(\vp^\vee)_X:=D(\vp^{-1}_{D^{-1}X})$. Then
\begin{equation}\label{e:duality-key-1}
\vp^{\ga^\vee} = (\vp^\ga)^\vee \qquad\forall\,\ga\in S.
\end{equation}
In particular, $\vee$ is also an involution on $\Isom(D^{-1},D)$ and can alternatively be defined by the formula $(\vp^\vee)_X=D^{-1}(\vp_{DX}^{-1})$ for all $X\in\cM$.
\end{enumerate}
\end{lem}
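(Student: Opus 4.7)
My plan is to run the proof entirely on Yoneda's lemma, with essentially no content beyond that plus bookkeeping. For part (a), given $\ga\in S$, I would freeze $X\in\cM$ and observe that the composition in \eqref{e:def-varphi-gamma} is a natural transformation $\Hom(-,D^{-1}X)\to\Hom(-,DX)$ of functors in $Y$, so by Yoneda it is represented by a unique morphism $\vp^\ga_X:D^{-1}X\to DX$. Since $D$ is an anti-equivalence, the first map in \eqref{e:def-varphi-gamma} is a bijection, and $\ga_{X,Y}$ is one by hypothesis; hence the composition is a bijection, which by Yoneda forces $\vp^\ga_X$ to be an isomorphism. Naturality of $\vp^\ga$ in $X$ will follow from functoriality of $\ga_{X,Y}$ in $X$ combined with the contravariant functoriality of $D^{\pm 1}$. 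An inverse to $\ga\mapsto\vp^\ga$ is produced by sending $\vp\in\Isom(D^{-1},D)$ to the family $\ga_{X,Y}(h):=\vp_X\circ D^{-1}(h)$ for $h\in\Hom(X,DY)$; from the construction one reads off $\ga\in S$ and $\vp^\ga=\vp$, yielding the claimed bijection.

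For part (b), the operation $\vee:S\rar{\simeq}S$ is manifestly an involution, since $((\ga^\vee)^\vee)_{X,Y}=(\ga^\vee_{Y,X})^{-1}=(\ga_{X,Y}^{-1})^{-1}=\ga_{X,Y}$. To verify \eqref{e:duality-key-1}, the Yoneda uniqueness clause reduces the check to the statement that, for every $g:Y\to D^{-1}X$, the morphism $D\bigl((\vp^\ga)^{-1}_{D^{-1}X}\bigr)\circ g$ agrees with $(\ga^\vee)_{X,Y}(Dg)$ in $\Hom(Y,DX)$. Using the explicit inverse-construction formula from (a), namely $\ga_{Y,X}(k)=\vp^\ga_Y\circ D^{-1}(k)$, I would invert to obtain a direct expression for $(\ga^\vee)_{X,Y}=\ga_{Y,X}^{-1}$, and the desired identity then reduces to a routine manipulation exploiting the contravariance of $D$. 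Once \eqref{e:duality-key-1} is established, the bijection from (a) transports the involution $\vee$ on $S$ to the operation $\vee$ on $\Isom(D^{-1},D)$, so the latter is automatically an involution as well.

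Finally, for the alternative formula, let $\vee'$ denote the operation $(\vp^{\vee'})_X:=D^{-1}(\vp^{-1}_{DX})$. Substituting $DX$ for $X$ in the first definition and invoking naturality of $\vp$ yields $\vee\circ\vee'=\id$, and therefore $\vee'=\vee^{-1}=\vee$. The main obstacle I anticipate is the careful bookkeeping of $D$ versus $D^{-1}$ and of the direction of various arrows, especially in \eqref{e:duality-key-1} where one must juggle the operator $\vee$ on $\Isom(D^{-1},D)$ against the operator $\vee$ on $S$ and keep track of how the quasi-inverse relations between $D$ and $D^{-1}$ intervene; no conceptual ingredient beyond Yoneda's lemma and functoriality is needed.
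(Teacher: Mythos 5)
Your proposal is correct and follows essentially the same Yoneda-based strategy as the paper: part (a) by Yoneda, and part (b) by unwinding the relation between $\ga$ and $\vp^\ga$ into a morphism-level identity. The only cosmetic difference is in how you check \eqref{e:duality-key-1} — the paper derives an explicit Yoneda formula for $(\vp^\ga_X)^{-1}$ and compares it directly with the formula for $\vp^{\ga^\vee}_X$, whereas you invert your formula $\ga_{Y,X}(k)=\vp^\ga_Y\circ D^{-1}(k)$ and then must invoke naturality of $\vp^\ga$ (not merely contravariance of $D$, as your last sentence of part (b) suggests) to close the loop; this works, but be sure to cite naturality explicitly there rather than folding it into ``routine manipulation.''
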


\begin{proof}
Yoneda's lemma implies statement (a) and gives explicit formulas for $\vp^\ga$ and $(\vp^\ga)^{-1}$. Namely, to obtain a formula for $\vp^\ga_X$, where $X\in\cM$, apply the composition \eqref{e:def-varphi-gamma} for $Y=D^{-1}X$ to $\id_{D^{-1}X}\in\Hom(D^{-1}X,D^{-1}X)$; to obtain a formula for $(\vp^\ga_X)^{-1}$, consider the composition \eqref{e:def-varphi-gamma} for $Y=DX$ and apply its inverse to $\id_{DX}\in\Hom(DX,DX)$. Thus
\begin{equation}\label{e:varphi-gamma-X}
\vp^\ga_X = \ga_{X,D^{-1}X}(\id_X)
\end{equation}
and
\begin{equation}\label{e:varphi-gamma-X-inverse}
(\vp^\ga_X)^{-1} = D^{-1}\bigl( \ga_{X,DX}^{-1}(\id_{DX}) \bigr).
\end{equation}
Now let us deduce \eqref{e:duality-key-1} from \eqref{e:varphi-gamma-X}--\eqref{e:varphi-gamma-X-inverse}. By \eqref{e:varphi-gamma-X}, $\vp^{\ga^\vee}_X=\ga^{-1}_{D^{-1}X,X}(\id_{D^{-1}X})$. Comparing this with \eqref{e:varphi-gamma-X-inverse}, we see that $\vp^{\ga^\vee}_X = D\bigl( (\vp^\ga_{D^{-1}X})^{-1} \bigr)$, which is equivalent to \eqref{e:duality-key-1}. The remaining assertions of (b) follow at once.
\end{proof}

\begin{cor}     \label{c:abstract}
Let $\ga$ and $\varphi^{\ga}$ be as in Lemma~\ref{l:attempt2}. For brevity, set
$\varphi:=\varphi^{\ga}$. Then the following properties are equivalent:
\begin{enumerate}[$($i$)$]
\item $\ga_{X,Y}\circ\ga_{Y,X}=\id\,$ for all $X,Y\in\cM$.
 \sbr
\item  $\varphi_X=D(\varphi_{D^{-1}X})^{-1}$ for all $X\in\cM$;
\end{enumerate}
\end{cor}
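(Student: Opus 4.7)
The plan is to deduce the corollary directly from Lemma \ref{l:attempt2}(b). The key observation is that each of the two conditions in Corollary \ref{c:abstract} is simply the statement that a certain object is a fixed point of the involution $\vee$ introduced in Lemma \ref{l:attempt2}(b).

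First I would rewrite condition (i). Since $\gamma_{X,Y}$ and $\gamma_{Y,X}$ are bijections, the identity $\gamma_{X,Y} \circ \gamma_{Y,X} = \id$ is equivalent to $\gamma_{X,Y} = \gamma_{Y,X}^{-1} = (\gamma^\vee)_{X,Y}$ for all $X, Y \in \cM$, i.e., $\gamma = \gamma^\vee$. Next I would rewrite condition (ii). By the final sentence of Lemma \ref{l:attempt2}(b), the involution $\vee$ on $\Isom(D^{-1}, D)$ admits the alternative description $(\varphi^\vee)_X = D^{-1}(\varphi_{DX}^{-1})$. Applying $D$ and replacing $X$ by $D^{-1}X$, we see that $\varphi = \varphi^\vee$ is equivalent to $\varphi_X = D(\varphi_{D^{-1}X})^{-1}$ for all $X \in \cM$, which is exactly condition (ii).

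Finally I would combine these observations with Lemma \ref{l:attempt2}(a)-(b). Part (a) asserts that $\gamma \mapsto \varphi^\gamma$ is a bijection $S \rar{\simeq} \Isom(D^{-1}, D)$, and formula \eqref{e:duality-key-1} asserts that this bijection intertwines the two involutions called $\vee$. Therefore $\gamma$ is a fixed point of $\vee$ on $S$ if and only if $\varphi = \varphi^\gamma$ is a fixed point of $\vee$ on $\Isom(D^{-1}, D)$, which by the two rewritings above means precisely that (i) $\iff$ (ii).

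There is no real obstacle here; the corollary is essentially a restatement of Lemma \ref{l:attempt2}(b). The only point requiring a moment's care is translating condition (ii) into the form $\varphi = \varphi^\vee$, which requires the alternative formula for $\vee$ on $\Isom(D^{-1}, D)$ stated at the end of Lemma \ref{l:attempt2}(b) (rather than the one used to define it).
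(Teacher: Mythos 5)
Your proposal is correct and follows the same route as the paper's own (very brief) proof: identify (i) with $\ga=\ga^\vee$, identify (ii) with $\vp=\vp^\vee$ via the alternative formula for $\vee$ on $\Isom(D^{-1},D)$, and invoke Lemma~\ref{l:attempt2}(b) to conclude. You have just spelled out the translations a bit more explicitly.
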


\begin{proof}
Property~(i) means that $\ga^\vee=\ga$. Property~(ii) means that $\varphi^\vee=\varphi$. So
(i)$\Leftrightarrow$(ii) by Lemma~\ref{l:attempt2}(b).
\end{proof}

\subsection{Proof of Lemma~\ref{l:braided-GV-category-phi-pm}}     \label{ss:Mitya's_lemma}
Let $(\cM, K,\be )$ be a braided \GV category and $D:\cM\iso\cM$ the duality functor.
Let $\be^\pm$ be as in Lemma~\ref{l:braided-GV-category-phi-pm}.
Apply Lemma~\ref{l:attempt2} to the functorial families of isomorphisms
\[
\ga^\pm_{X,Y} : \Hom(X,DY) \rar{\simeq} \Hom(Y,DX), \qquad X,Y\in\cM,
\]
induced by the pullback maps
\[
(\be^\pm_{Y,X})^* : \Hom(X\tens Y,K) \rar{\simeq} \Hom(Y\tens X,K).
\]
The isomorphism $\vp^{\ga^\pm}$ from Lemma~\ref{l:attempt2}(a) equals the isomorphism $\vp^\pm$
from Lem\-ma~\ref{l:braided-GV-category-phi-pm}.
With the notation of Lemma~\ref{l:attempt2}(b), $(\ga^\pm)^\vee=\ga^\mp$. Thus the second assertion of Lemma~\ref{l:braided-GV-category-phi-pm} is equivalent to \eqref{e:duality-key-1}.

\mbr

To prove the first assertion, note that the composition
\[
\Hom(D^2 Y,DX) \xrar{\ \ D^{-1}\ \ } \Hom(X,DY) \xrar{\ \ \ga^\pm_{X,Y}\ \ } \Hom(Y,DX)
\]
equals $(\vt^\pm_Y)^*:\Hom(D^2Y,DX)\rar{\simeq}\Hom(Y,DX)$ by Definition \ref{d:vartheta}. Hence the map \[\Hom(Y,D^{-1}X)\rar{}\Hom(Y,DX)\] induced by $\vp^\pm_X:D^{-1}X\rar{}DX$ is equal to the composition
\[
\Hom(Y,D^{-1}X) \xrar{\ \ D^2\ \ } \Hom(D^2Y,DX) \xrar{\ \ (\vt^\pm_Y)^*\ \ } \Hom(Y,DX);
\]
more explicitly, $\vp^\pm_X\circ f=(D^2f)\circ\vt_Y^\pm$ for all $f\in\Hom(Y,D^{-1}X)$. Taking $X=DY$ and $f=\id_Y$ yields $\vt^\pm_Y=\vp^\pm_{DY}$, as claimed.

\section{Proof of Proposition~\ref{p:2pivotal}}   \label{s:pivotalproof}

Throughout this section we fix a \GV category $(\cM,K)$ together with a functorial family of isomorphisms
\[
\psi_{X,Y}:\Hom (X\otimes Y,K )\rar{\simeq}\Hom (Y\otimes X,K ), \qquad X,Y\in\cM,
\]
and let $f:\Id_{\cM}\rar{\simeq}D^2$ be the corresponding isomorphism (see Lemma~\ref{l:pivotal2}).

\subsection{Formulating the lemmas}
The following lemmas will be proved in
\S\ref{ss:proof-l:monoidality-of-f}--\ref{ss:proofpivot2}.

\begin{lem}\label{l:monoidality-of-f}
With the notation above, $f$ is monoidal if and only if
\begin{equation}\label{e:ugly}
\psi_{X\tens Y,Z} \circ \psi_{Y\tens Z,X} = \psi_{Y,Z\tens X} \qquad \forall\,X,Y,Z\in\cM.
\end{equation}
\end{lem}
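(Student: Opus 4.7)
The strategy is to translate condition \eqref{e:ugly}, which is formulated in terms of the $\psi_{X,Y}$, into an equivalent statement purely about $f$. Denote by $g_{X,Y}:\Hom(X\tens Y,K)\iso\Hom(D^2Y\tens X,K)$ the canonical isomorphism \eqref{e:duality3}. The commutative diagram \eqref{e:fpsi} can then be rewritten as
\[
\psi_{X,Y}(h) = g_{X,Y}(h)\circ(f_Y\tens\id_X),\qquad h\in\Hom(X\tens Y,K).
\]
I will use this formula to unpack both sides of \eqref{e:ugly} and read off what the resulting identity says about $f$.

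Fix $h\in\Hom(Y\tens Z\tens X,K)$ and set $h':=g_{Y,Z\tens X}(h)\in\Hom(D^2(Z\tens X)\tens Y,K)$. The right-hand side of \eqref{e:ugly} unfolds directly to
\[
\psi_{Y,Z\tens X}(h)=h'\circ(f_{Z\tens X}\tens\id_Y).
\]
For the left-hand side, first compute $\psi_{Y\tens Z,X}(h)=g_{Y\tens Z,X}(h)\circ(f_X\tens\id_{Y\tens Z})$, then apply $\psi_{X\tens Y,Z}$. Using naturality of $g$ in its first argument together with the identity
\[
g_{D^2X\tens Y,Z}\circ g_{Y\tens Z,X}=(u_{Z,X}^{-1}\tens\id_Y)^{*}\circ g_{Y,Z\tens X},
\]
which is precisely how the monoidal structure $u_{Z,X}:D^2(Z\tens X)\iso D^2Z\tens D^2X$ on $D^2$ is defined via \eqref{e:D^21}--\eqref{e:D^24}, a short and essentially mechanical calculation yields
\[
(\psi_{X\tens Y,Z}\circ\psi_{Y\tens Z,X})(h)=h'\circ\bigl((u_{Z,X}^{-1}\circ(f_Z\tens f_X))\tens\id_Y\bigr).
\]

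As $h$ ranges over $\Hom(Y\tens Z\tens X,K)$ and $Y$ over $\cM$, the element $h'$ ranges over all of $\Hom(D^2(Z\tens X)\tens Y,K)$ for every $Y$. Hence \eqref{e:ugly} is equivalent to the equality of the functorial families $h'\mapsto h'\circ(f_{Z\tens X}\tens\id_Y)$ and $h'\mapsto h'\circ((u_{Z,X}^{-1}\circ(f_Z\tens f_X))\tens\id_Y)$. By Lemma~\ref{l:var_Yoneda} applied with $Z_1=D^2(Z\tens X)$ and $Z_2=Z\tens X$, this is in turn equivalent to $u_{Z,X}\circ f_{Z\tens X}=f_Z\tens f_X$ for all $Z,X\in\cM$, which is exactly the multiplicativity axiom for $f$ to be a monoidal isomorphism $\Id_\cM\iso D^2$. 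Compatibility with the unit isomorphism $\e\iso D^2(\e)$ of \eqref{e:D^2(1)} is automatic because $\Id_\cM$ and $D^2$ are both strong monoidal and $f$ is a natural isomorphism (alternatively, substitute $X=Y=Z=\e$ in \eqref{e:ugly} and use Lemma~\ref{l:pivotal} to see directly that $\psi_{\e,\e}=\id$, which translates into $f_\e$ being the canonical map).

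The main obstacle I anticipate is purely bookkeeping, namely the careful tracking of associators (and left/right unitors) throughout the computation of $\psi_{X\tens Y,Z}\circ\psi_{Y\tens Z,X}$ and in the verification of the displayed identity relating a double application of $g$ to the monoidal structure $u$. This is essentially eliminated by reducing to a strict monoidal category via MacLane's coherence theorem, after which the calculations above become a direct matter of moving morphisms across tensor factors.
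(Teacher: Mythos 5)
Your proof is correct and is essentially the paper's argument, just phrased element-wise (applying the maps to a test morphism $h$ and reducing via Lemma~\ref{l:var_Yoneda}) rather than via the paper's commuting pentagon diagram; the key steps — unpacking $\psi$ through \eqref{e:fpsi}, using the defining property of $u_{Z,X}$ to collapse the double application of \eqref{e:duality3}, and comparing by Yoneda — are identical. The only addition is your explicit remark that unit compatibility is automatic, which the paper leaves implicit; that remark is correct (a monoidal natural isomorphism between strong monoidal functors satisfying the multiplicativity square is automatically unital, since $(-)\tens D^2\e$ is faithful as $D^2\e\cong\e$), and your alternative check via $\psi_{\e,\e}=\id$ also works.
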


\begin{lem}\label{l:equivalent-to-pretty1}
Identity \eqref{e:pretty1} holds if and only if $f_{DX}=D(f_X)^{-1}$ for all $X\in\cM$.
\end{lem}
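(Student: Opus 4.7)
The plan is to reduce the statement to the abstract Corollary~\ref{c:abstract} by transporting $\psi$ across the natural bijection \eqref{e:duality1}. First, I would define a functorial family of isomorphisms
\[
\ga_{X,Y}: \Hom(X,DY) \rar{\simeq} \Hom(Y,DX), \qquad X,Y\in\cM,
\]
by conjugating $\psi_{X,Y}$ with \eqref{e:duality1} applied both to $X\tens Y$ and to $Y\tens X$. Since \eqref{e:duality1} is a bijection, condition \eqref{e:pretty1} on $\psi$ translates verbatim into condition (i) of Corollary~\ref{c:abstract}, namely $\ga_{X,Y}\circ\ga_{Y,X}=\id$.

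Next, I would make $\ga_{X,Y}$ explicit in terms of $f$. If $\al\in\Hom(X,DY)$ corresponds to $\tilde\al\in\Hom(X\tens Y,K)$, then the commutative diagram \eqref{e:duality4} identifies the image of $\tilde\al$ under the isomorphism \eqref{e:duality3} with the map $D^2 Y\tens X\to K$ corresponding to $D\al:D^2Y\to DX$. Postcomposing with $(f_Y\tens\id_X)^*$ as prescribed by the diagram \eqref{e:fpsi}, and invoking the functoriality of \eqref{e:duality1} in its first argument, one obtains the closed formula
\[
\ga_{X,Y}(\al) = D(\al)\circ f_Y.
\]
Specializing to $Y=D^{-1}X$ and $\al=\id_X$ and applying formula \eqref{e:varphi-gamma-X} from the proof of Lemma~\ref{l:attempt2} yields
\[
\vp^{\ga}_X \;=\; \ga_{X,D^{-1}X}(\id_X) \;=\; f_{D^{-1}X}, \qquad X\in\cM.
\]

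Finally, I would invoke the equivalence (i)$\Leftrightarrow$(ii) from Corollary~\ref{c:abstract}: condition \eqref{e:pretty1} holds if and only if $\vp^{\ga}_X = D(\vp^{\ga}_{D^{-1}X})^{-1}$ for every $X\in\cM$, which in view of the formula above reads $f_{D^{-1}X} = D(f_{D^{-2}X})^{-1}$. Substituting $D^2X$ for $X$ (legitimate because $D$ is an auto-equivalence of $\cM$) turns this into the desired identity $f_{DX}=D(f_X)^{-1}$, completing the argument. The only calculation of substance is the explicit identification $\ga_{X,Y}(\al)=D(\al)\circ f_Y$; I do not anticipate any serious obstacle, but one must carefully track the direction of variance through the chain of identifications \eqref{e:duality1}, \eqref{e:duality3}, \eqref{e:duality4}, \eqref{e:fpsi} in order to arrive at the correct formula.
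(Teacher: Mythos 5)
Your argument is correct and follows essentially the same route as the paper: you transport $\psi$ through \eqref{e:duality1} to get a family $\ga$, identify $\vp^{\ga}_X=f_{D^{-1}X}$, and invoke Corollary~\ref{c:abstract}. The paper just states the identification $\vp_X=f_{D^{-1}X}$ by ``comparing Lemma~\ref{l:attempt2}(a) with Lemma~\ref{l:pivotal2},'' whereas you spell it out via the explicit (and correct) formula $\ga_{X,Y}(\al)=D(\al)\circ f_Y$.
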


\begin{lem}   \label{l:pivot2}
The following conditions are equivalent:
\begin{enumerate}[$($a$)$]
\item $f_K:K\rar{\simeq} D^2K$ equals the isomorphism \eqref{e:D^2K};
 \sbr
\item one has
\begin{equation}   \label{e:pretty6}
\psi_{\e,X}=\id \quad \mbox{ for all } X\in\cM ;
\end{equation}
 \sbr
\item $\psi_{\e,K}:\Hom (K,K)\to\Hom (K,K)$ maps $\id_K$ to itself.
\end{enumerate}
\end{lem}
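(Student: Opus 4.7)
The plan is to reduce everything to computing a single value, namely $\psi_{\e,K}(\id_K)$, by exploiting Yoneda's lemma and the explicit formula for $\psi$ in terms of $f$ provided by Lemma~\ref{l:pivotal2}. Specialized to $X=\e$, the commutative diagram \eqref{e:fpsi} says that for every $Y\in\cM$ the map
\[
\psi_{\e,Y}:\Hom(Y,K)=\Hom(\e\tens Y,K)\rar{\simeq}\Hom(Y\tens\e,K)=\Hom(Y,K)
\]
factors as $\psi_{\e,Y}=(f_Y)^*\circ\sigma_Y$, where $\sigma_Y$ denotes the isomorphism \eqref{e:duality3} for $X=\e$, that is, $\sigma_Y:\Hom(\e\tens Y,K)\iso\Hom(D^2Y\tens\e,K)$.

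First I would prove the equivalence (b)$\iff$(c). The family $\psi_{\e,-}$ is a natural endomorphism of the representable functor $\Hom(-,K)$ on $\cM$. By Yoneda's lemma, such a natural endomorphism is completely determined by its value on $\id_K\in\Hom(K,K)$; moreover it equals the identity natural transformation if and only if this distinguished value equals $\id_K$. Hence \eqref{e:pretty6} is equivalent to $\psi_{\e,K}(\id_K)=\id_K$, which is precisely condition (c).

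Next I would prove the equivalence (a)$\iff$(c) by direct computation of $\psi_{\e,K}(\id_K)$. Applying the formula $\psi_{\e,K}=(f_K)^*\circ\sigma_K$ to $\id_K$, one gets
\[
\psi_{\e,K}(\id_K)=\sigma_K(\id_K)\circ f_K.
\]
By Remark~\ref{r:r}(v), the element $\sigma_K(\id_K)\in\Hom(D^2K,K)$ is exactly the inverse of the canonical isomorphism~\eqref{e:D^2K}. Therefore $\psi_{\e,K}(\id_K)=\id_K$ holds if and only if $f_K$ coincides with \eqref{e:D^2K}, which is condition~(a).

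The main thing to verify carefully is the direction of the pullback in the formula $\psi_{\e,Y}(g)=\sigma_Y(g)\circ f_Y$ (making sure the compositions are in the intended order and that the identifications $\e\tens Y\cong Y\cong Y\tens\e$ are being used compatibly), since a sign/order slip there would swap $f_K$ with its inverse and make condition (a) look like its opposite. Apart from this bookkeeping the argument is essentially a one-line Yoneda calculation once Remark~\ref{r:r}(v) is invoked; no further input is needed.
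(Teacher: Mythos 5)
Your proof is correct and follows the same route as the paper: (b)$\iff$(c) by Yoneda applied to the natural endomorphism $\psi_{\e,-}$ of $\Hom(-,K)$, and (a)$\iff$(c) by specializing diagram~\eqref{e:fpsi} to $X=\e$, $Y=K$, evaluating at $\id_K$, and invoking Remark~\ref{r:r}(v) to identify $\sigma_K(\id_K)$ with the inverse of~\eqref{e:D^2K}. The only difference is cosmetic --- you spell out the bookkeeping a bit more than the paper does.
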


\subsection{Deducing Proposition~\ref{p:2pivotal} from the lemmas.}
Assume that $\psi$ is a pivotal structure. Identities \eqref{e:pretty2}--\eqref{e:pretty1} imply
\eqref{e:ugly}, so $f$ is monoidal by Lemma \ref{l:monoidality-of-f}. Next,
Lemma \ref{l:equivalent-to-pretty1} shows that $f_{DX}=D(f_X)^{-1}$ for all $X\in\cM$.
Taking $X=\e$, we obtain $f_K=D(f_\e)^{-1}$. Since $f$ is monoidal, $f_\e$ is equal to the isomorphism \eqref{e:D^2(1)}, and therefore $f_K$ is equal to the isomorphism \eqref{e:D^2K}. Thus
$f$ satisfies conditions (i)--(ii) of Proposition \ref{p:2pivotal} and also satisfies \eqref{e:Df1}.

\mbr

Now suppose that $f$ satisfies conditions (i)--(ii) of Proposition \ref{p:2pivotal}. Then
\eqref{e:ugly} and \eqref{e:pretty6}
hold by Lemmas~\ref{l:monoidality-of-f} and \ref{l:pivot2}.
Setting $Y=\e$ in \eqref{e:ugly}
and using \eqref{e:pretty6} we see that
\begin{equation}   \label{e:pretty5}
\psi_{X,Z}\circ\psi_{Z,X}=\id .
\end{equation}
Clearly \eqref{e:ugly} and \eqref{e:pretty5} imply that $\psi$ is a pivotal structure,
see Definition~\ref{d:pivotal1}.  \qed

\subsection{Proof of Lemma \ref{l:monoidality-of-f}}\label{ss:proof-l:monoidality-of-f} Fix $X,Y,Z\in\cM$.
According to the definition of the correspondence between $f$ and $\psi$
(see Lemma~\ref{l:pivotal2}), the isomorphism
\[
\psi_{Y,Z\tens X} : \Hom(Y\tens Z\tens X,K) \rar{\simeq} \Hom(Z\tens X\tens Y,K)
\]
is equal to the composition of
\[
\Hom(Y\tens Z\tens X,K) \xrar{\ \eqref{e:duality3}\ } \Hom(D^2(Z\tens X)\tens Y,K)
\]
and
\[
(f_{Z\tens X}\tens\id_Y)^* : \Hom(D^2(Z\tens X)\tens Y,K) \rar{\simeq} \Hom(Z\tens X\tens Y,K).
\]
Similarly, the isomorphism
\[
\psi_{X\tens Y,Z} \circ \psi_{Y\tens Z,X} : \Hom(Y\tens Z\tens X,K) \rar{\simeq} \Hom(Z\tens X\tens Y,K)
\]
is equal to the composition of the isomorphisms
\[
\Hom(Y\tens Z\tens X,K) \xrar{\ \eqref{e:duality3}\ } \Hom(D^2X\tens Y\tens Z,K) \xrar{\ \eqref{e:duality3}\ } \Hom(D^2Z\tens D^2X\tens Y,K)
\]
followed by
\[
(f_Z\tens f_X\tens\id_Y)^* : \Hom(D^2Z\tens D^2X\tens Y,K) \rar{\simeq} \Hom(Z\tens X\tens Y,K).
\]
So property~\eqref{e:ugly} is equivalent to the commutativity of the outer pentagon in the diagram
\[
\xymatrix{
  \Hom(Y\tens Z\tens X,K) \ar[d]_{\eqref{e:duality3}} \ar[rr]^{\eqref{e:duality3}} & & \Hom(D^2X\tens Y\tens Z,K) \ar[d]^{\eqref{e:duality3}} \\
  \Hom(D^2(Z\tens X)\tens Y,K) \ar[dr]_{(f_{Z\tens X}\tens\id_Y)^*} & & \Hom(D^2Z\tens D^2X\tens Y,K) \ar[ll]_{(u_{Z,X}\tens\id_Y)^*} \ar[dl]^{(f_Z\tens f_X\tens\id_Y)^*} \\
   & \Hom(Z\tens X\tens Y,K) &
   }
\]
In this diagram $u_{Z,X}:D^2(Z\tens X)\rar{\simeq} D^2Z\tens D^2 X$ is the isomorphism defining the
monoidal structure on $D^2$. By the definition of $u$ (which was given immediately before
Proposition~\ref{p:DD'}), the top square of the diagram commutes. So the commutativity of the outer
pentagon is equivalent to that of the bottom triangle. The latter is equivalent to $f$ being monoidal
(see Lemma~\ref{l:var_Yoneda}).
\qed

\subsection{Proof of Lemma \ref{l:equivalent-to-pretty1}}\label{ss:proof-l:equivalent-to-pretty1}
Let $\ga$ denote the functorial family of bijections
\[
\ga_{X,Y} : \Hom(X,DY) \rar{\simeq} \Hom(X\tens Y,K) \xrar{\ \ \psi_{X,Y}\ \ } \Hom(Y\tens X,K) \rar{\simeq} \Hom(Y,DX)
\]
where the first and third arrows come from \eqref{e:duality1}. Let $\vp$ be as in
Corollary~\ref{c:abstract}. Comparing Lemma~\ref{l:attempt2}(a) with
Lemma~\ref{l:pivotal2} we see that $\vp_X=f_{D^{-1}X}$.
So by Corollary~\ref{c:abstract},
the condition
\[
\psi_{X,Y}\circ \psi_{Y,X}=\id \quad\quad \forall\, X,Y\in\cM
\]
is equivalent to the condition
\[
f_{D^{-1}X}= D(f_{D^{-2}X})^{-1}  \quad\quad \forall\,X\in\cM \, .
\]
The latter condition holds if and only if $f_{DX}=D(f_X)^{-1}$ for all $X\in\cM$, \; Q.E.D. \qed

\subsection{Proof of Lemma~\ref{l:pivot2}}   \label{ss:proofpivot2}
The isomorphism $\psi_{\e,X}:\Hom (X,K)\rar{\simeq}\Hom (X,K)$ is functorial in $X\in\cM$, so
(b)$\Leftrightarrow$(c) by Yoneda's lemma. To prove that (a)$\Leftrightarrow$(c), it suffices to show that $\psi_{\e,K}=\ga\circ f_K$, where $\ga :D^2K\rar{\simeq}K$ is inverse to \eqref{e:D^2K}.
Diagram \eqref{e:fpsi} for $X=\e$ and $Y=K$ tells us that $\psi_{\e,K}=\ga'\circ f_K$, where
$\ga'\in\Hom (D^2K,K)$ is the image of $\id_K\in\Hom (\e\tens K, K)$ under the isomorphism
\eqref{e:duality3}. But $\ga'=\ga$ by Remark \ref{r:r}(v). \qed

\section{Proof of Proposition \ref{p:monoidal-iso-id-DD}}\label{s:proof-p:monoidal-iso-id-DD}
The idea of the proof is to use the relation between monoidal auto-equivalences and bimodule categories explained in \S\ref{ss:proof_D^2monoidal}.

\mbr

Just as in \S\ref{sss:Application}, we let $\cN$ denote the category of functors
$\cM^\circ\rar{}\sets$,  we write
$\cY:\cM\into\cN$ for the Yoneda embedding and put $n_0:=\cY(K)\in\cN$. We equip $\cN$ with
the $(\cM^\circ,\cM^\circ)$-bimodule structure from \S\ref{sss:Application}. For this structure, $(X\tens n_0)(Z)=\Hom(Z\tens X,K)$ and $(n_0\tens X)(Z)=\Hom(X\tens Z,K)$ for all
$X,Z\in\cM$. In \S\ref{sss:Application} we defined the monoidal structure on $D^2$ using
the isomorphism
\begin{equation}  \label{e:old_friend}
X\tens n_0\rar{\simeq}n_0\tens D^2X ,\quad\quad X\in\cM,
\end{equation}
which comes from \eqref{e:duality3}. On the other hand, the isomorphisms
\[
(\be^\pm_{X,Z})^* : \Hom(Z\tens X,K) \rar{\simeq} \Hom(X\tens Z,K),\quad\quad X,Z\in\cM,
\]
define isomorphisms
\begin{equation}  \label{e:f^pm}
f^\pm_X:X\tens n_0\rar{\simeq}n_0\tens X,\quad\quad X\in\cM \, .
\end{equation}
By \S\ref{sss:Categorification}, each of the isomorphisms \eqref{e:f^pm} defines a monoidal structure
$s^\pm$ on the identity functor $\Id_{\cM}$. By \S\ref{ss:isomorphism_monoidal}, we have a
canonical monoidal isomorphism $\alpha^{\pm}:(\Id_{\cM}\, ,s^\pm)\rar{\simeq}D^2$. Thus to prove
Proposition \ref{p:monoidal-iso-id-DD} it suffices to prove the following lemma.

\begin{lem}
\begin{enumerate}[$($i$)$]
\item The isomorphism $\alpha^{\pm}:\Id_{\cM}\rar{\simeq}D^2$ defined above equals the
isomorphism $\vt^{\pm}:\Id_{\cM}\rar{\simeq}D^2$ from Definition \ref{d:vartheta}.
 \sbr
\item The monoidal structure on $\Id_{\cM}$ induced by the isomorphisms
$f^+_X$ $($resp., $f^-_X${}$)$ is equal to the monoidal structure defined by
\eqref{e:square_of_braiding} $($resp., \eqref{e:inverse_JS}$)$.
\end{enumerate}
\end{lem}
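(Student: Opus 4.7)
I would prove both parts by unpacking the defining universal properties of $\alpha^\pm$ and $u_{X_1,X_2}$ from \S\ref{sss:Categorification}--\S\ref{ss:isomorphism_monoidal}, evaluating all natural transformations at an arbitrary test object $Z\in\cM$, and then applying Yoneda's lemma in the form of Lemma~\ref{l:var_Yoneda}.

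For part (i): the definition of $\alpha^\pm$ in \S\ref{ss:isomorphism_monoidal} makes it the unique natural isomorphism satisfying $(\id_{n_0}\tens\alpha^\pm_X)\circ f^\pm_X=f_X$, where $f_X:X\tens n_0\to n_0\tens D^2X$ is the isomorphism coming from \eqref{e:duality3}. Using the identifications $(X\tens n_0)(Z)=\Hom(Z\tens X,K)$, $(n_0\tens X)(Z)=\Hom(X\tens Z,K)$, and $(n_0\tens D^2X)(Z)=\Hom(D^2X\tens Z,K)$, this identity evaluated at $Z$ factors the instance $g_{Z,X}$ of \eqref{e:duality3} as $(\be^\pm_{X,Z})^*$ followed by the map on $\Hom$'s induced by $\alpha^\pm_X\tens\id_Z$. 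Rearranging, the induced map $\Hom(D^2X\tens Z,K)\to\Hom(X\tens Z,K)$ is precisely $g_{Z,X}^{-1}$ post-composed with $(\be^\pm_{X,Z})^*$, which is exactly the defining condition of $\vt^\pm_X$ in Definition~\ref{d:vartheta}. Lemma~\ref{l:var_Yoneda} then forces $\alpha^\pm_X=\vt^\pm_X$.

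For part (ii): the monoidal constraint $u_{X_1,X_2}$ defining $s^\pm$ is characterized by the commutativity of \eqref{e:Phi_monoidal} with $\Phi=\Id$ and $f=f^\pm$, that is, the identity
\[
(\id_{n_0}\tens u_{X_1,X_2})\circ f^\pm_{X_1\tens_\cA X_2}=(f^\pm_{X_1}\tens\id_{X_2})\circ(\id_{X_1}\tens f^\pm_{X_2})
\]
in $\cN$. Evaluating at $Z$ and unpacking the bimodule formulas, the left-hand side becomes pullback along $\be^\pm_{X_1\tens X_2,Z}\circ(u_{X_1,X_2}\tens\id_Z)$, while the right-hand side becomes pullback along an explicit composite built from $\be^\pm_{X_2,Z\tens X_1}$ and $\be^\pm_{X_1,\cdot}$ (with the requisite associators). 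Invoking the hexagon axioms to rewrite $\be^\pm_{X_1\tens X_2,Z}$ in terms of $\be^\pm_{X_1,Z}$ and $\be^\pm_{X_2,Z}$, the discrepancy that $u_{X_1,X_2}$ must absorb works out to be precisely the double braiding $\be_{X_2,X_1}\circ\be_{X_1,X_2}$ (for the sign $+$) or its inverse (for the sign $-$), applied to the $X_1\tens X_2$ tensor factor. Lemma~\ref{l:var_Yoneda} then forces $u_{X_1,X_2}$ to be this double braiding, matching the monoidal structure of $J_{\cM}$ given by \eqref{e:square_of_braiding} (resp. that of $J_{\cM}^{-1}$ given by \eqref{e:inverse_JS}).

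The chief obstacle here is not conceptual but bookkeeping: one must carefully track the conventions for the $(\cA,\cA)$-bimodule action on $\cN$, keeping $\tens_\cM$ and $\tens_\cA=\tens_{\cM^\circ}$ distinct and being consistent about left versus right actions of $\cA$ on $n_0$. Once the correct diagram chase is set up, both assertions reduce to the hexagon identities combined with the uniqueness furnished by Lemma~\ref{l:var_Yoneda}.
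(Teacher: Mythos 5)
Your proof takes essentially the same route as the paper's: for (i), you substitute $\vt^\pm_X$ into the diagram characterizing $\alpha^\pm_X$, evaluate on a test object, and recognize the defining condition of Definition~\ref{d:vartheta}, finishing with Lemma~\ref{l:var_Yoneda}; for (ii), you unpack \eqref{e:Phi_monoidal} into a braiding identity resolved by the hexagon axioms and naturality, which is exactly the content of the paper's diagram~\eqref{e:easy-diagram}. The only difference is one of exposition — the paper writes out the commuting square in (ii) explicitly, whereas you assert the outcome of that diagram chase without displaying it.
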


\begin{proof}
To prove (i), we have to show that for each $X\in\cM$, the diagram
\[
\xymatrix{
  X\tens n_0 \ar[rr]^{f^\pm_X} \ar[dr]_{\eqref{e:old_friend}}  & & n_0\tens X \ar[dl]^{\id_{n_0}\tens\vt^\pm_X} \\
   & n_0 \tens D^2(X) &
   }
 \]
commutes; here the bottom left arrow is the isomorphism \eqref{e:old_friend}. This is a diagram
of functors; evaluating them on a test object $Y\in\cM$, we get
the diagram
\[
\xymatrix{
  \Hom(Y\tens X,K) \ar[rr]^{(\be^\pm_{X,Y})^*} \ar[dr]_{\eqref{e:duality3}} & & \Hom(X\tens Y,K) \ar[dl]^{(\vt^\pm_X\tens\id_Y)^*} \\
   & \Hom(D^2X\tens Y,K) &
   }
\]
which commutes by Definition \ref{d:vartheta}.

\mbr

Statement (ii) of the lemma is equivalent to the following easy fact about braided monoidal categories:
if $X,Y,Z\in\cM$, then the square
\begin{equation}\label{e:easy-diagram}
\xymatrix{
 X\tens Y\tens Z \ar[rr]^{\be_{X,Y\tens Z}} \ar[d]_{(\be_{Y,X}\be_{X,Y})\tens\id_Z} & & Y\tens Z\tens X \ar[d]^{\be_{Y,Z\tens X}} \\
 X\tens Y\tens Z \ar[rr]^{\be_{X\tens Y,Z}} & & Z\tens X\tens Y
   }
\end{equation}
commutes. To verify this fact, note that by the hexagon axiom, the diagram
\[
\xymatrix{
 X\tens Y\tens Z \ar[rr]^{\be_{X,Y\tens Z}} \ar[dr]_{\be_{X,Y}\tens\id_Z} & & Y\tens Z\tens X \ar[dr]_{\be_{Y,Z}\tens\id_X} \ar[rr]^{\be_{Y,Z\tens X}} & & Z\tens X\tens Y \\
  & Y\tens X\tens Z \ar[ur]_{\id_Y\tens\be_{X,Z}} \ar[rr]_{\be_{Y\tens X,Z}} & & Z\tens Y\tens X \ar[ur]_{\id_Z\tens\be_{Y,X}} &
   }
\]
commutes. Moreover, the functoriality of $\be$ implies that
\[
(\id_Z\tens\be_{Y,X})\circ\be_{Y\tens X,Z} = \be_{X\tens Y,Z}\circ(\be_{Y,X}\tens\id_Z),
\]
which implies that \eqref{e:easy-diagram} commutes and proves statement (ii).
\end{proof}

\section{From \GV categories to r-categories}   \label{s:GV_to_r}
In  \S\S\ref{ss:M-abstract}--\ref{ss:proof-p:inverse-construction}  we prove Proposition \ref{p:inverse-construction}. In \S\ref{ss:not_automatic} we give an example showing that
in Proposition~\ref{p:inverse-construction}  the condition $Df=f$ does not hold automatically.

\mbr

To prove Proposition~\ref{p:inverse-construction}, we provide a right inverse for the construction from \S\ref{ss:hecke-subcategory-r-category}. Namely, given a \GV category $(\cM',K')$ and
a morphism $f:K'\rar{}\e'$ such that $Df=f$, we construct an r-category $\cM$ and a closed idempotent $e\in\cM$ (this is done in two steps: in \S\ref{ss:M-abstract} we construct $\cM$ as an abstract category, and in \S\ref{ss:M-monoidal}  we define the monoidal structure on $\cM$).
Then we show in Lemmas~\ref{l:hecke-obvious}-\ref{l:rig} that the pair $(\cM , e)$ has the properties required in Proposition~\ref{p:inverse-construction} (in particular, we prove that the monoidal category $\cM$ is an r-category).

\subsection{$\cM$ as an abstract category}\label{ss:M-abstract}
In this subsection we work with abstract categories rather than monoidal ones.
For convenience, we preserve the same notation as above: namely, we consider a category $\cM'$ together with objects $K',\e'\in\cM'$ and a morphism $f:K'\rar{}\e'$. However, we make no assumptions about $K',\e',f$.

\begin{defin}\label{d:abstract-extension}
Let $(\cM,\iota,\e,\de,\pi)$ be the universal\footnote{I.e., initial as an object of the (1-)category of all such data, with the obvious notion of morphism.} datum consisting of a category $\cM$, a functor $\iota:\cM'\rar{}\cM$ and a commutative triangle
\[
\xymatrix{
 & \e \ar[dr]^{\pi} & \\
 \iota(K') \ar[ur]^{\de} \ar[rr]_{\iota(f)} & & \iota(\e')
   }
\]
in $\cM$ (note that $\cM$ is determined uniquely up to isomorphism of categories).
\end{defin}

\begin{lem}\label{l:abstract-extension}
Let $(\cM,\iota,\e,\de,\pi)$ be as in the definition above. Then
\begin{enumerate}[$($i$)$]
\item $\iota$ is fully faithful;
 \sbr
\item for each $X\in\cM'$, the maps
\[
\Hom_{\cM}(\iota(X),\iota(K'))\rar{}\Hom_{\cM}(\iota(X),\e),   \quad g\mapsto \delta\circ g,
\]
and
\[
\Hom_{\cM}(\iota(\e'),\iota(X))\rar{}\Hom_{\cM}(\e,\iota(X)),    \quad g\mapsto g\circ\pi,
\]
 are bijective.
\sbr
\item the map
\[
\Hom (\iota(K'),\iota(\e'))\rar{}\Hom (\e , \e ), \quad g\mapsto \delta\circ g\circ\pi
\]
is injective with image $\Hom (\e ,\e )\setminus \{\id_{\e} \}$.
\end{enumerate}
\end{lem}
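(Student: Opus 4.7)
My plan is to construct $\cM$ concretely and then read (i)--(iii) off the construction. Objects of $\cM$ would be those of $\cM'$ together with one new symbol $\e$; I would declare
\begin{align*}
\Hom_{\cM}(\iota X,\iota Y) &:=\Hom_{\cM'}(X,Y), \\
\Hom_{\cM}(\iota X,\e) &:=\Hom_{\cM'}(X,K'), \\
\Hom_{\cM}(\e,\iota Y) &:=\Hom_{\cM'}(\e',Y), \\
\Hom_{\cM}(\e,\e) &:=\{\ast\}\sqcup\Hom_{\cM'}(\e',K'),
\end{align*}
where $\ast$ is to play the role of $\id_{\e}$. Let $\delta\in\Hom_{\cM}(\iota K',\e)$ and $\pi\in\Hom_{\cM}(\e,\iota\e')$ correspond to $\id_{K'}$ and $\id_{\e'}$ under these identifications.

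Next I would define composition by a short case analysis keyed to how many of the three endpoints are $\e$: compositions not involving $\e$ use composition in $\cM'$; compositions involving $\delta$ on the left or $\pi$ on the right at one of the outer endpoints are the obvious compositions in $\cM'$ under the identifications above; the key declaration $\pi\circ\delta=\iota(f)$ handles compositions crossing $\e$ once; a composition $\e\to\iota Y\to\e$ coming from $h:\e'\to Y$ followed by $g:Y\to K'$ is set to equal $g\circ h\in\Hom_{\cM'}(\e',K')$; and two non-identity endomorphisms of $\e$ represented by $h_1,h_2:\e'\to K'$ compose to $h_1\circ f\circ h_2\in\Hom_{\cM'}(\e',K')$. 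Associativity is verified case by case, and in every case reduces to associativity in $\cM'$ after inserting one copy of $f$ for each intermediate passage through $\e$.

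Then I would verify the universal property of Definition~\ref{d:abstract-extension}: given any target datum $(\cC,\iota',\e_{\cC},\delta',\pi')$ with $\pi'\circ\delta'=\iota'(f)$, a functor $F:\cM\to\cC$ sending $\iota\mapsto\iota'$, $\e\mapsto\e_{\cC}$, $\delta\mapsto\delta'$, $\pi\mapsto\pi'$, $\ast\mapsto\id_{\e_{\cC}}$ is uniquely determined and well-defined by the same prepending/appending recipe used to describe morphisms of $\cM$; functoriality is precisely the imposed relation $\pi'\circ\delta'=\iota'(f)$.

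Once this machinery is in place, (i) is the defining identity $\Hom_{\cM}(\iota X,\iota Y)=\Hom_{\cM'}(X,Y)$; (ii) holds because the composition rules identify $g\mapsto\delta\circ g$ and $g\mapsto g\circ\pi$ with the identity maps on $\Hom_{\cM'}(X,K')$ and $\Hom_{\cM'}(\e',X)$ respectively; and (iii) follows by noting that $\delta\circ g\circ\pi$ forces $g\in\Hom_{\cM}(\iota\e',\iota K')\cong\Hom_{\cM'}(\e',K')$ in order for the composition to be defined, so that for $g$ represented by $h:\e'\to K'$ the output is the element of $\Hom_{\cM}(\e,\e)\setminus\{\ast\}$ labelled by $h$, giving the desired bijection onto $\Hom(\e,\e)\setminus\{\id_{\e}\}$. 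I expect the main obstacle to be the bookkeeping needed for the associativity case analysis; the decisive design choice is the explicit disjoint union in $\Hom_{\cM}(\e,\e)$, which ensures that $\id_{\e}$ is not identified with any $\delta\circ\iota(h)\circ\pi$ and is precisely what makes the image description in (iii) sharp.
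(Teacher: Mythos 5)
Your construction is the same one the paper uses: objects are $Ob(\cM')\sqcup\{\e\}$, hom-sets are given by the same four formulas (with the crucial extra disjoint point $\ast=\id_\e$ in $\Hom(\e,\e)$), composition is ``insert one $f$ for each intermediate passage through $\e$,'' and (i)--(iii) are read off directly, with the universal property verified by the prepending/appending argument. The only difference is presentational: the paper packages the composition rule uniformly via auxiliary maps $\Phi,\Psi:Ob(\cM)\to Ob(\cM')$ and a family $f_X:\Psi(X)\to\Phi(X)$, setting $v\tilde\circ u=v\circ f_Y\circ u$, which makes associativity a one-line computation rather than the case-by-case verification you anticipate; this is a convenience, not a conceptual change. (You also correctly noted that for $\delta\circ g\circ\pi$ to typecheck one needs $g\in\Hom(\iota\e',\iota K')$, which corrects an apparent slip in the statement of (iii).)
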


\begin{proof}
We first construct a datum  $(\cM,\iota,\e,\de,\pi)$ for which properties (i)-(iii) are obvious
and then check that this datum is universal.

\mbr

The class of objects $Ob(\cM)$ is defined to be
the disjoint union of $Ob(\cM')$ and a one-element set $\{\e\}$.
Define maps $\Phi,\Psi:Ob (\cM)\rar{}Ob(\cM')$ as follows:
\[
\Phi(\e)=\e', \qquad \Psi(\e)=K', \qquad \Phi(X)=\Psi(X)=X\quad\forall\,X\in Ob(\cM').
\]
For all $X\in Ob (\cM)$ let $f_X:\Psi(X)\rar{}\Phi(X)$ be the morphism in $\cM'$ given by
\[
f_{\e}=f, \qquad f_X=\id_X \quad\text{for}\quad X\in Ob(\cM').
\]

\mbr
For $X,Y\in Ob (\cM)$, set $\operatorname{hom}(X,Y)=\Hom_{\cM'}(\Phi(X),\Psi(Y))$. Given
$X,Y,Z\in Ob (\cM)$ and $u\in\operatorname{hom}(X,Y)=\Hom_{\cM'}(\Phi(X),\Psi(Y))$ and $v\in\operatorname{hom}(Y,Z)=\Hom_{\cM'}(\Phi(Y),\Psi(Z))$, set $v\tilde{\circ}u:=v\circ f_Y\circ v\in\operatorname{hom}(X,Z)=\Hom_{\cM'}(\Phi(X),\Psi(Z))$. It is evident that $\tilde{\circ}$ defines an associative composition operation on the sets $\operatorname{hom}(X,Y)$.

\mbr

Now add to $\cM$ one more morphism $\e\rar{}\e$, denoted by $\id_{\e}$, and extend the operation $\tilde{\circ}$ by setting $\id_{\e}\tilde{\circ} u=u$ and $v\tilde{\circ}\id_{\e}=v$ whenever these compositions make sense. Then $\cM$ becomes a category. By construction, $\cM'$ is a full subcategory of $\cM$. Let $\iota:\cM'\into\cM$ be the inclusion functor, let
$\delta\in\operatorname{hom}(K',\e)$ correspond to $\id_{K'}\in\Hom_{\cM'}(K',K')$, and let $\pi\in\operatorname{hom}(\e,\e')$ correspond to $\id_{\e'}\in\Hom_{\cM'}(\e',\e')$.

\mbr

The datum $(\cM,\iota,\e,\de,\pi)$ clearly has properties (i)-(iii). It remains to check that this datum
is universal, i.e., given another datum  $(\overline{\cM},\bar\iota,\bar\e,\bar\de,\bar\pi)$ there is a
unique functor $F:\cM\rar{}\overline{\cM}$ such that
\begin{equation}  \label{e:dano}
F\bigl\lvert_{\cM'}=\bar\iota, \quad F(\e )=\bar\e ,
\end{equation}
\[
F(\de)=\bar\de, \quad F(\pi)=\bar\pi .
\]
If such $F$ exists then one should have
\begin{equation}  \label{e:shouldhave1}
F(\delta \tilde{\circ} g)=\bar\delta\circ\bar\iota (g) \quad\quad\forall g\in\Hom (X,K'), \;X\in\cM' ;
\end{equation}
\begin{equation}  \label{e:shouldhave2}
F( g\tilde{\circ} \pi )=\bar\iota (g)\circ\bar\pi \quad\quad\forall g\in\Hom (\e ,X), \;X\in\cM' ;
\end{equation}
\begin{equation}  \label{e:shouldhave3}
F( \delta \tilde{\circ}g\tilde{\circ} \pi )=\bar\delta\circ\bar\iota (g)\circ\bar\pi
\quad\quad\forall g\in\Hom (K',\e).
\end{equation}

Since $\cM$ has properties (i)-(iii) the action of $F$ on objects and morphisms is uniquely determined by  \eqref{e:dano} and \eqref{e:shouldhave1}-\eqref{e:shouldhave3}.
It is easy to check that the action of $F$ on morphisms
defined by \eqref{e:dano}-\eqref{e:shouldhave3} agrees with composition.
\end{proof}

\subsection{$\cM$ as a monoidal category}\label{ss:M-monoidal}
In this subsection we assume that $\cM'$ is a monoidal category with unit object $\e'$, and that $(K',f)$ is a pair consisting of an object $K'\in\cM'$ and a morphism $f:K'\rar{}\e'$. Let $(\cM,\iota,\e,\de,\pi)$ be as in Definition \ref{d:abstract-extension}.
By Lemma \ref{l:abstract-extension}, $\iota$ is fully faithful, so we will view $\cM'$ as a full subcategory of $\cM$ and
omit the symbol $\iota$ from now on.

\begin{lem}\label{l:monoidal-extension}
Suppose that the following diagram commutes:
\begin{equation}\label{e:diag}
\xymatrix{
 & \e'\tens K' \ar[dr]^{\simeq} & \\
 K'\tens K' \ar[ur]^{f\tens\id_{K'}} \ar[dr]_{\id_{K'}\tens f} & & K' \\
 & K'\tens\e' \ar[ur]_{\simeq} &
   }
\end{equation}
Then there is a unique way to extend the monoidal structure $\tens:\cM'\times\cM'\rar{}\cM'$ to a bifunctor $\tens:\cM\times\cM\rar{}\cM$ so that the following properties are satisfied:
 \sbr
\begin{enumerate}[$($1$)$]
\item the functors $X\longmapsto\e\tens X$ and $X\longmapsto X\tens\e$ are equal to the identity functor on $\cM$;
 \sbr
\item for each $X\in\cM'$, the morphisms
\[
X=\e\tens X \xrar{\ \ \pi\tens\id_X\ \ } \e'\tens X
\quad\text{and}\quad X=X\tens\e\xrar{\ \ \id_X\tens\pi\ \ }X\tens\e'
\]
are equal to the isomorphisms $X\rar{\simeq}\e'\tens X$ and $X\rar{\simeq}X\tens\e'$ that come from the structure of unit object on $\e'\in\cM'$.
\end{enumerate}
 \sbr
Furthermore, there is a unique way to extend the associativity constraint for $\tens$ on $\cM'$ to an associativity constraint for the bifunctor $\tens$ on $\cM$ so that $(\cM,\tens,\e)$ becomes a monoidal category with trivial unit constraints.
\end{lem}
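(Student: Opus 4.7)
The proof plan rests on the explicit parametrization of morphisms in $\cM$ provided by Lemma~\ref{l:abstract-extension}: every morphism in $\cM$ is either (i) a morphism between two objects of $\cM'$, (ii) the identity $\id_\e$, (iii) of the form $\de\circ g$ for a unique $g\in\Hom_{\cM'}(X,K')$, (iv) of the form $g\circ\pi$ for a unique $g\in\Hom_{\cM'}(\e',Y)$, or (v) of the form $\de\circ g\circ\pi$ for a unique $g\in\Hom_{\cM'}(\e',K')$. Specifying $\tens$ on all pairs of morphisms therefore reduces, assuming bifunctoriality, to specifying the ``atomic'' tensor products involving $\id_\e$, $\pi$, and $\de$.

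First I would pin down these atomic tensor products. Condition~(1) forces $\id_\e\tens u=u=u\tens\id_\e$ for every morphism $u$ of $\cM$. Condition~(2) forces $\pi\tens\id_X$ to equal the unit isomorphism $X\rar{\simeq}\e'\tens X$ for every $X\in\cM'$, and symmetrically on the right. Combining $\pi\circ\de=f$ with bifunctoriality and condition~(1) then forces $\de\tens\id_X$ to be the composite of $f\tens\id_X:K'\tens X\rar{}\e'\tens X$ with the inverse of the unit isomorphism $X\rar{\simeq}\e'\tens X$, and similarly for $\id_X\tens\de$. The one nontrivial consistency check concerns $\de\tens\de:K'\tens K'\rar{}\e\tens\e=\e$: bifunctoriality yields two \emph{a priori} different formulas $\de\circ(\id_{K'}\tens\de)$ and $\de\circ(\de\tens\id_{K'})$. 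Since $\de$ is a monomorphism by Lemma~\ref{l:abstract-extension}(ii) (applied with $X=K'$), their coincidence is equivalent to $\id_{K'}\tens\de=\de\tens\id_{K'}$ as morphisms $K'\tens K'\rar{}K'$, which upon unwinding the previous formulas is precisely the commutativity of diagram~\eqref{e:diag}.

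With this single consistency check dispatched, I would define $u\tens v$ on general morphisms by decomposing $u$ and $v$ into normal form and combining the atomic tensor products with the bifunctor of $\cM'$. A methodical but routine case analysis shows that this prescription is well-defined and bifunctorial; its only substantive ingredients are conditions~(1) and~(2), the identity $\pi\circ\de=f$, the naturality of the unit constraints in $\cM'$, and (in one place) hypothesis~\eqref{e:diag}. Uniqueness of the extension is automatic, since every formula was forced.

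For the second assertion, I would declare the associativity constraint at $(X,Y,Z)$ to equal the associativity constraint of $\cM'$ when $X,Y,Z\in\cM'$ and to equal the identity whenever any one of $X,Y,Z$ equals $\e$. Triviality of the unit constraints forces this choice. Naturality of the associator reduces, via the normal form of morphisms, to naturality of the $\cM'$ associator together with a short list of elementary identities involving $\pi$, $\de$, and $f$. The pentagon axiom is inherited from $\cM'$ when all four objects lie in $\cM'$ and becomes a tautology as soon as any one equals $\e$; the triangle axiom is a tautology because the unit constraints are trivial. The main obstacle, as with most arguments of this flavor, is simply organizing the combinatorial case analysis cleanly; the only genuinely nontrivial hypothesis to be consumed is~\eqref{e:diag}, used exactly once, in the consistency check for $\de\tens\de$.
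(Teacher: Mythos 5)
Your proof is correct and takes essentially the same route as the paper's. The paper constructs the partial functors $X\tens(-)$ and $(-)\tens Y$ directly from the universal property of $\cM$ (Definition~\ref{d:abstract-extension}) and then checks the interchange law when $u,v$ range over the standard generators, noting that the only nontrivial case is $u=v=\de$, which is precisely~\eqref{e:diag}; you arrive at the same reduction by working with the explicit parametrization of morphisms from Lemma~\ref{l:abstract-extension} and pinning down the atomic products $\id_\e\tens(-)$, $\pi\tens(-)$, $\de\tens(-)$, with the single consistency check for $\de\tens\de$ again boiling down to~\eqref{e:diag}. The two proofs are the same argument in slightly different clothing, and your handling of the associator and uniqueness matches the paper's implicit reasoning.
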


\begin{proof}
The universal property of $\cM$ implies that there is a unique way to define $X\tens Y$ as a functor of $Y\in\cM$ for a fixed $X\in\cM$, and as a functor of $X\in\cM$ for a fixed $Y\in\cM$. It remains to check the commutativity of the diagram
\[
\xymatrix{
  X_1\tens Y_1 \ar[rr]^{\id_{X_1}\tens v} \ar[d]_{u\tens\id_{Y_1}} & & X_1\tens Y_2 \ar[d]^{u\tens\id_{Y_2}} \\
  X_2\tens Y_1 \ar[rr]^{\id_{X_2}\tens v} & & X_2\tens Y_2
   }
\]
for all objects $X_1,X_2,Y_1,Y_2\in\cM$ and morphisms $u:X_1\rar{}X_2$ and $v:Y_1\rar{}Y_2$ in $\cM$. In fact, it suffices to do this when each of $u$ and $v$ is one of the ``standard generators'' of $\cM$ (i.e., is either a morphism in $\cM'$, or $\de:K'\rar{}\e$, or $\pi:\e\rar{}\e'$). The only nontrivial case is where $u=v=\de$ (so that $X_1=Y_1=K'$ and $X_2=Y_2=\e$). Here we are reduced precisely to the commutativity of \eqref{e:diag}.
\end{proof}

The next result is obvious from the construction.

\begin{lem}\label{l:hecke-obvious}
If the assumptions of Lemma \ref{l:monoidal-extension} are satisfied and $\cM$ is equipped with the monoidal category structure described in the lemma, then $\pi$ is an idempotent arrow in $\cM$ and $\cM'$ is identified with the Hecke subcategory $\e'\cM\e'\subset\cM$.  \hfill\qedsymbol
\end{lem}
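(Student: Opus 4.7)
The lemma makes two assertions, and both follow essentially tautologically from the construction of $\cM$ in Lemmas~\ref{l:abstract-extension}--\ref{l:monoidal-extension}. I would handle them separately.

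First I would verify that $\pi: \e \to \e'$ is an idempotent arrow by checking that $\pi \tens \id_{\e'}$ and $\id_{\e'} \tens \pi$ are isomorphisms. By property~(1) of Lemma~\ref{l:monoidal-extension} the source objects $\e \tens \e'$ and $\e' \tens \e$ both equal $\e'$ on the nose, and by property~(2) the two morphisms in question become, respectively, the left and right unit isomorphisms $\e' \rar{\simeq} \e' \tens \e'$ inherited from the monoidal structure of $\cM'$. Both are invertible.

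Next I would identify $\cM'$ with the Hecke subcategory $\e'\cM\e'$ by verifying that the fully faithful inclusion $\cM' \hookrightarrow \cM$ factors through $\e'\cM\e'$ and is essentially surjective onto it. The factoring is immediate: any $X \in \cM'$ is isomorphic to $\e' \tens X \tens \e'$ via the unit constraints in $\cM'$, so $X \in \e'\cM\e'$. For essential surjectivity, suppose $X \in \e'\cM\e'$, so $X \cong \e' \tens Y \tens \e'$ in $\cM$ for some $Y \in \cM$. By the construction in Lemma~\ref{l:abstract-extension}, the class of objects of $\cM$ is the disjoint union of the class of objects of $\cM'$ and the singleton $\{\e\}$. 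So either $Y$ already belongs to $\cM'$, in which case $\e' \tens Y \tens \e'$ lies in $\cM'$ because the extended bifunctor restricts to the original tensor product on $\cM' \times \cM'$; or $Y = \e$, in which case property~(1) of Lemma~\ref{l:monoidal-extension} gives $\e' \tens Y \tens \e' = \e' \tens \e' \in \cM'$. Either way, $X$ is isomorphic in $\cM$ to an object of $\cM'$.

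I do not anticipate any genuine obstacle: the proof is pure bookkeeping. The only point that requires momentary care is distinguishing the original monoidal structure on $\cM'$ from its extension to $\cM$, but properties~(1)--(2) of Lemma~\ref{l:monoidal-extension} are tailored precisely to make these agree in the relevant cases.
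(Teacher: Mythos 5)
Your proof is correct and is exactly the routine verification that the paper deems ``obvious from the construction'' (the paper gives no written proof). One small refinement worth noting: to get the literal equality $\cM' = \e'\cM\e'$ of full subcategories of $\cM$ (rather than just an equivalence), you would also observe that $\e$ is not isomorphic in $\cM$ to any object of $\cM'$ — this follows since the identity $\id_\e$ was adjoined freely and cannot factor through an object of $\cM'$ — so that the isomorphism-closed subcategory $\e'\cM\e'$ contains no object outside $\cM'$; but your argument already establishes the identification in the sense intended.
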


\begin{rem}     \label{r:univ_property}
The \emph{monoidal} category $\cM$ together with morphisms $K'\rar{\de}\e\rar{\pi}\e'$, constructed above, can also be characterized by a universal property. Namely, suppose $\cN$ is a monoidal category, $\varpi:\e_{\cN}\rar{}e$ is an idempotent arrow, $F:\cM'\rar{}e\cN e$ is a monoidal functor and $\xi:F(K')\rar{}\e_{\cN}$ is a morphism such that the triangle
\[
\xymatrix{
  & \e_{\cN} \ar[drr]^{\varpi} & & \\
  F(K') \ar[ur]^\xi \ar[rr]^{F(f)} & & F(\e') \ar[r]^{\simeq} &  e
   }
\]
commutes. Then $F$ admits a unique extension to a monoidal functor $\cM\rar{}\cN$ such that $\de\mapsto\xi$ and $\pi\mapsto\varpi$.
\end{rem}

\subsection{Proof of Proposition \ref{p:inverse-construction}}\label{ss:proof-p:inverse-construction}
In this subsection we assume that all the hypotheses of Proposition \ref{p:inverse-construction} are satisfied. Let $(\cM,\e,\de,\pi)$ be as in \S\ref{ss:M-monoidal}. The assumption that $Df=f$ is equivalent to the commutativity of the diagram \eqref{e:diag}, and we equip $\cM$ with the monoidal structure described in Lemma \ref{l:monoidal-extension}. In view of Lemma \ref{l:hecke-obvious}, the proof of Proposition \ref{p:inverse-construction} will be complete once we establish

\begin{lem}     \label{l:rig}
Let $D$ denote the unique extension of the duality functor $D:\cM'\rar{\sim}\cM'$ to a contravariant functor $D:\cM\rar{}\cM$ determined by $D(\e)=\e$, $D(\de)=\pi$, $D(\pi)=\de$. Then $D:\cM\rar{}\cM$ is an anti-autoequivalence and there are functorial isomorphisms $\Hom(X\tens Y,\e)\rar{\simeq}\Hom(X,DY)$ for all $X,Y\in\cM$; in particular, $\cM$ is an r-category.
\end{lem}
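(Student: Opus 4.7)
The plan is to exploit the universal property of $\cM$ from Definition~\ref{d:abstract-extension} together with the detailed description of morphisms provided by Lemma~\ref{l:abstract-extension}, and then reduce all claims to the corresponding facts inside $\cM'$.

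First I would verify that $D:\cM\rar{}\cM$ is a well-defined contravariant functor. To make sense of the formulas $D(\de)=\pi$ and $D(\pi)=\de$ one uses the canonical identifications $D(\e')=K'$ (by definition of a dualizing object) and $D(K')\simeq\e'$ from \eqref{e:D^2(1)}. The only nontrivial relation among the three generating classes of morphisms (morphisms in $\cM'$, the arrow $\de$, and the arrow $\pi$) is $\pi\circ\de=f$. Applying the prescription for $D$ gives $D(\de)\circ D(\pi)=\pi\circ\de=f$, which we need to equal $D(f)$. This is precisely where the hypothesis $Df=f$ is used: it guarantees that $D$ is functorial.

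Next I would show that $D$ is an anti-autoequivalence. Essential surjectivity is immediate ($\e=D\e$, and $X\cong D(D^{-1}X)$ for $X\in\cM'\subset\cM$). For full faithfulness I would split into the four cases determined by whether each of $X$ and $Y$ lies in $\cM'$ or equals $\e$. The case $X,Y\in\cM'$ is the full faithfulness of the original $D$ on $\cM'$; in each of the remaining three cases, Lemma~\ref{l:abstract-extension}(ii)--(iii) identifies the relevant Hom-sets in $\cM$ with Hom-sets in $\cM'$, under which the action of the extended $D$ reduces to the action of the original $D$ on $\cM'$ up to pre- or post-composition with $\de$ or $\pi$.

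Finally, for the required Hom-isomorphism I would again case-split on whether $X$ or $Y$ equals $\e$. The case $Y=\e$ is trivial since both sides become $\Hom_\cM(X,\e)$. When $X,Y\in\cM'$ the tensor product $X\tens Y$ lies in $\cM'$, and one combines Lemma~\ref{l:abstract-extension}(ii) with the GV duality $\Hom_{\cM'}(X\tens Y,K')\rar{\simeq}\Hom_{\cM'}(X,DY)$. When $X=\e$ and $Y\in\cM'$, both $\Hom_\cM(Y,\e)$ and $\Hom_\cM(\e,DY)$ identify, via Lemma~\ref{l:abstract-extension}(ii), with Hom-sets in $\cM'$ that are related by GV duality on $\cM'$ combined with the unit constraint (using $\e'\tens Y=Y$). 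The main obstacle I anticipate is not producing these bijections case by case, but verifying that the resulting family is functorial in $X,Y$ \emph{over all of} $\cM$, including morphisms involving $\e$, $\de$, and $\pi$. The key tool should again be Lemma~\ref{l:abstract-extension} together with $Df=f$: every morphism in $\cM$ factors through $\cM'$-morphisms and the arrows $\de$, $\pi$, so functoriality reduces to compatibility on these generators, a direct if somewhat tedious computation using the construction in \S\ref{ss:M-monoidal}.
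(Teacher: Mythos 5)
Your proposal is correct in outline, but it takes a genuinely different route from the paper's proof, and the paper's route is cleaner.

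For the anti-autoequivalence, you propose essential surjectivity plus a full-faithfulness check in four cases depending on whether $X,Y$ lie in $\cM'$ or equal $\e$. The paper instead exhibits a quasi-inverse: the unique contravariant extension of $D^{-1}$ to $\cM$ sending $\e\mapsto\e$, $\de\mapsto\pi$, $\pi\mapsto\de$. The two compositions with $D$ restrict to functors isomorphic to $\Id_{\cM'}$ and fix $\e,\de,\pi$, so by the uniqueness half of the universal property of Definition~\ref{d:abstract-extension} they are isomorphic to $\Id_\cM$. This dispenses with the case analysis entirely.

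For the Hom-isomorphism, you propose building the bijections case by case and then verifying naturality on generating morphisms, which you correctly flag as the delicate step. The paper instead looks at the bifunctors $F_1(X,Y)=\Hom(X\tens Y,\e)$ and $F_2(X,Y)=\Hom(X,DY)$ on $\cM\times\cM$, notes that the restriction $F_1|_{\cM'\times\cM'}\cong F_2|_{\cM'\times\cM'}$ (via $\de$ and GV duality), and asserts that this isomorphism of restricted bifunctors extends \emph{uniquely} to $F_1\rar{\simeq}F_2$ once one prescribes its values on the slices $X=\e$ and $Y=\e$ (the identity when $Y=\e$, the map induced by $D$ when $X=\e$). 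The functoriality check you anticipate is thereby absorbed into a uniqueness-of-extension argument governed by Lemma~\ref{l:abstract-extension}.

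One point of phrasing worth noting: your statement that ``the only nontrivial relation \ldots\ is $\pi\circ\de=f$'' is better framed as follows. The universal property of $\cM$ applied with target $\cM^{op}$ reduces the existence of the contravariant extension $D:\cM\to\cM$ to exhibiting a commutative triangle in $\cM^{op}$ factoring $Df:D(K')\to D(\e')$ through an object; the triangle $K'\xrightarrow{\de}\e\xrightarrow{\pi}\e'$ does the job in $\cM$ precisely because $\pi\circ\de=f$ equals $Df$. This is the same computation you perform, but anchored cleanly in the universal property rather than in an ad hoc generators-and-relations picture of $\cM$ that the paper never actually sets up.
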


\begin{proof}
The unique extension of $D^{-1}:\cM'\rar{\sim}\cM'$ to a contravariant functor $\cM\rar{}\cM$ determined by $\e\mapsto\e$, $\de\mapsto\pi$, $\pi\mapsto\de$ is quasi-inverse to $D:\cM\rar{}\cM$; thus $D$ is an anti-autoequivalence of $\cM$.

\mbr

Next, consider the contravariant functors $F_1(X,Y)=\Hom(X\tens Y,\e)$ and $F_2(X,Y)=\Hom(X,DY)$ from $\cM\times\cM$ to the category of sets. By assumption, we have an isomorphism $F_1\bigl\lvert_{\cM'\times\cM'}\rar{\simeq}F_2\bigl\lvert_{\cM'\times\cM'}$, since by construction, $\de:K'\rar{}\e$ identifies $F_1\bigl\lvert_{\cM'\times\cM'}$ with the functor $(X,Y)\mapsto\Hom(X\tens Y,K')$. It is easy to check that this isomorphism extends to a unique isomorphism $F_1\rar{\simeq}F_2$ such that when $Y=\e$, the induced map
\[
\Hom(X,\e)=\Hom(X\tens\e,\e) \rar{\simeq} \Hom(X,D\e) = \Hom(X,\e)
\]
equals the identity, and when $X=\e$, the induced map
\[
\Hom(Y,\e)=\Hom(\e\tens Y,\e) \rar{\simeq} \Hom(\e,DY)
\]
equals $D:\Hom(Y,\e)\rar{}\Hom(D\e,DY)=\Hom(\e,DY)$. This proves the lemma.
\end{proof}

\subsection{An example}    \label{ss:not_automatic}
We will show that in Proposition~\ref{p:inverse-construction}  the condition $Df=f$ does not
hold automatically.

\mbr

Let $k$ be a field, $X=\bigl\{ (x,y)\in\bA^2_k \st xy=0 \bigr\}$, and let $\cM'=\sD(X)=D^b_c(X,\ql)$ be the bounded derived category of constructible $\ell$-adic complexes on $X$ equipped with the usual (derived) tensor product $\tens$. Put $K'=K_X[-2]$, where $K_X$ is the dualizing complex of $X$. Then $(\cM',K')$ is a \GV category, and we claim that there exists a morphism $f:K'\rar{}\e'$ such that $Df\neq f$.

\begin{proof}
Since $\cM'$, equipped with the standard symmetry isomorphisms $M\tens N\rar{\simeq}N\tens M$, is a symmetric monoidal category, it follows that a morphism $f:K'\rar{}\e'$ satisfies $Df=f$ if and only if the corresponding morphism $K'\tens K'\rar{}K'$ is symmetric. Thus we need to check that $\Hom(\bigwedge^2 K',K')\neq 0$. We have $\Hom(\bigwedge^2 K',K')=\Hom\bigl( (\bigwedge^2 K_X)[-2],K_X \bigr)$. Since $\bigwedge^2 K_X$ is concentrated at the singular point $0\in X$, one has
\[
\Hom\bigl( (\bigwedge^2 K_X)[-2],K_X \bigr) = \Hom \bigl( \bigwedge^2(K_X)_0[-2], \ql \bigr),
\]
where $(K_X)_0$ is the stalk of $K_X$ at $0$. But $H^{-1}\bigl((K_X)_0\bigr)=\ql$ and $H^i\bigl((K_X)_0\bigr)=0$ for $i\geq 0$, so
$\Hom \bigl( \bigwedge^2(K_X)_0[-2], \ql \bigr) = \Hom( Sym^2\ql,\ql )=\ql$.
\end{proof}


\end{document}